\newcommand{\Dh}{\Delta_h}
\newcommand{\nabh}{\nabla_{\! h}}
\newcommand{\g}{\mbox{\boldmath$g$}}
\newcommand{\hf}{\nicefrac{1}{2}}
\newcommand{\nrm}[1]{\left\| #1 \right\|}
\newcommand{\cipgen}[3]{\left\langle #1 , #2 \right\rangle_{#3}}
\newcommand\dt {{\Delta t}}
\newcommand{\eipx}[2]{\left[ #1 , #2 \right]_{\rm x}}
\newcommand{\eipy}[2]{\left[ #1 , #2 \right]_{\rm y}}
\newcommand{\hh}{\mbox{\boldmath$h$}}
\newcommand{\HH}{\mbox{\boldmath$H$}}
\begin{document}
 \title{A positivity-preserving, second-order energy stable and convergent numerical scheme for a ternary system of macromolecular microsphere composite hydrogels \thanks{Received date, and accepted date (The correct dates will be entered by the editor).}}

\author{Lixiu Dong\thanks{Department of Mathematics, Faculty of Arts and Sciences, Beijing Normal University, Zhuhai 519087, P.R. China, (lxdong@bnu.edu.cn). }
\and Cheng Wang\thanks{Department of Mathematics; University of Massachusetts; North Dartmouth, MA 02747, USA, (cwang1@umassd.edu).}
\and Zhengru Zhang\thanks{Laboratory of Mathematics and Complex Systems, Ministry of Education and School of Mathematical Sciences, Beijing Normal University, Beijing 100875, P.R. China, (Corresponding Author: zrzhang@bnu.edu.cn).}}

\pagestyle{myheadings} \markboth{Second order scheme for the ternary MMC system}{L. Dong, C. Wang and Z. Zhang} \maketitle

          \begin{abstract}
          	A second order accurate numerical scheme is proposed and analyzed for the periodic three-component Macromolecular Microsphere Composite(MMC) hydrogels system, a ternary Cahn-Hilliard system with a Flory-Huggins-deGennes free energy potential. This numerical scheme with energy stability is based on the Backward Differentiation Formula(BDF) method in time derivation combining with Douglas-Dupont regularization term, combined the finite difference method in space. We provide a theoretical justification of positivity-preserving property for all the singular terms, i.e., not only the two phase variables are always between $0$ and $1$, but also the sum of the two phase variables is between $0$ and $1$, at a point-wise level. In addition, an optimal rate convergence analysis is provided in this paper, in which a higher order asymptotic expansion of the numerical solution,
          	the rough error estimate and refined error estimate techniques have to be included to accomplish such an analysis. This paper will be the first to combine the following theoretical properties for a second order accurate numerical scheme for the ternary MMC system: (i) unique solvability and positivity-preserving property; (ii) energy stability; (iii) and optimal rate convergence. A few numerical results are also presented.
        \end{abstract}
\begin{keywords}  Ternary Cahn-Hilliard system; second order accuracy; positivity preserving; energy stability; convergence analysis; rough error estimate and refined estimate
\end{keywords}

 \begin{AMS} 35K35; 65M06; 65M12
\end{AMS}
\section{Introduction}\label{intro}

Macromolecular microsphere composite (MMC) hydrogels, a class of polymeric materials, have attracted theoretical and experimental studies due to their well-defined network microstructures and high mechanical strength. 

A binary mathematical model was presented in~\cite{Zhai2012Investigation} to describe the periodic structures and the phase transitions of the MMC hydrogels based on Boltzmann entropy theory. The corresponding model leads to the MMC-TDGL equation, with a similar structure to the Cahn-Hilliard equation, but with certain singular gradient coefficients, is discussed in~\cite{Dong2019a, Lixiao2015, Li2016An, Yuan2022}. Also see the related works~\cite{Drury2003, Edlund2010, Huang2007, Ji2021, Johnson2010} for the hydrogel model. The binary Cahn-Hilliard equation with either polynomial Ginzburg-Landau or singular Flory-Huggins-type free energy models spinodal decomposition, phase separation, and coarsening in a two-phase fluid. There have been many theoretical analyses and numerical approximations for these kinds of gradient flows in the two-phase case~\cite{ chen16, chenY18, Feng2017A, diegel15a, diegel17, han15, liuY17, CHBDF2}.

For the ternary Cahn-Hilliard system, the general framework is to adopt three independent phase variables $(\phi_1,\phi_2,\phi_3)$ while enforcing a mass conservation (or ``no-voids") constraint $\phi_1+\phi_2+\phi_3=1$. See the related works~\cite{Boyer2006Numerical, Boyer2011Numerical, Yang2017Numerical,Yuan2021a}. A ternary system with Flory-Huggins-deGennes energy potential \cite{Ji2021} has been of great scientific interests, which turns out to be an improvement over the model proposed in~\cite{Zhai2012Investigation}, as it removes certain limiting assumptions. The singular Flory-Huggins-deGennes energy potential is as follows:
\begin{equation*}
	G_o(\phi_1,\phi_2,\phi_3) = \int_{\Omega} \left\{ S_o(\phi_1,\phi_2,\phi_3)+ \frac{1}{36} \sum_{i=1}^3 \frac{\varepsilon_i^2}{\phi_i} |\nabla\phi_i|^2 + H_o(\phi_1,\phi_2,\phi_3)\right\} d{\bf x},
\end{equation*}
where $S_o(\phi_1,\phi_2,\phi_3)+ H_o(\phi_1,\phi_2,\phi_3)$ is the reticular (Flory-Huggins style) free energy density:
\begin{align*}
	S_o(\phi_1,\phi_2,\phi_3) & = \frac{\phi_1}{M_0} \ln \frac{\alpha \phi_1}{M_0}+\frac{\phi_2}{N_0} \ln \frac{\beta \phi_2}{N_0} + \phi_3 \ln \phi_3,
	\\
	H_o(\phi_1,\phi_2,\phi_3) & = \chi_{12} \phi_1 \phi_2 + \chi_{13} \phi_1 \phi_3 + \chi_{23}\phi_2 \phi_3.
\end{align*}
$S_o$ is the ideal solution part and $H_o$ is the entropy of mixing part. 
The domain $\Omega \subset \mathbb{R}^2$ is assumed open, bounded, and simply connected. We focus on the 2-D case for simplicity of presentation, while the extension to the 3-D gradient flow is straightforward. The mass-conservative phase variables $\phi_1$, $\phi_2$ and $\phi_3$, represent the concentration of the macromolecular microsphere, the polymer chain, and the solvent, respectively. These three phase variables are subject to the ``no-voids" constraint $\phi_1 + \phi_2 + \phi_3= 1$. We denote by $M_0$ the relative volume of one macromolecular microsphere, and by $N_0$ the degree of polymerization of the polymer chains. The coefficient $\varepsilon_i$ is called the statistical segment length of the $i$-th component, which is always positive. The parameters $\alpha$ and $\beta$ depend on $M_0$ and $N_0$:

   \[
\alpha= \pi \Big( \Big( \frac{M_0}{\pi} \Big)^\frac12 + \frac{N_0}{2} \Big)^2, \quad
\beta= 2  \Big( \frac{M_0}{\pi} \Big)^\frac12 + N_0 .
\]
By $\chi_{12}, \chi_{13}$, and $\chi_{23}$ we denote the Huggins interaction parameters between (i) the macromolecular microspheres and polymer chains, (ii) the macromolecular microspheres and solvent, and (iii) the polymer chains and solvent, respectively. All these parameters are positive, and the following inequality is assumed to guarantee the concavity of the entropy of mixing $H_0$ term: 
\[
4\chi_{13}\chi_{23}-(\chi_{12}-\chi_{13}-\chi_{23})^2 >0.
\]
Making use of the no-voids constraint $\phi_3 = 1 - \phi_1 - \phi_2$, we can rewrite the energy functional as
\begin{align}
	\label{continuous energy1}
	G(\phi_1,\phi_2) &= \int_{\Omega} \bigg\{ S(\phi_1,\phi_2)+ \frac{\varepsilon_1^2|\nabla\phi_1|^2}{36\phi_1} + \frac{\varepsilon_2^2|\nabla\phi_2|^2}{36\phi_2}+
	\frac{\varepsilon_3^2|\nabla(1-\phi_1-\phi_2)|^2}{36(1-\phi_1-\phi_2)}
	\\
	& \quad + H(\phi_1,\phi_2) \bigg\} d{\bf x},	
	\nonumber
\end{align}
where
\begin{align*}
	S(\phi_1,\phi_2) & = \frac{\phi_1}{M_0} \ln \frac{\alpha \phi_1}{M_0}+\frac{\phi_2}{N_0} \ln \frac{\beta \phi_2}{N_0} + (1-\phi_1-\phi_2) \ln (1-\phi_1-\phi_2),
	\\
	H(\phi_1,\phi_2) &= \chi_{12} \phi_1 \phi_2 + \chi_{13} \phi_1 (1-\phi_1-\phi_2) + \chi_{23}\phi_2 (1-\phi_1-\phi_2).
\end{align*}
The ternary MMC dynamic equations are $H^{-1}$ gradient flows associated with the given energy functional \eqref{continuous energy1}:
\begin{equation}\label{MMC3term-equation}
	\partial_t \phi_1 = \mathcal{M}_1 \Delta \mu_1,\quad \partial_t \phi_2 = \mathcal{M}_2 \Delta
	\mu_2,
\end{equation}
where $\mathcal{M}_1, \mathcal{M}_2 >0$ are mobilities, which are assumed to be positive constants. The terms $\mu_1$ and $\mu_2$ are the chemical potentials with respect to $\phi_1$ and $\phi_2$,
respectively, i.e.,
\begin{align}
	\mu_1:=\delta_{\phi_1}G & = \frac{1}{M_0}\ln \frac{\alpha \phi_1}{M_0} - \ln(1-\phi_1-\phi_2) - 2 \chi_{13} \phi_1 +(\chi_{12}-\chi_{13}-\chi_{23})\phi_2
	\nonumber
	\\
	& \quad  + \chi_{13} + \frac{1}{M_0} -1 - \frac{\varepsilon_1^2|\nabla\phi_1|^2}{36\phi_1^2} - \nabla \cdot \left(\frac{\varepsilon_1^2\nabla \phi_1}{18\phi_1}\right)
	\label{mu_1}
	\\
	& \quad + \frac{\varepsilon_3^2|\nabla(1-\phi_1-\phi_2)|^2}{36(1-\phi_1-\phi_2)^2}
	+ \nabla \cdot \left(\frac{\varepsilon_3^2 \nabla (1-\phi_1-\phi_2)}{18(1-\phi_1-\phi_2)}
	\right),
	\nonumber
	\\
	\mu_2:=\delta_{\phi_2}G & = \frac{1}{N_0}\ln \frac{\beta \phi_2}{N_0} - \ln(1-\phi_1-\phi_2) - 2 \chi_{23} \phi_2 +(\chi_{12}-\chi_{13}-\chi_{23})\phi_1
	\nonumber
	\\
	& \quad  + \chi_{23} + \frac{1}{N_0} -1  - \frac{\varepsilon_2^2|\nabla\phi_2|^2}{36\phi_2^2} - \nabla \cdot \left(\frac{\varepsilon_2^2\nabla \phi_2}{18\phi_2}\right)
	\label{mu_2}
	\\
	& \quad  + \frac{\varepsilon_3^2|\nabla(1-\phi_1-\phi_2)|^2}{36(1-\phi_1-\phi_2)^2} + \nabla \cdot \left(\frac{\varepsilon_3^2 \nabla (1-\phi_1-\phi_2)}{18(1-\phi_1-\phi_2)} \right).
	\nonumber
\end{align}
For simplicity,
periodic boundary conditions are assumed. These equations would reduce to the classical ternary Cahn-Hilliard system if the gradient energy coefficients $\varepsilon_i^2/(36\phi_i)$ were replaced by $\varepsilon_i^2/2$. In any case, it is then easy to see that the energy is non-increasing for the ternary MMC model. The evolution equations \eqref{MMC3term-equation} are mass conservative; the mass fluxes are proportional to the gradients of the respective chemical potentials. Clearly the phase fields must satisfy $\phi_1>0$, $\phi_2>0$, and $1-\phi_1-\phi_2>0$ for the model to make sense physically and mathematically.  We define the following \emph{Gibbs Triangles} for use later:
\begin{equation}
	\mathcal{G} := \left\{ (\phi_1,\phi_2)\in\mathbb{R}^2 \ \middle| \ 0 < \phi_1,\, \phi_2, \ \phi_1+\phi_2 < 1 \right\}	,
	\label{eqn:Gibbs-tri}
\end{equation}
and, for $\delta \ge 0$, 
\begin{equation*}
	\mathcal{G}_\delta := \left\{ (\phi_1,\phi_2)\in\mathbb{R}^2 \ \middle| \ \delta <  \phi_1,\, \phi_2, \ \phi_1+\phi_2 <  1 -\delta \right\}	.
\end{equation*}
Of course, $\mathcal{G}_0 = \mathcal{G}$, and $\mathcal{G}_\delta\subseteq \mathcal{G}$, for each $\delta\ge0$. If $\left(\phi_1(\, \cdot\, , t),\phi_2(\, \cdot\, , t)\right)\in \mathcal{G}$, point-wise, for all $t\ge 0$, we say that the \emph{positivity-preserving property} holds for the equation. If, for some strictly positive $\delta >0$, $\left(\phi_1(\, \cdot\, , t),\phi_2(\, \cdot\, , t)\right)\in \mathcal{G}_\delta$, point-wise, for all $t\ge 0$, we say that a \emph{strict separation property} holds for the equation.

There have been some numerical works for the ternary MMC system, while most existing works have focused on first order accurate (in time) algorithms. Such as the recent literature \cite{Dong2020b,dong2022}, the authors presented a first order discrete finite difference numerical scheme based on the convex splitting method of the free energy with logarithmic potential, established a theoretical justification of the positivity property and convergence analysis. Also see the related finite element work \cite{Yuan2021a}. One well-known drawback of the first order convex splitting approach is that an extra dissipation added to ensure unconditional stability also introduces a significant amount of numerical error~\cite{Christlieb2014High}. Due to this fact, second-order energy stable methods have been highly desirable, which preserves all important theoretical features, i.e., unique solvability, positivity preserving, mass conversation, energy stability and convergence analysis.

The goal of this paper is to extend the convex-splitting framework to develop a second order in both time and space for the ternary MMC system. We propose and analyze a numerical scheme with four theoretical properties justified: unique solvability and positivity-preserving, mass conservation, energy stability and convergence analysis. This scheme is based on the 2nd BDF temporal approximation and the finite difference method in space for the ternary MMC system.  Based on the idea of convex splitting, we treat the convex part implicitly and the linear part explicitly using the second-order Adams-Bashforth extrapolation formula. In addition, a second order Douglas-Dupont regularization of the form $A_i\dt\Delta_h(\phi_i^{n+1}-\phi_i^n)$ is specifically introduced to ensure the energy stability in this paper, this technique is similar in~\cite{Feng2017A,cheng18ch,Dong2020a,CHBDF2}. Moreover, the highly nonlinear and singular nature of the surface diffusion coefficients makes the system turn to be a very challenging issue. In this paper, we will adopt similar techniques  in~\cite{chen19b,Dong2020b} to estimate the positivity property. First, the fully discrete numerical scheme is equivalent to a minimization of a strictly convex discrete energy functional, so we can transform the positivity preserving problem of the numerical solution into the problem that the minimizer of this functional could not occur on the boundary points. A more careful analysis reveals that, the convex and the singular natures of these implict nonlinear parts prevent the numerical solutions approach the singular limit values of $0$ and $1$, so that the phase variables are always between $0$ and $1$. At the same time, the sum of these two phase variables is between $0$  and $1$, at a point-wise level. Although the extra term $A_i\dt \Delta_h (\phi_i^{n+1}-\phi_i^n)$ is added into the numerical scheme, it does not matter because the logarithmic function always changes faster than the linear function as the phase variable approaches the boundary points. 
For convergence analysis, to control the explicit part of the extra regularization term, a higher order asymptotic expansion (up to third order temporal accuracy and fourth order spatial accuracy) has to be involved. To overcome the highly nonlinear and singular nature of the surface diffusion coefficients, a rough error estimate has to be performed, so that the $\ell^{\infty}$ bound for $\phi_i$ could be derived. This $\ell^\infty$ estimate yields the upper and lower bounds of the three variables, and these bounds play a crucial role in the subsequent analysis. Finally, the refined error estimate is carried out to accomplish the desired convergence result.  

The rest part of this paper is organized as follows. In Section \ref{sec:numerical scheme}, we present a finite difference scheme basd on the 2nd BDF method and the idea of convex splitting method of the energy functional. In Section \ref{sec:positivity-preserving property}, the unique solvability and the positivity preserving property of the numerical solutions are analyzed. The energy stability analysis is provided in Section \ref{sec:unconditional energy stability}. The detailed convergence analysis is given by Section \ref{sec:convergence}. Some numerical results are presented in Section \ref{sec:numerical results}. Finally, concluding remarks are made in Section \ref{sec:conclusion}.
\section{The fully discrete numerical scheme} \label{sec:numerical scheme}	

\subsection{The finite difference spatial discretization}
\label{subsec:finite difference}

We use the notation and results for some discrete functions and operators from~\cite{Feng2017A, wise10, wise09a}. Let $\Omega = (0,L_x)\times(0,L_y)$, where for simplicity, we assume $L_x =L_y =: L > 0$. Let $N\in\mathbb{N}$ be given, and define the grid spacing $h := \frac{L}{N}$, i.e., a uniform spatial mesh size is taken for simplicity of presentation. 
We define the following two uniform, infinite grids with grid spacing $h>0$: $E := \{ p_{i+\hf} \ |\ i\in {\mathbb{Z}}\}$, $C := \{ p_i \ |\ i\in {\mathbb{Z}}\}$, where $p_i = p(i) := (i-\hf)\cdot h$. Consider the following 2-D discrete $N^2$-periodic function spaces:
	\begin{eqnarray*}
	\begin{aligned}
		{\mathcal C}_{\rm per} &:= \left\{\nu: C\times C
		\rightarrow {\mathbb{R}}\ \middle| \ \nu_{i,j} = \nu_{i+\alpha N,j+\beta N}, \ \forall \, i,j,\alpha,\beta, \in \mathbb{Z} \right\},
		\\
		{\mathcal E}^{\rm x}_{\rm per} &:=\left\{\nu: E\times C \rightarrow {\mathbb{R}}\ \middle| \ \nu_{i+\frac12,j}= \nu_{i+\frac12+\alpha N,j+\beta N}, \ \forall \, i,j,\alpha,\beta \in \mathbb{Z}\right\} ,
	\end{aligned}
\end{eqnarray*}
in which identification $\nu_{i,j} = \nu(p_i,p_j)$ is taken. The space  ${\mathcal E}^{\rm y}_{\rm per}$ is analogously defined. The functions of ${\mathcal C}_{\rm per}$ are called {\emph{cell-centered functions}}, and the functions of ${\mathcal E}^{\rm x}_{\rm per}$, ${\mathcal E}^{\rm y}_{\rm per}$ are called {\emph{east-west}},  {\emph{north-south face-centered functions}}, respectively.  We also define the mean zero space $\mathring{\mathcal C}_{\rm per}:= \{\nu\in {\mathcal C}_{\rm per} \ \Big| 0 = \overline{\nu} :=  \frac{h^2}{| \Omega|} \sum_{i,j=1}^N \nu_{i,j} \}$, and denote $\vec{\mathcal{E}}_{\rm per} := {\mathcal E}^{\rm x}_{\rm per}\times {\mathcal E}^{\rm y}_{\rm per}$. The space 
$\vec{\mathcal{C}}_{\rm per}^{\mathcal{G}}$ is defined as 
	\[
\vec{\mathcal{C}}_{\rm per}^{\mathcal{G}} := \left\{(u_1,u_2) \in \mathcal{C}_{\rm per}\times \mathcal{C}_{\rm per} \ \middle| \ ({u_1}_{i,j},{u_2}_{i,j}) \in\mathcal{G} , \quad i,j\in\mathbb{Z}  \right\}, 
\]
where ${\mathcal G}$ is the Gibbs Triangle~\eqref{eqn:Gibbs-tri}. In addition, the following difference and average operators are introduced:
\begin{eqnarray*}
	&& A_x \nu_{i+\hf,j} := \frac{1}{2}\left(\nu_{i+1,j} + \nu_{i,j} \right), \quad D_x \nu_{i+\hf,j} := \frac{1}{h}\left(\nu_{i+1,j} - \nu_{i,j} \right),
	\\
	&& A_y \nu_{i,j+\hf} := \frac{1}{2}\left(\nu_{i,j+1} + \nu_{i,j} \right), \quad D_y \nu_{i,j+\hf} := \frac{1}{h}\left(\nu_{i,j+1} - \nu_{i,j} \right) ,
\end{eqnarray*}
with $A_x,\, D_x: {\mathcal C}_{\rm per}\rightarrow{\mathcal E}_{\rm per}^{\rm x}$, $A_y,\, D_y: {\mathcal C}_{\rm per}\rightarrow{\mathcal E}_{\rm per}^{\rm y}$.
Likewise,
\begin{eqnarray*}
	&& a_x \nu_{i, j} := \frac{1}{2}\left(\nu_{i+\hf, j} + \nu_{i-\hf, j} \right),	 \quad d_x \nu_{i, j} := \frac{1}{h}\left(\nu_{i+\hf, j} - \nu_{i-\hf, j} \right),
	\\
	&& a_y \nu_{i,j} := \frac{1}{2}\left(\nu_{i,j+\hf} + \nu_{i,j-\hf} \right),	 \quad d_y \nu_{i,j} := \frac{1}{h}\left(\nu_{i,j+\hf} - \nu_{i,j-\hf} \right),
\end{eqnarray*}
with $a_x,\, d_x : {\mathcal E}_{\rm per}^{\rm x}\rightarrow{\mathcal C}_{\rm per}$, and $a_y,\, d_y : {\mathcal E}_{\rm per}^{\rm y}\rightarrow{\mathcal C}_{\rm per}$. The discrete gradient $\nabh:{\mathcal C}_{\rm per}\rightarrow \vec{\mathcal{E}}_{\rm per}$ and the discrete divergence $\nabh\cdot :\vec{\mathcal{E}}_{\rm per} \rightarrow {\mathcal C}_{\rm per}$ are given by
\[
\nabh\nu_{i,j} =\left( D_x\nu_{i+\hf, j},  D_y\nu_{i, j+\hf} \right) ,
\quad
\nabh\cdot\vec{f}_{i,j} = d_x f^x_{i,j}	+ d_y f^y_{i,j} ,
\]
where $\vec{f} = (f^x,f^y)\in \vec{\mathcal{E}}_{\rm per}$. The standard 2-D discrete Laplacian, $\Delta_h : {\mathcal C}_{\rm per}\rightarrow{\mathcal C}_{\rm per}$, becomes
\begin{align*}
	\Delta_h \nu_{i,j} 
	:=  d_x(D_x \nu)_{i,j} + d_y(D_y \nu)_{i,j}
	= \frac{1}{h^2}\left( \nu_{i+1,j}+\nu_{i-1,j}+\nu_{i,j+1}+\nu_{i,j-1} - 4\nu_{i,j}\right).
\end{align*}
More generally, if $\mathcal{D}$ is a periodic \emph{scalar} function that is defined at all of the face-centered points and $\vec{f}\in\vec{\mathcal{E}}_{\rm per}$, then $\mathcal{D}\vec{f}\in\vec{\mathcal{E}}_{\rm per}$, assuming point-wise multiplication, and we may define
$\nabla_h\cdot \big(\mathcal{D} \vec{f} \big)_{i,j} = d_x\left(\mathcal{D}f^x\right)_{i,j}  + d_y\left(\mathcal{D}f^y\right)_{i,j}$.
Specifically, if $\nu\in \mathcal{C}_{\rm per}$, then $\nabla_h \cdot\left(\mathcal{D} \nabla_h  \ \ \right):\mathcal{C}_{\rm per} \rightarrow \mathcal{C}_{\rm per}$ is defined point-wise via
$\nabla_h\cdot \big(\mathcal{D} \nabla_h \nu \big)_{i,j} = d_x\left(\mathcal{D}D_x\nu\right)_{i,j}  + d_y\left(\mathcal{D} D_y\nu\right)_{i,j}$. In particular, suppose that $\nu,\phi\in{\mathcal{C}}_{\rm per}$ are grid functions and $\sigma:\mathbb{R}\to\mathbb{R}$ is a continuous function. Then we define
\[
\nabla_h\cdot \big(\sigma(\mathcal{A}_h\nu) \nabla_h \phi \big)_{i,j} := d_x\left(\sigma(A_x\nu) D_x\nu\right)_{i,j}  + d_y\left( \sigma(A_y\nu)  D_y\phi\right)_{i,j},
\]
where $\mathcal{A}_h\nu$ is understood to be a periodic function defined at the face-centered points obtained by doing appropriate east-west and north-south averages.

In addition, the following grid inner products are defined:
\begin{small}
\begin{equation*}
	\begin{aligned}
		\langle \nu , \xi \rangle &:= h^2 \sum_{i,j=1}^N  \nu_{i,j}\, \xi_{i,j}, \quad
		\nu,\, \xi\in {\mathcal C}_{\rm per},\quad
		[ \vec{f}_1 , \vec{f}_2 ] := \eipx{f_1^x}{f_2^x}	+ \eipy{f_1^y}{f_2^y} , \quad \vec{f}_i = (f_i^x,f_i^y) \in \vec{\mathcal{E}}_{\rm per},  
		\\
		\eipx{\nu}{\xi} &:= \langle a_x(\nu\xi) , 1 \rangle ,\quad \nu,\, \xi\in{\mathcal E}^{\rm x}_{\rm per}, \quad
		\eipy{\nu}{\xi}  := \langle a_y(\nu\xi) , 1 \rangle ,  \quad \nu,\, \xi\in{\mathcal E}^{\rm y}_{\rm per} .
	\end{aligned}
\end{equation*}
\end{small}	
Subsequently, we define the following norms for cell-centered functions. If $\nu\in {\mathcal C}_{\rm per}$, then $\nrm{\nu}_2^2 := \langle \nu , \nu \rangle$; $\nrm{\nu}_p^p := \langle |\nu|^p, 1 \rangle$, for $1\le p< \infty$, and $\nrm{\nu}_\infty := \max_{1\le i,j\le N}\left|\nu_{i,j}\right|$. The gradient norms are introduced as follows:
\begin{eqnarray*}
	&&
	\nrm{ \nabla_h \nu}_2^2 : = [\nabh \nu, \nabh \nu ] = \eipx{D_x\nu}{D_x\nu} + \eipy{D_y\nu}{D_y\nu} ,  \quad \mbox{for} \, \,
	\nu \in{\mathcal C}_{\rm per} ,
	\\
	&&
	\nrm{\nabla_h \nu}_p := \left( \eipx{|D_x\nu|^p}{1} + \eipy{|D_y\nu|^p}{1} \right)^{\frac1p}  ,  \quad  1\le p< \infty .
\end{eqnarray*}
The discrete $H^1$ norm is defined as $\nrm{\nu}_{H_h^1}^2 : =  \nrm{\nu}_2^2+ \nrm{ \nabla_h \nu}_2^2$.

\begin{lem}
	\label{lemma1}   \cite{wang11a, wise09a}
	Let $\mathcal{D}$ be an arbitrary periodic, scalar function defined on all of the face center points. For any $\psi, \nu \in {\mathcal C}_{\rm per}$ and any $\vec{f}\in\vec{\mathcal{E}}_{\rm per}$, the following summation by parts formulas are valid:
	\begin{equation*}
		\langle \psi , \nabla_h\cdot\vec{f} \rangle = - [ \nabla_h \psi , \vec{f} ], \quad
		\langle \psi, \nabla_h\cdot \left(\mathcal{D}\nabla_h\nu\right) \rangle
		= - [ \nabla_h \psi , \mathcal{D}\nabla_h\nu ] .
	\end{equation*}
\end{lem}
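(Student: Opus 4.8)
\emph{Proof strategy.} This is the classical discrete summation-by-parts identity on a periodic mesh; the plan is to reduce it to a one-dimensional Abel summation and to exploit the $N^2$-periodicity of the grid functions in order to kill all end terms. First I would unfold the definitions of the discrete divergence and the cell-centered inner product,
\[
\langle \psi, \nabla_h\cdot\vec f\rangle
= h^2\sum_{i,j=1}^N \psi_{i,j}\big(d_x f^x_{i,j} + d_y f^y_{i,j}\big),
\qquad
d_x f^x_{i,j} = \tfrac1h\big(f^x_{i+\hf,j}-f^x_{i-\hf,j}\big),
\]
and likewise for $d_y f^y$, and then treat the $x$- and $y$-contributions separately.

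The key manipulation is a reindexing of the lagging term: by $N$-periodicity of $\psi$ and $f^x$, $\sum_{i=1}^N \psi_{i,j}\,f^x_{i-\hf,j} = \sum_{i=1}^N \psi_{i+1,j}\,f^x_{i+\hf,j}$, so the $x$-contribution collapses to $-\,h^2\sum_{i,j=1}^N (D_x\psi)_{i+\hf,j}\,f^x_{i+\hf,j}$. Next I would recognize this sum as $-\eipx{D_x\psi}{f^x}$: from $\eipx{\cdot}{\cdot}=\langle a_x(\cdot\,\cdot),1\rangle$ and one further use of periodicity — the averaging $a_x$ splits each summand into a forward and a backward half, which coincide under the full periodic sum — one gets $\eipx{D_x\psi}{f^x}=h^2\sum_{i,j=1}^N (D_x\psi)_{i+\hf,j}\,f^x_{i+\hf,j}$. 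The $y$-contribution is handled identically, and adding the two yields $\langle \psi,\nabla_h\cdot\vec f\rangle = -\eipx{D_x\psi}{f^x}-\eipy{D_y\psi}{f^y} = -[\nabla_h\psi,\vec f]$, which is the first identity.

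For the second identity I would simply observe that, since $\mathcal{D}$ is periodic and defined at the face centers, the product $\mathcal{D}\nabla_h\nu := (\mathcal{D}D_x\nu,\,\mathcal{D}D_y\nu)$ again lies in $\vec{\mathcal{E}}_{\rm per}$, so the first identity applies verbatim with $\vec f := \mathcal{D}\nabla_h\nu$, giving $\langle \psi,\nabla_h\cdot(\mathcal{D}\nabla_h\nu)\rangle = -[\nabla_h\psi,\mathcal{D}\nabla_h\nu]$. There is no genuine obstacle here; the only point requiring care is the bookkeeping of the half-integer face indices and the periodic index shifts — one must check that, because $E$ and $C$ are genuinely infinite periodic grids rather than truncated ones, no uncancelled boundary term survives. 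Once that is set up the argument is purely mechanical, which is why it is stated here (and in the cited references \cite{wang11a, wise09a}) without belaboring the computation.
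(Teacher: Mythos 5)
Your argument is correct and is precisely the standard reindexing proof that the paper omits by citing \cite{wang11a, wise09a}: the periodic shift $\sum_i \psi_{i,j} f^x_{i-\hf,j} = \sum_i \psi_{i+1,j} f^x_{i+\hf,j}$ collapses the divergence sum to $-h^2\sum_{i,j}(D_x\psi)_{i+\hf,j}f^x_{i+\hf,j}$, and your identification of this with $-\eipx{D_x\psi}{f^x}$ via the periodicity of the $a_x$ average is exactly right. The reduction of the second identity to the first by taking $\vec f = \mathcal{D}\nabla_h\nu \in \vec{\mathcal{E}}_{\rm per}$ is also the intended route, so there is nothing to add.
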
	
To facilitate the analysis, we need to introduce a discrete analogue of the space $H_{per}^{-1}\left(\Omega\right)$, as outlined in~\cite{wang11a}. Suppose that $\mathcal{D}$ is a positive, periodic scalar function defined at edge-center points. For any $\phi\in{\mathcal C}_{\rm per}$, there exists a unique $\psi\in\mathring{\mathcal C}_{\rm per}$ that solves
\begin{eqnarray*}
	\mathcal{L}_{\mathcal{D}}(\psi):= - \nabla_h \cdot\left(\mathcal{D}\nabla_h \psi\right) =
	\phi - \overline{\phi} ,
\end{eqnarray*}
where $\overline{\phi} := |\Omega|^{-1}\langle \phi,1 \rangle$. We equip this space with a bilinear form: for any $\phi_1,\, \phi_2\in \mathring{\mathcal C}_{\rm per}$, define
\begin{equation*}
	\cipgen{ \phi_1 }{ \phi_2 }{\mathcal{L}_{\mathcal{D}}^{-1}} := [\mathcal{D}\nabla_h \psi_1 ,\nabla_h \psi_2],
\end{equation*}
where $\psi_i\in\mathring{\mathcal C}_{\rm per}$ is the unique solution to
\begin{equation*}
	\mathcal{L}_{\mathcal{D}}(\psi_i):= - \nabla_h \cdot\left(\mathcal{D}\nabla_h \psi_i\right)  =
	\phi_i, \quad i = 1, 2.
\end{equation*}
The following identity~\cite{wang11a} is easy to prove via summation-by-parts:
\begin{equation*}
	\cipgen{\phi_1 }{ \phi_2 }{\mathcal{L}_{\mathcal{D}}^{-1}} = \langle \phi_1,
		\mathcal{L}_{\mathcal{D}}^{-1} (\phi_2)\rangle = \langle \mathcal{L}_{\mathcal{D}}^{-1} (\phi_1),\phi_2 \rangle,
\end{equation*}
and since $\mathcal{L}_{\mathcal{D}}$ is symmetric positive definite, $\cipgen{ \ \cdot \ }{\ \cdot \ }{\mathcal{L}_{\mathcal{D}}^{-1}}$ is an inner product on $\mathring{\mathcal C}_{\rm per}$. When $\mathcal{D}\equiv 1$, we drop the subscript and write $\mathcal{L}_{1} = \mathcal{L} = -\Delta_h$, and introduce the notation $\cipgen{ \ \cdot \ }{\ \cdot \ }{\mathcal{L}_{\mathcal{D}}^{-1}} =: \cipgen{ \ \cdot \ }{\ \cdot \ }{-1,h}$. In the general setting, the norm associated to this inner product is denoted $\nrm{\phi}_{\mathcal{L}_{\mathcal{D}}^{-1}} := \sqrt{\cipgen{\phi }{ \phi }{\mathcal{L}_{\mathcal{D}}^{-1}}}$, for all $\phi \in \mathring{\mathcal C}_{\rm per}$, but, if $\mathcal{D}\equiv 1$, we write $\nrm{\, \cdot \, }_{\mathcal{L}_{\mathcal{D}}^{-1}} =: \nrm{\, \cdot \, }_{-1,h}$.

\subsection{A convex-concave decomposition of the discrete energy}

In this section, we will recall a convex-concave decomposition of the energy~\eqref{continuous energy1}. The detailed proof of the following preliminary and lemma results could be found in the work \cite{Dong2020b}.

Define $\kappa(\phi): = \frac{1}{36\phi}$. The discrete energy $G_h(\phi_1,\phi_2):\vec{\mathcal{C}}_{\rm per}^{\mathcal{G}}\rightarrow \mathbb{R}$ is introduced as
\begin{align}
	G_h(\phi_1,\phi_2) & =\langle S(\phi_1,\phi_2) + H(\phi_1,\phi_2),1\rangle
	\nonumber
	\\
	& \quad +\langle a_x(\kappa(A_x\phi_1)(D_x\phi_1)^2)+a_y(\kappa(A_y\phi_1)(D_y\phi_1)^2),\varepsilon_1^2\rangle
	\nonumber
	\\
	&\quad +\langle a_x(\kappa(A_x\phi_2)(D_x\phi_2)^2)+a_y(\kappa(A_y\phi_2)(D_y\phi_2)^2),\varepsilon_2^2\rangle
	\nonumber
	\\
	& \quad +\langle a_x(\kappa(A_x (1-\phi_1-\phi_2))(D_x(1-\phi_1-\phi_2))^2),\varepsilon_3^2\rangle
	\nonumber
	\\
	& \quad +\langle a_y(\kappa(A_y(1-\phi_1-\phi_2))(D_y(1-\phi_1-\phi_2))^2),\varepsilon_3^2\rangle.
	\label{Full-discrete-energy}
\end{align}
\begin{lem}[Existence of a convex-concave decomposition]
	\label{Full-discrete-energy-splitting}
	Suppose $(\phi_1,\phi_2) \in \vec{\mathcal{C}}_{\rm per}^{\mathcal{G}}$. The functions
	\begin{align}
		G_{h,c}(\phi_1,\phi_2) &: =\langle S(\phi_1,\phi_2),1\rangle
		\label{Discrete-energy-c}
		\\
		&  \quad + \langle a_x(\kappa(A_x\phi_1)(D_x\phi_1)^2)+a_y(\kappa(A_y\phi_1)(D_y\phi_1)^2),\varepsilon_1^2\rangle
		\nonumber
		\\
		& \quad + \langle a_x(\kappa(A_x\phi_2)(D_x\phi_2)^2)+a_y(\kappa(A_y\phi_2)(D_y\phi_2)^2),\varepsilon_2^2\rangle
		\nonumber
		\\
		& \quad +\langle a_x(\kappa(A_x(1-\phi_1-\phi_2))(D_x(1-\phi_1-\phi_2))^2),\varepsilon_3^2\rangle
		\nonumber
		\\
		& \quad +\langle a_y(\kappa(A_y(1-\phi_1-\phi_2))(D_y(1-\phi_1-\phi_2))^2),\varepsilon_3^2\rangle,	
		\nonumber
		\\
		G_{h,e}(\phi_1,\phi_2) &:= -\langle  H(\phi_1,\phi_2),1\rangle ,
		\label{Discrete-energy-e}
	\end{align}
  where $G_{h,c}$ and $G_{h,e}$ are linear combination of certain convex functions. Therefore, $G_h(\phi_1,\phi_2)=G_{h,c}(\phi_1,\phi_2)-G_{h,e}(\phi_1,\phi_2)$ is a convex-concave decomposition of the discrete energy.
\end{lem}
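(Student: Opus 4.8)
The plan is to establish the decomposition by proving that $G_{h,c}$ and $G_{h,e}$ are each convex as functions on the admissible set $\vec{\mathcal{C}}_{\rm per}^{\mathcal{G}}$, viewed as an open convex subset of $\mathbb{R}^{2N^2}$; the claimed splitting $G_h = G_{h,c} - G_{h,e}$ with $G_{h,c}$ convex and $G_{h,e}$ convex is then immediate from \eqref{Full-discrete-energy}--\eqref{Discrete-energy-e}. Throughout I will use the two standard stability properties: a composition of a convex function with an affine map is convex, and a nonnegative linear combination (in particular a sum over the grid with positive weights $h^2$, $h^2\varepsilon_i^2$) of convex functions is convex.

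For $G_{h,c}$ I isolate two elementary building blocks. First, $\eta \mapsto \eta\ln\eta$ is convex on $(0,\infty)$, so each of $\frac{\phi_1}{M_0}\ln\frac{\alpha\phi_1}{M_0}$, $\frac{\phi_2}{N_0}\ln\frac{\beta\phi_2}{N_0}$, $(1-\phi_1-\phi_2)\ln(1-\phi_1-\phi_2)$ is, up to an affine term, the composition of $\eta\mapsto\eta\ln\eta$ with one of the affine maps $(\phi_1,\phi_2)\mapsto\phi_1,\ \phi_2,\ 1-\phi_1-\phi_2$, hence convex in $(\phi_1,\phi_2)$ at each grid point; summing over the grid shows $\langle S(\phi_1,\phi_2),1\rangle$ is convex (in fact strictly convex in $(\phi_1,\phi_2)$, because of the $\phi_1$- and $\phi_2$-terms, a point useful later for unique solvability). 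Second — and this is where the Gibbs-triangle constraint is genuinely used — the quadratic-over-linear function $g(a,b):=b^2/a$ is jointly convex on $(0,\infty)\times\mathbb{R}$, since its Hessian is $\frac{2}{a^3}\begin{pmatrix} b^2 & -ab\\ -ab & a^2\end{pmatrix}\succeq 0$. For a fixed edge the map $\phi_i\mapsto(A_x\phi_i,D_x\phi_i)$ (and likewise $(A_y\phi_i,D_y\phi_i)$) is linear, so $\kappa(A_x\phi_i)(D_x\phi_i)^2 = \tfrac1{36}\,g(A_x\phi_i,D_x\phi_i)$ is convex on the region where $A_x\phi_i>0$; the point-wise bounds $\phi_i>0$ and $1-\phi_1-\phi_2>0$ defining $\mathcal{G}$ guarantee $A_x\phi_i>0$ and $A_x(1-\phi_1-\phi_2)>0$, and similarly in $y$. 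Applying $a_x,a_y$ and $\langle\,\cdot\,,\varepsilon_i^2\rangle$ amounts to a nonnegative linear combination, so each of the four gradient contributions in \eqref{Discrete-energy-c} is convex, and $G_{h,c}$, being their sum together with $\langle S,1\rangle$, is convex.

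For $G_{h,e} = -\langle H(\phi_1,\phi_2),1\rangle$ it suffices to show $H$ is concave in $(\phi_1,\phi_2)$. Since $H$ is a quadratic polynomial, at each grid point its Hessian is the constant matrix $\begin{pmatrix} -2\chi_{13} & \chi_{12}-\chi_{13}-\chi_{23}\\ \chi_{12}-\chi_{13}-\chi_{23} & -2\chi_{23}\end{pmatrix}$; its leading entry $-2\chi_{13}$ is negative since $\chi_{13}>0$, and its determinant $4\chi_{13}\chi_{23}-(\chi_{12}-\chi_{13}-\chi_{23})^2$ is positive by the standing assumption on the interaction parameters, so the Hessian is negative definite, $H$ is concave, $-H$ is convex, and summing over the grid with positive weights makes $G_{h,e}$ convex. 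Combining the two halves gives the convex-concave decomposition.

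Apart from these routine manipulations, the one point that requires care is the deGennes gradient term: one must recognize $\kappa(A_x\phi_i)(D_x\phi_i)^2$ as a scalar multiple of the quadratic-over-linear (perspective) function evaluated on the linear images of $\phi_i$ under the averaging and difference operators, and note that its joint convexity is precisely what requires $A_x\phi_i>0$ — i.e. the restriction to $\mathcal{G}$ at the point-wise level is essential, not merely a physical normalization, since these contributions fail to be convex once $\phi_i$ or $1-\phi_1-\phi_2$ is allowed to vanish.
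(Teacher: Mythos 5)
Your proof is correct and complete, and it is essentially the argument the paper relies on (the paper itself defers the proof of this lemma to \cite{Dong2020b}): convexity of $\eta\ln\eta$ composed with affine maps for the entropy part, joint convexity of the quadratic-over-linear function $b^2/a$ on $(0,\infty)\times\mathbb{R}$ applied to the linear images $A_x\phi_i$, $D_x\phi_i$ (which is exactly where the point-wise Gibbs-triangle restriction is needed) for the deGennes gradient terms, and negative definiteness of the constant Hessian of $H$ under the standing assumption $4\chi_{13}\chi_{23}-(\chi_{12}-\chi_{13}-\chi_{23})^2>0$ for the concave part. Nothing is missing.
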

	\begin{proposition}
	Suppose $(\phi_1,\phi_2) \in \vec{\mathcal{C}}_{\rm per}^{\mathcal{G}}$. The variational derivatives of $G_{h,c}$ and $G_{h,e}$ with respect to $\phi_1$ and $\phi_2$ are grid functions satisfying	
	\begin{align}
		\delta_{\phi_i}G_{h,c}(\phi_1,\phi_2) & =  \frac{\partial}{\partial \phi_i} S(\phi_1,\phi_2)
		\nonumber
		\\
		& \quad +\varepsilon_i^2 a_x(\kappa'(A_x\phi_i)(D_x\phi_i)^2)-2\varepsilon_i^2 d_x(\kappa(A_x\phi_i) D_x\phi_i)
		\nonumber
		\\
		& \quad + \varepsilon_i^2 a_y(\kappa'(A_y\phi_i)(D_y\phi_i)^2)-2\varepsilon_i^2 d_y(\kappa(A_y\phi_i) D_y\phi_i)
		\nonumber
		\\
		& \quad - \varepsilon_3^2 a_x( \kappa'(A_x(1-\phi_1-\phi_2))(D_x(1-\phi_1-\phi_2))^2)
		\nonumber
		\\
		& \quad + 2\varepsilon_3^2 d_x(\kappa(A_x(1-\phi_1-\phi_2)) D_x(1-\phi_1-\phi_2) )
		\nonumber
		\\
		& \quad - \varepsilon_3^2 a_y( \kappa'(A_y(1-\phi_1-\phi_2))(D_y(1-\phi_1-\phi_2))^2)
		\nonumber
		\\
		& \quad  + 2\varepsilon_3^2 d_y(\kappa(A_y(1-\phi_1-\phi_2)) D_y(1-\phi_1-\phi_2) ),
		\nonumber
		\\
		\delta_{\phi_i}G_{h,e}(\phi_1,\phi_2) & = - \frac{\partial}{\partial \phi_i} H(\phi_1,\phi_2),
		\nonumber
	\end{align}
	for $i = 1,2$.
\end{proposition}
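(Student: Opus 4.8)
The plan is to compute each variational derivative directly from its definition: for an arbitrary test grid function $\psi \in \mathcal{C}_{\rm per}$, form $\frac{d}{ds} G_{h,c}(\phi_1 + s\psi, \phi_2)\big|_{s=0}$ (and analogously in the $\phi_2$ direction), and then identify the unique grid function $\delta_{\phi_1}G_{h,c}(\phi_1,\phi_2)$ for which this equals $\langle \delta_{\phi_1}G_{h,c}(\phi_1,\phi_2), \psi\rangle$ for every such $\psi$. Since $\mathcal{G}$ is open, $(\phi_1 + s\psi, \phi_2)\in\vec{\mathcal C}_{\rm per}^{\mathcal G}$ for $|s|$ small, so each term of $G_{h,c}$ is a smooth function of $s$ near $s=0$ — in particular $\kappa(\phi)=\frac{1}{36\phi}$ and $\kappa'$ are evaluated only at strictly positive arguments — and term-by-term differentiation is justified. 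Uniqueness of the representing grid function is immediate since $\langle\cdot,\cdot\rangle$ is an inner product on $\mathcal{C}_{\rm per}$.

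First I would dispatch the Flory--Huggins part: the chain rule gives $\frac{d}{ds}\langle S(\phi_1+s\psi,\phi_2),1\rangle\big|_{s=0}=\langle \partial_{\phi_1}S(\phi_1,\phi_2),\psi\rangle$, and likewise $G_{h,e}=-\langle H(\phi_1,\phi_2),1\rangle$ contributes $-\langle \partial_{\phi_i}H(\phi_1,\phi_2),\psi\rangle$, which is already the asserted formula for $\delta_{\phi_i}G_{h,e}$. Next, for a representative deGennes term such as $\varepsilon_1^2\langle a_x(\kappa(A_x\phi_1)(D_x\phi_1)^2),1\rangle$, the product and chain rules (using linearity of $A_x$ and $D_x$) produce, at $s=0$, the two contributions $\varepsilon_1^2\langle a_x(\kappa'(A_x\phi_1)(A_x\psi)(D_x\phi_1)^2),1\rangle + 2\varepsilon_1^2\langle a_x(\kappa(A_x\phi_1)(D_x\phi_1)(D_x\psi)),1\rangle$. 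The key algebraic step is then a pair of discrete adjoint / summation-by-parts identities: for $g\in\mathcal{E}^{\rm x}_{\rm per}$ one has $\langle a_x(g\,A_x\psi),1\rangle = \langle a_x(g),\psi\rangle$ (the averaging operator $A_x$ is adjoint to $a_x$, proved by an index shift using $N$-periodicity), and $\langle a_x(g\,D_x\psi),1\rangle = \eipx{g}{D_x\psi} = -\langle d_x(g),\psi\rangle$, which is exactly Lemma~\ref{lemma1} applied to $\vec f = (g,0)$. Applying these with $g=\kappa'(A_x\phi_1)(D_x\phi_1)^2$ and $g=\kappa(A_x\phi_1)D_x\phi_1$ respectively, and repeating verbatim in the $y$-direction, yields precisely $\varepsilon_1^2 a_x(\kappa'(A_x\phi_1)(D_x\phi_1)^2) - 2\varepsilon_1^2 d_x(\kappa(A_x\phi_1)D_x\phi_1)$ together with the analogous $y$-terms. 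The $\phi_2$-gradient block is handled identically with the index $1\to 2$.

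Finally, for the $\varepsilon_3^2$ block involving $1-\phi_1-\phi_2$, note $\frac{d}{ds}\bigl(1-(\phi_1+s\psi)-\phi_2\bigr)\big|_{s=0}=-\psi$, so the same computation applies with $\psi$ replaced by $-\psi$ throughout; this flips the sign of both resulting terms, producing $-\varepsilon_3^2 a_x(\kappa'(A_x(1-\phi_1-\phi_2))(D_x(1-\phi_1-\phi_2))^2) + 2\varepsilon_3^2 d_x(\kappa(A_x(1-\phi_1-\phi_2))D_x(1-\phi_1-\phi_2))$ and its $y$-counterpart. Collecting all pieces gives the stated formula for $\delta_{\phi_1}G_{h,c}$; the $\delta_{\phi_2}G_{h,c}$ case is symmetric, with the $\phi_1$- and $\phi_2$-gradient blocks exchanging roles and the $\varepsilon_3^2$ block again differentiated with the $-\psi$ rule. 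I expect the only genuine bookkeeping hurdle to be establishing the averaging adjoint identity $\langle a_x(g\,A_x\psi),1\rangle = \langle a_x(g),\psi\rangle$ (and its $y$-analogue) cleanly from the $N$-periodic index shifts; once that and Lemma~\ref{lemma1} are in hand, the rest is a mechanical application of the product rule.
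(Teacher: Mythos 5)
Your proposal is correct: the Gateaux-derivative computation, the adjoint identity $\langle a_x(g\,A_x\psi),1\rangle=\langle a_x(g),\psi\rangle$, the summation-by-parts identity $\langle a_x(g\,D_x\psi),1\rangle=-\langle d_x(g),\psi\rangle$ from Lemma~\ref{lemma1}, and the sign flip from differentiating $1-\phi_1-\phi_2$ together reproduce every term of the stated formula. The paper itself does not prove this proposition but defers to the cited first-order work, and your argument is exactly the standard computation that reference carries out, so there is nothing to add.
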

	\begin{lem}\label{lemma2.2}
	Suppose that $\vec{\phi},\vec{\psi}\in \vec{\mathcal{C}}_{\rm per}^{\mathcal{G}}$. Consider the canonical convex splitting of the energy $G_h(\vec{\phi})$ in~\eqref{Full-discrete-energy} into $G_h=G_{h,c}-G_{h,e}$ given by~\eqref{Discrete-energy-c}-\eqref{Discrete-energy-e}. The following inequality is available	
	\begin{align}
		G_h(\vec{\phi})-G_h(\vec{\psi}) &\le \langle \delta_{\phi_1} G_{h,c}(\vec{\phi})-\delta_{\phi_1} G_{h,e}(\vec{\psi}),\phi_1-\psi_1\rangle
		\nonumber
		\\
		& \quad +\langle \delta_{\phi_2} G_{h,c}(\vec{\phi})-\delta_{\phi_2} G_{h,e}(\vec{\psi}),\phi_2-\psi_2\rangle.
		\nonumber
	\end{align}
\end{lem}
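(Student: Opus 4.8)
The plan is to reduce the estimate to the elementary first-order characterization of convexity, exploiting the convex-concave decomposition already recorded in Lemma~\ref{Full-discrete-energy-splitting}. First I would write the energy difference as
\[
G_h(\vec{\phi}) - G_h(\vec{\psi}) = \bigl( G_{h,c}(\vec{\phi}) - G_{h,c}(\vec{\psi})\bigr) - \bigl( G_{h,e}(\vec{\phi}) - G_{h,e}(\vec{\psi})\bigr),
\]
and treat the two brackets separately, using that \emph{both} $G_{h,c}$ and $G_{h,e}$ are convex functionals of $(\phi_1,\phi_2)$ on $\vec{\mathcal{C}}_{\rm per}^{\mathcal{G}}$: the former because $S$ collects the logarithmic reticular terms (each of $\phi\ln\phi$ type, hence convex) together with the de~Gennes-type contributions whose building block $(\phi,p)\mapsto p^2/(36\phi)$ is jointly convex on $\{\phi>0\}$ and is composed with the linear grid operators $A_x, D_x$ (and their $y$-analogues); the latter because the sign condition $4\chi_{13}\chi_{23}-(\chi_{12}-\chi_{13}-\chi_{23})^2>0$ makes $H$ concave, so that $G_{h,e}=-\langle H,1\rangle$ is convex. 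Both facts are precisely the content of Lemma~\ref{Full-discrete-energy-splitting} (established in~\cite{Dong2020b}), so I would simply invoke it rather than re-derive it.

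For the convex part I would apply the gradient inequality expanded about $\vec{\phi}$: for a convex $F$ and admissible $\vec{\phi},\vec{\psi}$ one has $F(\vec{\psi}) \ge F(\vec{\phi}) + \langle \delta_{\phi_1}F(\vec{\phi}),\psi_1-\phi_1\rangle + \langle \delta_{\phi_2}F(\vec{\phi}),\psi_2-\phi_2\rangle$, which rearranges to
\[
F(\vec{\phi})-F(\vec{\psi})\ \le\ \langle \delta_{\phi_1}F(\vec{\phi}),\phi_1-\psi_1\rangle + \langle \delta_{\phi_2}F(\vec{\phi}),\phi_2-\psi_2\rangle ;
\]
taking $F=G_{h,c}$ yields exactly the $G_{h,c}(\vec{\phi})$-derivative terms appearing in the statement. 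For the concave contribution I would instead apply the gradient inequality for the convex $G_{h,e}$ expanded about $\vec{\psi}$, namely $G_{h,e}(\vec{\phi}) \ge G_{h,e}(\vec{\psi}) + \langle \delta_{\phi_1}G_{h,e}(\vec{\psi}),\phi_1-\psi_1\rangle + \langle \delta_{\phi_2}G_{h,e}(\vec{\psi}),\phi_2-\psi_2\rangle$, so that
\[
-\bigl(G_{h,e}(\vec{\phi})-G_{h,e}(\vec{\psi})\bigr)\ \le\ -\langle \delta_{\phi_1}G_{h,e}(\vec{\psi}),\phi_1-\psi_1\rangle - \langle \delta_{\phi_2}G_{h,e}(\vec{\psi}),\phi_2-\psi_2\rangle .
\]
Adding the two estimates and collecting inner products by $\phi_1-\psi_1$ and $\phi_2-\psi_2$ gives precisely the claimed inequality, with the mixed evaluation $\delta_{\phi_i}G_{h,c}(\vec{\phi})-\delta_{\phi_i}G_{h,e}(\vec{\psi})$ on the right-hand side. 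Along the way I would note that the segment joining $\vec{\phi}$ and $\vec{\psi}$ stays in $\vec{\mathcal{C}}_{\rm per}^{\mathcal{G}}$ since the Gibbs triangle $\mathcal{G}$ is convex, which legitimizes the gradient inequalities, and that the $\delta_{\phi_i}G_{h,c}$, $\delta_{\phi_i}G_{h,e}$ here are exactly the grid functions computed in the Proposition above.

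I do not expect a genuine obstacle: the only subtlety is the \emph{direction} in which each gradient inequality is taken — expand the convex part $G_{h,c}$ about $\vec{\phi}$, but the convex functional $G_{h,e}$ about $\vec{\psi}$ — which is exactly what produces the implicit-explicit (convex-splitting) pairing tailored to the time discretization. The one substantive ingredient, the joint convexity of the singular de~Gennes integrand $p^2/(36\phi)$ and the consequent convexity of $G_{h,c}$, has already been dispatched in Lemma~\ref{Full-discrete-energy-splitting}, so the present lemma is essentially a corollary of that convex-concave decomposition.
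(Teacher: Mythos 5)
Your proof is correct and is exactly the standard convex-splitting argument: apply the gradient inequality for the convex part $G_{h,c}$ expanded about $\vec{\phi}$ and for the convex functional $G_{h,e}$ expanded about $\vec{\psi}$, then add. The paper itself does not reprove this lemma but defers to \cite{Dong2020b}, where the same canonical argument (resting on the convex-concave decomposition of Lemma~\ref{Full-discrete-energy-splitting}) is used, so your route coincides with the intended one.
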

Using the idea of the convex splitting and the backward differentiation formula, we consider the   following semi-implicit, fully discrete scheme: for $n\geq 1$, given $(\phi_1^n, \phi_2^n) \in \vec{\mathcal{C}}_{\rm per}^{\mathcal{G}}$, $(\phi_1^{n-1}, \phi_2^{n-1}) \in \vec{\mathcal{C}}_{\rm per}^{\mathcal{G}}$, find $(\phi_1^{n+1},\phi_2^{n+1}) \in \vec{\mathcal{C}}_{\rm per}^{\mathcal{G}}$ such that
\begin{small}
\begin{align}
	&\frac{3\phi_1^{n+1}-4\phi_1^n+\phi_1^{n-1}}{2\dt}  =  \mathcal{M}_1 \Dh \mu_1^{n+1}
	\label{Full-discrete-1},
	\\
	&\mu_1^{n+1} : =\delta_{\phi_1}G_{h,c}(\phi_1^{n+1},\phi_2^{n+1})- 
	\delta_{\phi_1}G_{h,e}(\hat{\phi}_1^n,\hat{\phi}_2^n) - A_1\dt\Dh(\phi_1^{n+1}-\phi_1^n)\nonumber
	\\
 &\quad \quad ~=  \frac{1}{M_0} \ln \frac{\alpha \phi_1^{n+1}}{M_0} - \ln(1-\phi_1^{n+1} -\phi_2^{n+1}) - 2 \chi_{13} \hat{\phi}_1^n +(\chi_{12}-\chi_{13}-\chi_{23}) \hat{\phi}_2^n  \nonumber
\\
& \quad \quad \quad ~
- \frac{\varepsilon_1^2}{36} {\cal A}_h \Bigl( \frac{| \nabla_h \phi_1^{n+1} |^2}{ ( {\cal A}_h \phi_1^{n+1} )^2}  \Bigr)
- \frac{\varepsilon_1^2}{18} \nabla_h \cdot \Bigl( \frac{\nabla_h \phi_1^{n+1}}{{\cal A}_h \phi_1^{n+1} } \Bigr) - A_1\dt\Dh(\phi_1^{n+1}-\phi_1^n)\nonumber
\\
& \quad \quad \quad ~
+ \frac{\varepsilon_3^2}{36} {\cal A}_h \Bigl( \frac{| \nabla_h ( 1 - \phi_1^{n+1} - \phi_2^{n+1}) |^2}{ ( {\cal A}_h (1 - \phi_1^{n+1} - \phi_2^{n+1} ) )^2}  \Bigr)   
+ \frac{\varepsilon_3^2}{18} \nabla_h \cdot \Bigl( \frac{\nabla_h (1 - \phi_1^{n+1} - \phi_2^{n+1}) }{{\cal A}_h (1 - \phi_1^{n+1} - \phi_2^{n+1}) } \Bigr)  
	\label{Full-discrete-mu1} ,
	\\
&\frac{3\phi_2^{n+1}-4\phi_2^n+\phi_2^{n-1}}{2\dt}  =  \mathcal{M}_2 \Dh \mu_2^{n+1},
	\label{Full-discrete-2}
	\\
	&\mu_2^{n+1} :=\delta_{\phi_2}G_{h,c}(\phi_1^{n+1},\phi_2^{n+1})- \delta_{\phi_2}G_{h,e}(\hat{\phi}_1^n,\hat{\phi}_2^n)- A_2\dt\Dh(\phi_2^{n+1}-\phi_2^n)\nonumber
	\\
	& \quad \quad ~= \frac{1}{N_0} \ln \frac{\beta \phi_2^{n+1}}{N_0} - \ln(1-\phi_1^{n+1} - \phi_2^{n+1}) - 2 \chi_{23} \hat{\phi}_2^n +(\chi_{12}-\chi_{13}-\chi_{23})\hat{\phi}_1^n  \nonumber
	\\
	&\quad \quad \quad ~
	- \frac{\varepsilon_2^2}{36} {\cal A}_h \Bigl( \frac{| \nabla_h \phi_2^{n+1} |^2}{ ( {\cal A}_h \phi_2^{n+1} )^2}  \Bigr)
	- \frac{\varepsilon_2^2}{18} \nabla_h \cdot \Bigl( \frac{\nabla_h \phi_2^{n+1}}{{\cal A}_h \phi_2^{n+1} } \Bigr) - A_2\dt\Dh(\phi_2^{n+1}-\phi_2^n)\nonumber
	\\
	&\quad \quad \quad ~
	+ \frac{\varepsilon_3^2}{36} {\cal A}_h \Bigl( \frac{| \nabla_h ( 1 - \phi_1^{n+1} - \phi_2^{n+1}) |^2}{ ( {\cal A}_h (1 - \phi_1^{n+1} - \phi_2^{n+1} ) )^2}  \Bigr)   
	+ \frac{\varepsilon_3^2}{18} \nabla_h \cdot \Bigl( \frac{\nabla_h (1 - \phi_1^{n+1} - \phi_2^{n+1}) }{{\cal A}_h (1 - \phi_1^{n+1} - \phi_2^{n+1}) } \Bigr) ,	\label{Full-discrete-mu2}
\end{align}
\end{small}
where
\begin{align*}
	{\cal A}_h \left( \frac{| \nabla_h u |^2}{ ( {\cal A}_h u )^2}  \right)
	& :=  a_x \left( \frac{| D_x u |^2 }{ (A_x u)^2} \right)
	+ a_y \left( \frac{| D_y u |^2}{ (A_y u)^2}  \right)  ,
	\\
	\nabla_h \cdot \left( \frac{\nabla_h u}{{\cal A}_h u }\right)
	& := d_x \left( \frac{D_x u}{A_x u} \right) + d_y \left( \frac{D_y u}{A_y u} \right)  ,
\end{align*}
for all $u\in\mathcal{C}_{\rm per}$, provided $u$ does not vanish at any grid points. And $\hat{u}^n:=2u^n-u^{n-1}$.

The initialization step comes from a combination of convex splitting and a second order numerical correction:
\begin{small}
     \begin{align}
		&\frac{\phi_1^1-\phi_1^0}{\dt}  =  \mathcal{M}_1 \Dh \mu_1^1
		\nonumber,
		\\
		&\mu_1^1 : =\frac{1}{2}\left(\delta_{\phi_1}G_{h,c}(\phi_1^1,\phi_2^1)+
		\delta_{\phi_1}G_{h,c}(\phi_1^0,\phi_2^0)\right) + \frac{\partial}{\partial \phi_1} H(\phi_1^0,\phi_2^0) + \frac{\dt}{2}\frac{\partial^2}{\partial \phi_1^2} H(\phi_1^0,\phi_2^0)(\phi_1)^0_t\nonumber,
		\\
		&\frac{\phi_2^1-\phi_2^0}{\dt}  =  \mathcal{M}_2 \Dh \mu_2^1,
		\nonumber
		\\
		&\mu_2^1 :=\frac{1}{2}\left(\delta_{\phi_2}G_{h,c}(\phi_1^1,\phi_2^1)+ \delta_{\phi_2}G_{h,c}(\phi_1^0,\phi_2^0)\right) + \frac{\partial}{\partial \phi_2} H(\phi_1^0,\phi_2^0)+ \frac{\dt}{2}\frac{\partial^2}{\partial \phi_2^2} H(\phi_1^0,\phi_2^0)(\phi_2)^0_t.\label{scheme-CH_initial}
	\end{align}
\end{small}
The local truncation error of this initialization step is second order, which matches the overall second-order accuracy of the scheme and is consistent with the high order consistency analysis, as will be shown in later sections. In addition, this initialization step method satisfies the positivity-preserving property and energy stability. 

\begin{rem} 
The construction of a second order accurate, positivity-preserving and energy stable numerical scheme for the ternary MMC system turns out to be more challenging than the first order accurate algorithm~\cite{Dong2020b, dong2022}. Because of the complicated structure of the nonlinear and singular surface diffusion energy, as well as its functional derivatives, a Crank-Nicolson style approximation could hardly ensure both the positivity-preserving and energy stability properties. In turn, such a numerical effort has to be focused on the BDF style approach. With the BDF2 approximation, the nonlinear and singular terms could be treated in a similar manner as in the first order numerical method, while the computation of the concave and expansive terms becomes more tricky. Because of the negative eigenvalues in the concave expansive terms, an explicit treatment is necessary for the sake of both the unique solvability and energy stability. In the first order numerical method, an explicit treatment to the concave terms is able to ensure a dissipation of the associated energy; however, a direct application of second order Adams-Bashforth extrapolation for the concave terms would not enforce such an energy stability at a theoretical level. To remedy this numerical effort, we have to add artificial regularization terms, for both $\phi_1$ and $\phi_2$, to establish such a theoretical analysis of energy stability, as will be demonstrated in the later section. Moreover, since a multi-step approach is applied in the second order accurate scheme, the initialization step turns out to be more challenging, and a careful computation in the initial step, as given by~\eqref{scheme-CH_initial}, is needed to ensure the theoretical properties at the initial time step. 
\end{rem} 

	\section{Unique solvability and positivity-preserving property}
\label{sec:positivity-preserving property}
\setcounter{equation}{0}

The proof of the following lemma can be found in~\cite{chen19b}.
\begin{lem}
	\label{MMC-positivity-Lem-0}  
	Suppose that $\phi_1$, $\phi_2 \in \mathcal{C}_{\rm per}$, with $\langle\phi_1 - \phi_2,1\rangle = 0$, that is, $\phi_1 - \phi_2\in \mathring{\mathcal{C}}_{\rm per}$, and assume that $\nrm{\phi_1}_\infty < 1$, $\nrm{\phi_2}_\infty \le M$. Then,   we have the following estimate:
	\[
	\nrm{(-\Delta_h)^{-1} (\phi_1 - \phi_2)}_\infty \le C_1 ,
	\]
	where $C_1>0$ depends only upon $M$ and $\Omega$. In particular, $C_1$ is independent of the mesh size $h$.
\end{lem}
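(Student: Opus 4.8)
The plan is to reduce the estimate for $\nrm{(-\Delta_h)^{-1}(\phi_1-\phi_2)}_\infty$ to a discrete elliptic regularity bound, using the hypothesis that $\phi_1-\phi_2$ is mean-zero so that $\psi := (-\Delta_h)^{-1}(\phi_1-\phi_2)\in\mathring{\mathcal C}_{\rm per}$ is well-defined. First I would set $f := \phi_1-\phi_2$ and observe that, by the triangle inequality together with $\nrm{\phi_1}_\infty<1$ and $\nrm{\phi_2}_\infty\le M$, we have a uniform bound $\nrm{f}_\infty \le 1+M=:M'$; in particular $\nrm{f}_2 \le M'|\Omega|^{1/2}$ and also $\nrm{f}_p\le C(M',\Omega)$ for any fixed $p$. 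The task is then to show that the solution $\psi$ of $-\Delta_h\psi = f$, $\overline{\psi}=0$, satisfies $\nrm{\psi}_\infty\le C_1(M',\Omega)$ with $C_1$ independent of $h$.

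The key step is a discrete Sobolev / elliptic regularity argument. One clean route is via the discrete Fourier transform on the $N^2$-periodic grid: expanding $f$ and $\psi$ in the discrete Fourier basis, one has $\hat\psi_{\bf k} = \hat f_{\bf k}/\lambda_{\bf k}$ where $\lambda_{\bf k}$ are the (nonnegative) eigenvalues of $-\Delta_h$, with $\lambda_{\bf 0}=0$ killed by the mean-zero condition and $\lambda_{\bf k}\ge c\,|{\bf k}|^2$ (with the usual caveat that $|{\bf k}|$ ranges over the periodic cell, i.e. $\lambda_{\bf k}\asymp \frac{4}{h^2}\sin^2(\pi k_x h/L)+\cdots$). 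Then $\nrm{\psi}_\infty \le \sum_{{\bf k}\ne 0} |\hat f_{\bf k}|/\lambda_{\bf k}$, and applying Cauchy–Schwarz one bounds this by $\nrm{f}_2 \cdot \big(\sum_{{\bf k}\ne 0}\lambda_{\bf k}^{-2}\big)^{1/2}$; the point is that $\sum_{{\bf k}\ne0}\lambda_{\bf k}^{-2}\lesssim \sum_{{\bf k}\ne0}|{\bf k}|^{-4}$, which converges in two dimensions uniformly in $h$ (the sum is over at most $N^2$ modes but is dominated by the convergent infinite series $\sum_{{\bf k}\in\mathbb Z^2\setminus\{0\}}|{\bf k}|^{-4}$). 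This gives $\nrm{\psi}_\infty \le C(\Omega)\,\nrm{f}_2 \le C_1(M',\Omega)$, as desired. Alternatively one can argue through a discrete $H_h^2$ regularity estimate $\nrm{\psi}_{H_h^2}\le C\nrm{f}_2$ followed by a discrete Sobolev embedding $\nrm{\psi}_\infty \le C\nrm{\psi}_{H_h^2}$ in 2D — but since the reference \cite{chen19b} is cited, I would lean on the Fourier computation, which is the most transparent and makes the $h$-independence manifest.

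The main obstacle is precisely ensuring $h$-independence: a naive $\ell^2\to\ell^\infty$ bound on the grid costs a factor like $h^{-1}$ (the discrete Sobolev embedding $\nrm{\psi}_\infty\lesssim h^{-1}\nrm{\psi}_2$ is sharp in general), so the argument must genuinely exploit the two derivatives gained from inverting the Laplacian, and must use that the relevant Fourier-side sum $\sum |{\bf k}|^{-4}$ converges in $d=2$. (In $d\ge 4$ this particular route would fail and one would need $\nrm{f}_p$ for $p>1$ rather than just $\nrm f_2$, which is also available here.) A secondary technical point is the low-frequency behavior of $\lambda_{\bf k}$: one needs the uniform spectral gap estimate $\lambda_{\bf k}\ge c_\Omega\,|{\bf k}|^2$ valid for all admissible nonzero wavenumbers, which follows from $\frac{2}{\pi}\theta \le \sin\theta$ on $[0,\pi/2]$ applied to $\sin^2(\pi k h/L)$ — a standard but essential estimate that guarantees the constant $c_\Omega$ depends only on $L$ (hence on $\Omega$) and not on $N$.
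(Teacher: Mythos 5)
Your argument is correct, and it is essentially the argument behind the paper's proof: the paper does not prove this lemma itself but defers to \cite{chen19b}, where the bound is obtained exactly as you describe, via $\nrm{\phi_1-\phi_2}_2 \le (1+M)|\Omega|^{1/2}$ together with an $h$-uniform discrete elliptic-regularity/Sobolev estimate $\nrm{(-\Delta_h)^{-1}f}_\infty \le C(\Omega)\nrm{f}_2$ for mean-zero $f$, which is established by the discrete Fourier computation (the convergence of $\sum_{{\bf k}\ne 0}|{\bf k}|^{-4}$ in 2D and the spectral lower bound $\lambda_{\bf k}\ge c_\Omega|{\bf k}|^2$ from $\sin\theta\ge 2\theta/\pi$). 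Your identification of the $h$-independence issue and of the low-frequency bound as the two essential points is exactly right.
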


In fact, in the ternary MMC model, all the phase variables have to stay within $(0, 1)$, due to the positivity-preserving property, i.e., $0 < \phi_1 , \, \phi_2 , \, 1 - \phi_1 - \phi_2 < 1$, at a point-wise level. Therefore, we could take $M=1$ to justify an application of this lemma, and appropriate functional space could be set to enforce such a point-wise bound. The following theorem is the main result of this section.
	\begin{theorem}
	\label{MMC-positivity}
	Given $(\phi_1^k,\phi_2^k)\in\vec{\mathcal{C}}_{\rm per}^{\mathcal{G}},\, k=n-1,n$, and $(\overline{\phi_1^n}, \overline{\phi_2^n}), (\overline{\phi_1^{n-1}}, \overline{\phi_2^{n-1}})\in\mathcal{G}$, then there exists a unique
	solution $(\phi_1^{n+1},\phi_2^{n+1})\in\vec{\mathcal{C}}_{\rm per}^{\mathcal{G}}$ to
	\eqref{Full-discrete-1}-\eqref{Full-discrete-mu2}, with $\overline{\phi_1^n} =
	\overline{\phi_1^{n+1}}$ and $\overline{\phi_2^n} = \overline{\phi_2^{n+1}}$.
\end{theorem}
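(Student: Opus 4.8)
The plan is to recast the numerical scheme \eqref{Full-discrete-1}--\eqref{Full-discrete-mu2} as the Euler--Lagrange equation of a strictly convex optimization problem over an appropriate convex admissible set, and then argue that the minimizer cannot lie on the boundary of that set. First I would fix the mean values: since applying $\langle \, \cdot\, , 1\rangle$ to \eqref{Full-discrete-1} and \eqref{Full-discrete-2} and using $\langle \Dh(\cdot),1\rangle = 0$ forces $\overline{\phi_1^{n+1}} = \overline{\phi_1^n}$ and $\overline{\phi_2^{n+1}} = \overline{\phi_2^n}$ (the stated mass conservation), it is natural to introduce $\psi_i := \phi_i - \overline{\phi_i^n} \in \mathring{\mathcal C}_{\rm per}$ and work with the operator $\mathcal L = -\Dh$, invertible on $\mathring{\mathcal C}_{\rm per}$, together with the $\langle\cdot,\cdot\rangle_{-1,h}$ inner product from Lemma~\ref{lemma1}. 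I would then define, on the open admissible set
\[
\mathcal{A}_{h} := \left\{ (\phi_1,\phi_2)\in \vec{\mathcal{C}}_{\rm per}^{\mathcal{G}} \ \middle| \ \overline{\phi_i} = \overline{\phi_i^n}, \ i=1,2 \right\},
\]
the functional
\begin{align*}
	\mathcal{J}^{n+1}(\phi_1,\phi_2) &= \frac{3}{4\dt \mathcal{M}_1}\nrm{\phi_1 - \frac{4}{3}\phi_1^n + \frac13 \phi_1^{n-1}}_{-1,h}^2
	+ \frac{3}{4\dt \mathcal{M}_2}\nrm{\phi_2 - \frac{4}{3}\phi_2^n + \frac13 \phi_2^{n-1}}_{-1,h}^2
	\\
	& \quad + G_{h,c}(\phi_1,\phi_2) + \frac{A_1 \dt}{2}\nrm{\nabla_h \phi_1}_2^2 + \frac{A_2 \dt}{2}\nrm{\nabla_h \phi_2}_2^2
	\\
	& \quad - \langle \delta_{\phi_1} G_{h,e}(\h\phi_1^n,\h\phi_2^n) - A_1\dt\Dh \phi_1^n, \phi_1\rangle
	- \langle \delta_{\phi_2} G_{h,e}(\h\phi_1^n,\h\phi_2^n) - A_2\dt\Dh \phi_2^n, \phi_2\rangle,
\end{align*}
and verify by a direct variational computation that a critical point of $\mathcal{J}^{n+1}$ over $\mathcal{A}_h$ solves \eqref{Full-discrete-1}--\eqref{Full-discrete-mu2}; here the point is that $\langle \Dh(\phi_i - \phi_i^n),\phi_i\rangle = -\langle \nabla_h(\phi_i-\phi_i^n),\nabla_h\phi_i\rangle$ produces the regularization term, and differentiating the $\nrm{\cdot}_{-1,h}^2$ terms produces $\mathcal L^{-1}$ applied to the BDF2 difference, which matches \eqref{Full-discrete-1} after applying $-\mathcal L = \Dh$.

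Next I would establish strict convexity: the $\nrm{\cdot}_{-1,h}^2$ and $\nrm{\nabla_h\cdot}_2^2$ pieces are convex (the latter only positive semidefinite, but it does not hurt), the term $\langle S(\phi_1,\phi_2),1\rangle$ is strictly convex on $\mathcal{G}$ pointwise (standard for the Flory--Huggins-type entropy, with Hessian dominated by the logarithmic diagonal entries), and the deGennes surface-diffusion contributions to $G_{h,c}$ are convex by Lemma~\ref{Full-discrete-energy-splitting}; the explicit/linear terms are affine. Hence $\mathcal{J}^{n+1}$ is strictly convex on the convex set $\overline{\mathcal{A}_h}$ (the closure allowing $\phi_i=0$, $\phi_i+\phi_j=1$ somewhere), so a minimizer exists on the compact closure and is unique; uniqueness of the scheme's solution will follow once we know the minimizer is interior.

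The heart of the argument — and the main obstacle — is the boundary exclusion: showing the minimizer $(\phi_1^\star,\phi_2^\star)$ of $\mathcal{J}^{n+1}$ over $\overline{\mathcal{A}_h}$ cannot have $\phi_1^\star$, $\phi_2^\star$, or $1-\phi_1^\star-\phi_2^\star$ vanishing at any grid point. I would argue by contradiction following the technique of~\cite{chen19b, Dong2020b}: suppose, say, $\phi_1^\star$ attains value $0$ (or a minimum approaching $0$) at some grid point $\vec{i}_0$, and let $\vec{i}_1$ be a point where $\phi_1^\star$ attains its maximum (which is bounded away from $0$ since $\overline{\phi_1^n}>0$ is fixed). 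Consider the perturbation direction $d_s$ which adds $s$ at $\vec{i}_0$ and subtracts $s$ at $\vec{i}_1$ (mean-zero, admissible for small $s>0$), and compute $\frac{d}{ds}\mathcal{J}^{n+1}(\phi_1^\star + s d_s^{(1)}, \phi_2^\star)\big|_{s=0^+}$. The logarithmic term $\frac{1}{M_0}\ln\frac{\alpha\phi_1}{M_0}$ contributes $\frac{1}{M_0}\ln\frac{\alpha\phi_1^\star(\vec{i}_0)}{M_0} \to -\infty$ as $\phi_1^\star(\vec{i}_0)\to 0$, while the singular deGennes terms — the crucial technical wrinkle here compared to the ordinary Cahn--Hilliard case — must be shown to be either bounded below or likewise $-\infty$-divergent in the "good" direction near the vanishing point, using the convexity and sign structure of $\kappa(\phi)=\frac{1}{36\phi}$ and $\kappa'(\phi)=-\frac{1}{36\phi^2}$; the remaining contributions ($\nrm{\cdot}_{-1,h}^2$ gradient-of-inverse-Laplacian terms, the $\nrm{\nabla_h\cdot}_2^2$ regularization, and the bounded linear terms) are all $O(1)$ uniformly, the $\nrm{\cdot}_{-1,h}^2$ bound requiring Lemma~\ref{MMC-positivity-Lem-0} with $M=1$. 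Since the logarithmic singularity beats every $O(1)$ term, the directional derivative is $-\infty$, contradicting minimality; one argues analogously for $\phi_2^\star\to 0$ and, using the $-\ln(1-\phi_1-\phi_2)$ term, for $1-\phi_1^\star-\phi_2^\star\to 0$. The delicate bookkeeping is in confirming that the artificial regularization $A_i\dt\Dh(\phi_i^{n+1}-\phi_i^n)$ and the second-order-explicit concave terms $\delta_{\phi_i}G_{h,e}(\h\phi_1^n,\h\phi_2^n)$ — which involve $\h\phi_i^n = 2\phi_i^n - \phi_i^{n-1}$ and hence need not themselves lie in $\mathcal{G}$ — contribute only bounded quantities, so the singular logarithm retains its dominance; that is precisely the point flagged in the remark, that "the logarithmic function always changes faster than the linear function." Once interiority is established, $(\phi_1^\star,\phi_2^\star)\in\vec{\mathcal{C}}_{\rm per}^{\mathcal{G}}$ is the unique critical point, hence the unique solution of \eqref{Full-discrete-1}--\eqref{Full-discrete-mu2} with the asserted mean values, completing the proof.
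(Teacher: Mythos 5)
Your proposal follows essentially the same route as the paper: the scheme is identified as the Euler--Lagrange equation of a strictly convex functional (your $\mathcal{J}^{n+1}$ agrees with the paper's $\mathcal{J}_h^n$ up to an additive constant) over the mass-constrained admissible set, and positivity is obtained by showing that a minimizer cannot sit on the boundary of the Gibbs triangle, using the mean-zero two-point perturbation $\psi=\delta_{\vec{i}_0}-\delta_{\vec{i}_1}$ with $\vec{i}_1$ a max/min point controlled by the fixed mean value, and the fact that the logarithmic singularity dominates every other contribution for fixed $\dt$, $h$.

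The one place where your variant is more delicate than the paper's is the handling of the boundary. You minimize over the full closure of the admissible set and argue that the one-sided directional derivative is $-\infty$; the paper instead minimizes over the truncated compact set $A_{h,\delta}$ (with $\phi_i\ge g(\delta)$ and $\delta\le\phi_1+\phi_2\le 1-\delta$), on which the functional is smooth, shows the directional derivative is \emph{finite but of the wrong sign}, and only then chooses $g(\delta)$ and $\delta$ small enough. The truncation buys two things you would otherwise need to handle by hand: (i) it excludes degenerate corners of the closed simplex where several constraints are active at once (e.g.\ $\phi_1^\star(\vec{i}_0)=0$ together with $\phi_1^\star(\vec{i}_0)+\phi_2^\star(\vec{i}_0)=1$, in which case your perturbation direction exits the admissible set and the $-\ln(1-\phi_1-\phi_2)$ term is not even defined along it); and (ii) it keeps the singular deGennes terms smooth, so the explicit bounds $\left|Q_\ell(\vec{\alpha}_0)-Q_\ell(\vec{\alpha}_1)\right|\le C\varepsilon^2 h^{-2}$ obtained from $\left|\frac{a-b}{a+b}\right|<1$ can be read off directly. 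Relatedly, these gradient contributions and the regularization term are $O(h^{-2})$ and $O(\dt\, h^{-2})$, not ``$O(1)$ uniformly'' as you state; this does not damage the argument (they are constants for fixed mesh, which is all that is needed against the logarithm), but it is exactly why the paper's separation constants $g(\delta)$, $\delta$ degenerate as $\dt, h\to 0$. With these points repaired, your argument coincides with the paper's proof.
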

\subsection{The equivalent form of solving \eqref{Full-discrete-1}-\eqref{Full-discrete-mu2}}

For bookkeeping, we introduce the following notation:
\[
\delta_{\phi_1}G_{h,c}(\phi_1,\phi_2) = \sum_{\ell = 1}^9 Q_\ell(\phi_1,\phi_2),	
\]
where
\begin{align*}
	Q_1(\phi_1,\phi_2) & := \frac{\partial}{\partial \phi_1} S(\phi_1,\phi_2),
	\\
	Q_2(\phi_1,\phi_2) &:= \varepsilon_1^2 a_x(\kappa'(A_x\phi_1)(D_x\phi_1)^2),
	\\
	Q_3(\phi_1,\phi_2) &:= -2\varepsilon_1^2 d_x(\kappa(A_x\phi_1) D_x\phi_1),
	\\
	Q_4(\phi_1,\phi_2) & := \varepsilon_1^2 a_y(\kappa'(A_y\phi_1)(D_y\phi_1)^2),
	\\
	Q_5(\phi_1,\phi_2) & := -2\varepsilon_1^2 d_y(\kappa(A_y\phi_1) D_y\phi_1) ,
	\\
	Q_6(\phi_1,\phi_2) & :=-\varepsilon_3^2 a_x(\kappa'(A_x(1-\phi_1-\phi_2))(D_x(1-\phi_1-\phi_2))^2),
	\\
	Q_7(\phi_1,\phi_2) & :=  2\varepsilon_3^2 d_x(\kappa(A_x(1-\phi_1-\phi_2)) D_x(1-\phi_1-\phi_2) ),
	\\
	Q_8(\phi_1,\phi_2) & := -\varepsilon_3^2 a_y(\kappa'(A_y(1-\phi_1-\phi_2))(D_y(1-\phi_1-\phi_2))^2) ,
	\\
	Q_9(\phi_1,\phi_2) & := 2\varepsilon_3^2 d_y(\kappa(A_y(1-\phi_1-\phi_2)) D_y(1-\phi_1-\phi_2) ).
\end{align*}
The numerical solution of \eqref{Full-discrete-1}-\eqref{Full-discrete-mu2} is a minimizer of the following discrete energy functional:
\begin{align*}
	\mathcal{J}_h^n(\phi_1,\phi_2) & = \frac{1}{12\mathcal{M}_1\dt}\|3\phi_1-4\phi_1^n+\phi_1^{n-1}\|_{-1,h}^2+\frac{1}{12\mathcal{M}_2\dt} \|3\phi_2-4\phi_2^n+\phi_2^{n-1}\|_{-1,h}^2		\\
	& \quad+\langle S(\phi_1,\phi_2) ,1\rangle
	+\langle a_x(\kappa(A_x\phi_1)(D_x\phi_1)^2) +a_y(\kappa(A_y\phi_1)(D_y\phi_1)^2),\varepsilon_1^2\rangle
	\\
	& \quad  + \langle a_x(\kappa(A_x\phi_2)(D_x\phi_2)^2) +a_y(\kappa(A_y\phi_2)(D_y\phi_2)^2),\varepsilon_2^2\rangle
	\\
	& \quad  + \langle a_x(\kappa(A_x(1-\phi_1-\phi_2))(D_x(1-\phi_1-\phi_2))^2)
	\\
	& \quad\quad +a_y(\kappa(A_y(1-\phi_1-\phi_2))(D_y(1-\phi_1-\phi_2))^2) , \varepsilon_3^2 \rangle
	\\
	& \quad +\langle \frac{\partial}{\partial \phi_1} H(\hat{\phi}_1^n,\hat{\phi}_2^n),\phi_1\rangle +\langle \frac{\partial}{\partial \phi_2} H(\hat{\phi}_1^n,\hat{\phi}_2^n),\phi_2\rangle
	\\
	& \quad
	+\frac{A_1\dt}{2}\|\nabla_h(\phi_1-\phi_1^n)\|_2^2+\frac{A_2\dt}{2}\|\nabla_h(\phi_2-\phi_2^n)\|_2^2,
\end{align*}
over the admissible set
\[
A_h := \left\{ (\phi_1,\phi_2) \in \vec{\mathcal{C}}_{\rm per}^{\mathcal{G}} \ \middle| \  \langle\phi_1,1\rangle = |\Omega| \overline{\phi_1^0}, \quad \langle\phi_2,1\rangle= |\Omega| \overline{\phi_2^0} \right\} \subset \mathbb{R}^{2N^2}.
\]
It is clear that $\mathcal{J}_h^n$ is a strictly convex functional.
 
\subsection{Proof by contradiction}

Now, consider the following closed domain:
\begin{align*}
	A_{h,\delta} & := \Bigl\{  (\phi_1,\phi_2) \in \mathcal{C}_{\rm per}\times \mathcal{C}_{\rm per} \ \Big| \ \phi_1,\phi_2 \ge g(\delta),~ \delta \le \phi_1+\phi_2 \le 1-\delta, \Bigr.
	\\
	& \Bigl. \hspace{1.75in} \langle\phi_1,1\rangle = |\Omega| \overline{\phi_1^0}, ~  \langle\phi_2,1\rangle= |\Omega| \overline{\phi_2^0} \Bigr\} \subset \mathbb{R}^{2N^2} ,
\end{align*}
where $g (\delta) >0$ will be given later.
Define the hyperplane
\begin{equation*}
	V := \left\{ (\phi_1,\phi_2) \ \middle| \  \langle\phi_1,1\rangle = |\Omega| \overline{\phi_1^0}, ~  \langle\phi_2,1\rangle= |\Omega| \overline{\phi_2^0}\right\} \subset \mathbb{R}^{2N^2}.
\end{equation*}
Since $A_{h,\delta}$ is a bounded, compact, and convex subset of $V$, there exists (not necessarily unique) a minimizer of $\mathcal{J}_h^n(\phi_1,\phi_2)$ over $A_{h,\delta}$. The key point of the positivity analysis is that, such a minimizer could not occur at a boundary point of $A_{h,\delta}$, if $\delta$ and $g(\delta)$ are sufficiently small.
Assume the minimizer of $\mathcal{J}_h^n(\phi_1,\phi_2)$ over $A_{h,\delta}$ occurs at a boundary point of $A_{h,\delta}$.
\subsubsection{The minimizer $(\phi_1^{\star},\phi_2^{\star})\in A_{h,\delta}$ could not occur at $\phi_1^{\star}=g(\delta)$.}\label{subsubsection3.2.1}

We suppose the minimizer $(\phi_1^{\star},\phi_2^{\star})\in A_{h,\delta}$, satisfies $(\phi_1^{\star})_{\vec{\alpha}_0}=g(\delta)$, for some grid point $\vec{\alpha}_0:=(i_0,j_0)$. Assume that $\phi_1^{\star}$ reaches its maximum value at the grid point $\vec{\alpha}_1:=(i_1,j_1)$.  It is obvious that $(\phi_1^{\star})_{\vec{\alpha}_1}\geq \overline{\phi_1^{\star}}=\overline{\phi_1^0}$. 
A careful calculation gives the following directional derivative
	\begin{align*}
		&d_s \mathcal{J}_h^n(\phi_1^\star+s\psi,\phi_2^\star)|_{s=0} \nonumber\\
		=&\frac{1}{2\mathcal{M}_1\Delta
			t}\langle (-\Delta_h)^{-1}\left(3\phi_1^\star-4\phi_1^n+\phi_1^{n-1} \right),\psi\rangle - A_1\Delta t\langle \Delta_h(\phi_1^\star-\phi_1^n),\psi\rangle
		\\
		&\quad+ \langle \delta_{\phi_1}G_{h,c}(\phi_1^{\star},\phi_2^{\star}),\psi\rangle+
		\langle \frac{\partial}{\partial \phi_1}H(\hat{\phi}_1^n,\hat{\phi}_2^n),\psi \rangle,
\end{align*}
for any $\psi \in \mathring{\mathcal{C}}_{\rm per }$. Let us pick the direction
\[
\psi_{i,j} = \delta_{i,i_0}\delta_{j,j_0} - \delta_{i,i_1}\delta_{j,j_1},
\]
where $\delta_{i,j}$ is the Dirac delta function. Note that $\psi$ is of mean zero. The derivative may be expressed as
\begin{align}
	\frac{1}{h^2}d_s \mathcal{J}_h^n(\phi_1^\star+s\psi,\phi_2^\star)|_{s=0}	
	& = \frac{1}{2\mathcal{M}_1\Delta
		t}(-\Delta_h)^{-1}\left(3\phi_1^\star-4\phi_1^n+\phi_1^{n-1} \right)_{\vec{\alpha}_0}
	\label{MMC-Positive}
	\\
	& \quad  -
	\frac{1}{2\mathcal{M}_1\Delta
		t}(-\Delta_h)^{-1}\left(3\phi_1^\star-4\phi_1^n+\phi_1^{n-1} \right)_{\vec{\alpha}_1}
	\nonumber
	\\
	& \quad + Q_\ell(\phi_1^{\star},\phi_2^{\star})_{\vec{\alpha}_0}
	- Q_\ell(\phi_1^{\star},\phi_2^{\star})_{\vec{\alpha}_1}
	\nonumber
	\\
	& \quad +\frac{\partial}{\partial \phi_1}H(\hat{\phi}_1^n,\hat{\phi}_2^n)|_{\vec{\alpha}_0}
	-\frac{\partial}{\partial \phi_1}H(\hat{\phi}_1^n,\hat{\phi}_2^n)|_{\vec{\alpha}_1}
	\nonumber
	\\
	& \quad - A_1\Delta t\left(\Delta_h(\phi_1^\star-\phi_1^n)_{\vec{\alpha}_0}-\Delta_h(\phi_1^\star-\phi_1^n)_{\vec{\alpha}_1}\right).
	\nonumber
\end{align}
For the first and second terms appearing in the right hand side of \eqref{MMC-Positive}, we apply
Lemma \ref{MMC-positivity-Lem-0} and obtain
\begin{small}
	\begin{equation}
		- \frac{8C_1}{\mathcal{M}_1} \le
		\frac{1}{\mathcal{M}_1}(-\Delta_h)^{-1}\left(3\phi_1^\star-4\phi_1^n+\phi_1^{n-1} \right)_{\vec{\alpha}_0}
		-\frac{1}{\mathcal{M}_1}(-\Delta_h)^{-1}\left(3\phi_1^\star-4\phi_1^n+\phi_1^{n-1} \right)_{\vec{\alpha}_1}
		\le  \frac{8C_1}{\mathcal{M}_1} .
		\label{MMC-positive-1}
	\end{equation}
\end{small}
For the $Q_1$ terms, the following inequality is available:
	\begin{align}
		&Q_1(\phi_1^{\star},\phi_2^{\star})_{\vec{\alpha}_0} - Q_1(\phi_1^{\star},\phi_2^{\star})_{\vec{\alpha}_1} \nonumber\\
		=&  \frac{\partial}{\partial \phi_1} S(\phi_1^{\star},\phi_2^{\star})|_{\vec{\alpha}_0} - \frac{\partial}{\partial \phi_1} S(\phi_1^{\star},\phi_2^{\star})|_{\vec{\alpha}_1}
		\nonumber
		\\
		=&  \left(\frac{1}{M_0}\ln \frac{\alpha \phi_1^{\star}}{M_0}-  \ln(1-\phi_1^{\star}-\phi_2^{\star})\right)|_{\vec{\alpha}_0}	
		-  \left(\frac{1}{M_0}\ln \frac{\alpha \phi_1^{\star}}{M_0} -
		\ln(1-\phi_1^{\star}-\phi_2^{\star})\right)|_{\vec{\alpha}_1}
		\nonumber
		\\
		=&   \left( \ln
		\frac{(\phi_1^{\star})^{\nicefrac{1}{M_0}}}{1-\phi_1^{\star}-\phi_2^{\star}}\right)|_{\vec{\alpha}_0} - \left( \ln
		\frac{(\phi_1^{\star})^{\nicefrac{1}{M_0}}}{1-\phi_1^{\star}-\phi_2^{\star}}\right)|_{\vec{\alpha}_1}
		\nonumber
		\\
		\leq&    \ln \frac{(g(\delta))^{\nicefrac{1}{M_0}}}{\delta} -
		\ln \frac{(\overline{\phi_1^0})^{\nicefrac{1}{M_0}}}{1-\delta}
		\nonumber
		\\
		\leq&  \ln \frac{(g(\delta))^{\nicefrac{1}{M_0}}}{\delta} -
		\frac{1}{M_0}\ln \overline{\phi_1^0}.\label{MMC-positive-2}
\end{align}
Using the logarithm property $\ln(ab) = \ln a + \ln b$, we have eliminated the constant $\frac{1}{M_0}\ln \frac{\alpha}{M_0}$. The next-to-last step comes from the facts that $(\phi_1^{\star})_{\vec{\alpha}_0}=g(\delta)$, $(\phi_1^{\star})_{\vec{\alpha}_1}\geq \overline{\phi_1^0}$ and $\delta \le \phi_1+\phi_2 \le 1-\delta$. The last step comes from the inequality that $\ln (1-\delta) < 0$.
For the $Q_2$ terms, we have
\begin{align}
	&Q_2(\phi_1^{\star},\phi_2^{\star})_{\vec{\alpha}_0} - Q_2(\phi_1^{\star},\phi_2^{\star})_{\vec{\alpha}_1} \nonumber\\
	=&  \varepsilon_1^2 a_x(\kappa'(A_x\phi_1^{\star})(D_x\phi_1^{\star})^2)_{\vec{\alpha}_0}
	- \varepsilon_1^2 a_x(\kappa'(A_x\phi_1^{\star})(D_x\phi_1^{\star})^2)_{\vec{\alpha}_1}
	\nonumber
	\\
	\leq&  - \varepsilon_1^2 a_x(\kappa'(A_x\phi_1^{\star})(D_x\phi_1^{\star})^2)_{\vec{\alpha}_1}
	\nonumber
	\\
	\leq&  \frac{\varepsilon_1^2}{9h^2}.\label{MMC-positive-4}
\end{align}
The second step above comes from the fact that
\[
\varepsilon_1^2 a_x(\kappa'(A_x\phi_1^{\star})(D_x\phi_1^{\star})^2)_{\vec{\alpha}_0}\leq 0,
\]
since $\kappa'(\phi)= -\frac{1}{36\phi^2}<0$. The last step is based on the definitions of $\kappa'(\phi)$, $a_x$, $A_x$, and $D_x$,  as well as the fact that $|\frac{a-b}{a+b}|<1$, $\forall a>0, b>0$. In details, we observe the following expansion
\begin{align}
	- \varepsilon_1^2 a_x(\kappa'(A_x\phi_1^{\star})(D_x\phi_1^{\star})^2)_{\vec{\alpha}_1} & =
	\frac{\varepsilon_1^2}{18h^2}
	\left[\frac{(\phi_1^{\star})_{i_1+1,j_1}-(\phi_1^{\star})_{i_1,j_1}}{(\phi_1^{\star})_{i_1+1,j_1}+(\phi_1^{\star})_{i_1,j_1}}\right]^2
	\nonumber
	\\
	& \quad  + \frac{\varepsilon_1^2}{18h^2}
	\left[\frac{(\phi_1^{\star})_{i_1,j_1}-(\phi_1^{\star})_{i_1-1,j_1}}{(\phi_1^{\star})_{i_1-1,j_1}+(\phi_1^{\star})_{i_1,j_1}}\right]^2
	\nonumber
	\\
	& \leq \frac{\varepsilon_1^2}{9h^2} .
	\nonumber
\end{align}
The $Q_4$ terms can be similarly handled:
\begin{align}
	Q_4(\phi_1^{\star},\phi_2^{\star})_{\vec{\alpha}_0} - Q_4(\phi_1^{\star},\phi_2^{\star})_{\vec{\alpha}_1} & = \varepsilon_1^2 a_y(\kappa'(A_y\phi_1^{\star})(D_y\phi_1^{\star})^2)_{\vec{\alpha}_0}
	\label{MMC-positive-5}
	\\
	& \quad - \varepsilon_1^2 a_y(\kappa'(A_y\phi_1^{\star})(D_y\phi_1^{\star})^2)_{\vec{\alpha}_1}
	\nonumber
	\\
	& \leq \frac{\varepsilon_1^2}{9h^2} .
	\nonumber
\end{align}

For the $Q_3$ terms, we see that
\begin{align}
	Q_3(\phi_1^{\star},\phi_2^{\star})_{\vec{\alpha}_0} - Q_3(\phi_1^{\star},\phi_2^{\star})_{\vec{\alpha}_1}& = -2\varepsilon_1^2 d_x(\kappa(A_x\phi_1^{\star}) D_x\phi_1^{\star})_{\vec{\alpha}_0}
	\label{MMC-positive-6}
	\\
	& \quad + 2\varepsilon_1^2 d_x(\kappa(A_x\phi_1^{\star}) D_x\phi_1^{\star})_{\vec{\alpha}_1}
	\nonumber
	\\
	& \leq 0 ,
	\nonumber
\end{align}
in which the last step comes from the fact that $(D_x\phi_1^{\star})_{i_0-\nicefrac{1}{2},j_0} \leq 0, (D_x\phi_1^{\star})_{i_0+\nicefrac{1}{2},j_0} \geq 0, (D_x\phi_1^{\star})_{i_1-\nicefrac{1}{2},j_1} \geq 0$, and $(D_x\phi_1^{\star})_{i_1+\nicefrac{1}{2},j_1} \leq 0$. 

A bound for the $Q_5$ terms could be similarly derived:
\begin{align}
	Q_5(\phi_1^{\star},\phi_2^{\star})_{\vec{\alpha}_0} - Q_5(\phi_1^{\star},\phi_2^{\star})_{\vec{\alpha}_1} & = -2\varepsilon_1^2 d_y(\kappa(A_y\phi_1^{\star}) D_y\phi_1^{\star})_{\vec{\alpha}_0}
	\nonumber
	\\
	& \quad + 2\varepsilon_1^2 d_y(\kappa(A_y\phi_1^{\star}) D_y\phi_1^{\star})_{\vec{\alpha}_1}
	\nonumber
	\\
	& \leq 0 .\label{MMC-positive-7}
\end{align}

Use a technique similar to that used for $Q_2$, the $Q_6$ terms could be controlled as follows:
	\begin{align}
		&Q_6(\phi_1^{\star},\phi_2^{\star})_{\vec{\alpha}_0} - Q_6(\phi_1^{\star},\phi_2^{\star})_{\vec{\alpha}_1} \nonumber\\
		=&  -\varepsilon_3^2 a_x\left(\kappa'(A_x(1-\phi_1^{\star}-\phi_2^{\star}))(D_x(1-\phi_1^{\star}-\phi_2^{\star}))^2\right)_{\vec{\alpha}_0}
		\nonumber
		\\
		& + \varepsilon_3^2 a_x\left(\kappa'(A_x(1-\phi_1^{\star}-\phi_2^{\star}))(D_x(1-\phi_1^{\star}-\phi_2^{\star}))^2\right)_{\vec{\alpha}_1}
		\nonumber
		\\
		\leq&  -\varepsilon_3^2
		a_x\left(\kappa'(A_x(1-\phi_1^{\star}-\phi_2^{\star}))(D_x(1-\phi_1^{\star}-\phi_2^{\star}))^2\right)_{\vec{\alpha}_0}
		\nonumber
		\\
		\leq&   \frac{\varepsilon_3^2 }{9h^2} .\label{MMC-positive-8}
\end{align}
A similar inequality could be derived for the $Q_8$ terms:
\begin{align}
	&Q_8(\phi_1^{\star},\phi_2^{\star})_{\vec{\alpha}_0} - Q_8(\phi_1^{\star},\phi_2^{\star})_{\vec{\alpha}_1} \nonumber\\
	=& -\varepsilon_3^2
	a_y\left(\kappa'(A_y(1-\phi_1^{\star}-\phi_2^{\star}))(D_y(1-\phi_1^{\star}-\phi_2^{\star}))^2\right)_{\vec{\alpha}_0}
	\nonumber
	\\
	& + \varepsilon_3^2
	a_y\left(\kappa'(A_y(1-\phi_1^{\star}-\phi_2^{\star}))(D_y(1-\phi_1^{\star}-\phi_2^{\star}))^2\right)_{\vec{\alpha}_1}
	\nonumber
	\\
	\leq&  -\varepsilon_3^2
	a_y\left(\kappa'(A_y(1-\phi_1^{\star}-\phi_2^{\star}))(D_y(1-\phi_1^{\star}-\phi_2^{\star}))^2\right)_{\vec{\alpha}_0}
	\nonumber
	\\
	\leq&   \frac{\varepsilon_3^2}{9h^2} .\label{MMC-positive-9}
\end{align}

For the $Q_7$ terms, we have
	\begin{align}
		&Q_7(\phi_1^{\star},\phi_2^{\star})_{\vec{\alpha}_0} -
		Q_7(\phi_1^{\star},\phi_2^{\star})_{\vec{\alpha}_1} \nonumber\\
		=&  2 \varepsilon_3^2 d_x(\kappa(A_x(1-\phi_1^{\star}-\phi_2^{\star})) D_x(1-\phi_1^{\star}-\phi_2^{\star}) )_{\vec{\alpha}_0}
		\nonumber
		\\
		&  - 2 \varepsilon_3^2
		d_x(\kappa(A_x(1-\phi_1^{\star}-\phi_2^{\star})) D_x(1-\phi_1^{\star}-\phi_2^{\star}) )_{\vec{\alpha}_1} 	
		\nonumber
		\\
		=&  \frac{\varepsilon_3^2}{18h}
		\left(\frac{D_x(1-\phi_1^{\star}-\phi_2^{\star})}{A_x(1-\phi_1^{\star}-\phi_2^{\star})}\right)_{i_0+\nicefrac{1}{2},j_0}
		-  \frac{\varepsilon_3^2}{18h}
		\left(\frac{D_x(1-\phi_1^{\star}-\phi_2^{\star})}
		{A_x(1-\phi_1^{\star}-\phi_2^{\star})}\right)_{i_0-\nicefrac{1}{2},j_0}
		\nonumber
		\\
		&  - \frac{\varepsilon_3^2}{18h}
		\left(\frac{D_x(1-\phi_1^{\star}-\phi_2^{\star})}{A_x(1-\phi_1^{\star}-\phi_2^{\star})}\right)_{i_1+\nicefrac{1}{2},j_1}
		+ \frac{\varepsilon_3^2}{18h}
		\left(\frac{D_x(1-\phi_1^{\star}-\phi_2^{\star})}{A_x(1-\phi_1^{\star}-\phi_2^{\star})}\right)_{i_1-\nicefrac{1}{2},j_1}
		\nonumber
		\\
		\leq&  \frac{4 \varepsilon_3^2 }{9h^2} .\label{MMC-positive-10}
\end{align}
The last step above is based on the definitions of $A_x$ and $D_x$,  as well as the fact that $|\frac{a-b}{a+b}|<1$, $\forall a>0, b>0$.

Similarly, for the $Q_9$ terms, we have
\begin{align}
	Q_9(\phi_1^{\star},\phi_2^{\star})_{\vec{\alpha}_0} - Q_9(\phi_1^{\star},\phi_2^{\star})_{\vec{\alpha}_1} & = 2 \varepsilon_3^2
	d_y(\kappa(A_y(1-\phi_1^{\star}-\phi_2^{\star}))D_y(1-\phi_1^{\star}-\phi_2^{\star}))_{\vec{\alpha}_0}
	\label{MMC-positive-11}
	\\
	& \quad - 2 \varepsilon_3^2
	d_y(\kappa(A_y(1-\phi_1^{\star}-\phi_2^{\star}))D_y(1-\phi_1^{\star}-\phi_2^{\star}))_{\vec{\alpha}_1}
	\nonumber
	\\
	& \leq \frac{4 \varepsilon_3^2 }{9 h^2}.
	\nonumber
\end{align}
For the numerical solution $\phi_i^n,\, \phi_i^{n-1}$ at the previous time step, the a-priori assumption $0<\phi_i^n<1$ indicates that
\begin{equation}
	-1 \leq (\phi_i^n)_{\vec{\alpha}_0}-(\phi_i^n)_{\vec{\alpha}_1}\leq 1,\, 	-1 \leq (\phi_i^{n-1})_{\vec{\alpha}_0}-(\phi_i^{n-1})_{\vec{\alpha}_1}\leq 1,\quad i = 1,2,
\end{equation}
then, we have
\begin{equation}
	-3 \leq (\hat{\phi}_i^n)_{\vec{\alpha}_0}-(\hat{\phi}_i^n)_{\vec{\alpha}_1}\leq 3,\quad i = 1,2.
\end{equation}
For the fifth and sixth terms appearing in \eqref{MMC-Positive}, we see that
\begin{align}
	&\frac{\partial}{\partial \phi_1}H(\hat{\phi}_1^n,\hat{\phi}_2^n)|_{\vec{\alpha}_0} -
	\frac{\partial}{\partial \phi_1}H(\hat{\phi}_1^n,\hat{\phi}_2^n)|_{\vec{\alpha}_1} \nonumber\\
	=&  - 2 \chi_{13}
	[(\hat{\phi}_1^n)_{\vec{\alpha}_0}-(\hat{\phi}_1^n)_{\vec{\alpha}_1}]
	+ (\chi_{12}-\chi_{13}-\chi_{23})[(\hat{\phi}_2^n)_{\vec{\alpha}_0}-(\hat{\phi}_2^n)_{\vec{\alpha}_1}]
	\nonumber
	\\
	\leq&  3(\chi_{12} + 3 \chi_{13}+\chi_{23}) .\label{MMC-positive-3}
\end{align}
For the last term appearing in the right hand of \eqref{MMC-Positive}, we see that
\begin{equation}
	\Delta_h(\phi_1^\star)_{\vec{\alpha}_0}-\Delta_h(\phi_1^\star)_{\vec{\alpha}_1}\ge 0,\,
	-\frac{8}{h^2}\le\Delta_h(\phi_1^n)_{\vec{\alpha}_0}-\Delta_h(\phi_1^n)_{\vec{\alpha}_1}\le \frac{8}{h^2},
\end{equation}
this means 
\begin{equation}
	- A_1\Delta t\left(\Delta_h(\phi_1^\star-\phi_1^n)_{\vec{\alpha}_0}-\Delta_h(\phi_1^\star-\phi_1^n)_{\vec{\alpha}_1}\right)	\le \frac{8A_1\Delta t}{h^2}.	\label{MMC-positive-04}
\end{equation}
Putting every terms together, we have
\begin{align}
	\frac{1}{h^2}d_s \mathcal{J}_h^n(\phi_1^\star+s\psi,\phi_2^\star)|_{s=0} & \leq \ln \frac{(g(\delta))^{\nicefrac{1}{M_0}}}{\delta} - \frac{1}{M_0}\ln\overline{\phi_1^0}+\frac{4C_1}{\mathcal{M}_1\dt}
	\nonumber
	\\
	& \quad + \frac{2\varepsilon_1^2 }{9h^2} + \frac{10\varepsilon_3^2 }{9h^2} + 3(\chi_{12} + 3
	\chi_{13}+\chi_{23})+\frac{8A_1\Delta t}{h^2}.
	\nonumber
\end{align}
The following quantity is introduced: 
\[
D_0 := - \frac{1}{M_0}\ln \overline{\phi_1^0}+\frac{4C_1}{\mathcal{M}_1\dt} + \frac{2\varepsilon_1^2 }{9h^2}+ \frac{10\varepsilon_3^2 }{9h^2} + 3(\chi_{12}+3 \chi_{13}+\chi_{23})+\frac{8A_1\Delta t}{h^2}.
\]
Notice that $D_0$ is a constant for a fixed $\dt, h$, while it becomes singular as $\dt, h \rightarrow 0$. For any fixed $\dt,h$, we could choose $g(\delta)$ small enough so that
\begin{equation}
	\ln \frac{(g(\delta))^{\nicefrac{1}{M_0}}}{\delta} + D_0 < 0.
	\label{MMC-positive-delta}
\end{equation}
In particular, we can choose
\[
g(\delta) := (\delta \exp(-D_0-1))^{M_0}.
\]
This in turn shows that
\[
\frac{1}{h^2}d_s \mathcal{J}_h^n(\phi_1^\star+s\psi,\phi_2^\star)|_{s=0}  <0,  	\]
provided that $g(\delta)$ satisfies \eqref{MMC-positive-delta}. But, this contradicts the assumption that $\mathcal{J}_h^n$ has a minimum at $(\phi_1^{\star},\phi_2^{\star})$, since the directional derivative is negative in a direction pointing into $(A_{h,\delta})^{\rm o}$, the interior of $A_{h,\delta}$.
\subsubsection{The minimizer $(\phi_1^{\star},\phi_2^{\star})\in A_{h,\delta}$ could not occur at $\phi_2^{\star}=g(\delta)$.}\label{subsubsection3.2.2}

Using similar arguments to the subsection \ref{subsubsection3.2.1}, we are able to prove that, the global minimum of $\mathcal{J}_h^n$ over $A_{h,\delta}$ could not occur on the boundary section where $(\phi_2^{\star})_{\vec{\alpha}_0}=g(\delta)$, if $g(\delta)$ is small enough, for any grid index $\vec{\alpha}_0$.
\subsubsection{The minimizer $(\phi_1^{\star},\phi_2^{\star})\in A_{h,\delta}$ could not occur at $\phi_1^{\star}+\phi_2^{\star}=1-\delta$.}\label{subsubsection3.2.3}

Suppose the minimum point $(\phi_1^{\star},\phi_2^{\star})$  satisfies
\[
(\phi_1^{\star})_{\vec{\alpha}_0} +
(\phi_2^{\star})_{\vec{\alpha}_0}=1-\delta,
\]
with $\vec{\alpha}_0:=(i_0,j_0)$. We could choose $\delta \in (0, \nicefrac{1}{3}).$ Without loss of generality, it is assumed that $(\phi_1^{\star})_{\vec{\alpha}_0} \geq \frac{1}{3}$. In addition, we see that 
\[
\frac{1}{N^2}\sum_{i,j=1}^N (\phi_1^{\star}+\phi_2^{\star})_{i,j}=\overline{\phi_1^{\star}}+\overline{\phi_2^{\star}}.
\]
There exists one grid point $\vec{\alpha}_1:=(i_1,j_1)$, so that $\phi_1^{\star}+\phi_2^{\star}$ reaches the minimum value at $\vec{\alpha}_1$. Then it is obvious that $(\phi_1^{\star})_{\vec{\alpha}_1}+(\phi_2^{\star})_{\vec{\alpha}_1}\leq
\overline{\phi_1^{\star}}+\overline{\phi_2^{\star}}=\overline{\phi_1^0}+\overline{\phi_2^0}$.
In turn, the following directional derivative could be derived:
	\begin{align*}
		&d_s \mathcal{J}_h^n(\phi_1^\star+s\psi,\phi_2^\star)|_{s=0} \nonumber\\
		=& \frac{1}{2\mathcal{M}_1\Delta
			t}\langle (-\Delta_h)^{-1}\left(3\phi_1^\star-4\phi_1^n+\phi_1^{n-1} \right),\psi\rangle - A_1\Delta t\langle \Delta_h(\phi_1^\star-\phi_1^n),\psi\rangle
		\\
		&  + \langle \delta_{\phi_1}G_{h,c}(\phi_1^{\star},\phi_2^{\star}),\psi\rangle+
		\langle \frac{\partial}{\partial \phi_1}H(\hat{\phi}_1^n,\hat{\phi}_2^n),\psi\rangle ,
\end{align*}
for any $\psi \in \mathring{{\cal C}}_{\rm per}$. Setting  the direction as
\[
\psi_{i,j} = \delta_{i,i_0}\delta_{j,j_0} - \delta_{i,i_1}\delta_{j,j_1},
\]
then the derivative may be expanded as
\begin{align}
	\frac{1}{h^2}d_s \mathcal{J}_h^n(\phi_1^\star+s\psi,\phi_2^\star)|_{s=0} &= \frac{1}{2\mathcal{M}_1\Delta
		t}(-\Delta_h)^{-1}\left(3\phi_1^\star-4\phi_1^n+\phi_1^{n-1} \right)_{\vec{\alpha}_0}
	\nonumber
	\\
	& \quad  -
	\frac{1}{2\mathcal{M}_1\Delta
		t}(-\Delta_h)^{-1}\left(3\phi_1^\star-4\phi_1^n+\phi_1^{n-1} \right)_{\vec{\alpha}_1}
	\nonumber
	\\
	& \quad + Q_\ell(\phi_1^{\star},\phi_2^{\star})_{\vec{\alpha}_0}
	- Q_\ell(\phi_1^{\star},\phi_2^{\star})_{\vec{\alpha}_1}
	\label{MMC-Positive-right}
	\\
	& \quad +\frac{\partial}{\partial \phi_1}H(\hat{\phi}_1^n,\hat{\phi}_2^n)|_{\vec{\alpha}_0}
	-\frac{\partial}{\partial \phi_1}H(\hat{\phi}_1^n,\hat{\phi}_2^n)|_{\vec{\alpha}_1}
	\nonumber
	\\
	& \quad - A_1\Delta t\left(\Delta_h(\phi_1^\star-\phi_1^n)_{\vec{\alpha}_0}-\Delta_h(\phi_1^\star-\phi_1^n)_{\vec{\alpha}_1}\right).
	\nonumber
\end{align}
For the first and second terms appearing in \eqref{MMC-Positive-right}, we apply
Lemma \ref{MMC-positivity-Lem-0} and obtain
\begin{small}
	\begin{equation}
		- \frac{8C_1}{\mathcal{M}_1} \le
		\frac{1}{\mathcal{M}_1}(-\Delta_h)^{-1}\left(3\phi_1^\star-4\phi_1^n+\phi_1^{n-1} \right)_{\vec{\alpha}_0}
		-\frac{1}{\mathcal{M}_1}(-\Delta_h)^{-1}\left(3\phi_1^\star-4\phi_1^n+\phi_1^{n-1} \right)_{\vec{\alpha}_1}
		\le  \frac{8C_1}{\mathcal{M}_1} .
		\label{MMC-positive-right-1}
	\end{equation}
\end{small}
For the $Q_1$ terms, we have
\begin{align}
	&Q_1(\phi_1^{\star},\phi_2^{\star})_{\vec{\alpha}_0} -
	Q_1(\phi_1^{\star},\phi_2^{\star})_{\vec{\alpha}_1} \nonumber\\
	=&  \left( \ln
	\frac{(\phi_1^{\star})^{\nicefrac{1}{M_0}}}{1-\phi_1^{\star}-\phi_2^{\star}}\right)|_{\vec{\alpha}_0}
	- \left( \ln
	\frac{(\phi_1^{\star})^{\nicefrac{1}{M_0}}}{1-\phi_1^{\star}-\phi_2^{\star}}\right)|_{\vec{\alpha}_1}
	\nonumber
	\\
	\geq&  \ln \frac{(\frac{1}{3})^{\nicefrac{1}{M_0}}}{\delta} -
	\ln \frac{1}{1-\overline{\phi_1^0}-\overline{\phi_2^0}} .
	\label{MMC-positive-right-2}
\end{align}
The last step above comes from the facts that $(\phi_1^{\star})_{\vec{\alpha}_0} \geq \frac{1}{3}$,
$(\phi_1^{\star})_{\vec{\alpha}_1}+(\phi_2^{\star})_{\vec{\alpha}_1}\leq
\overline{\phi_1^0}+\overline{\phi_2^0}$, and $(\phi_1^{\star})_{\vec{\alpha}_1} < 1$ .

For the $Q_2$ terms, we have
\begin{align}
	&Q_2(\phi_1^{\star},\phi_2^{\star})_{\vec{\alpha}_0} -
	Q_2(\phi_1^{\star},\phi_2^{\star})_{\vec{\alpha}_1} \nonumber\\
	=&  \varepsilon_1^2
	a_x(\kappa'(A_x\phi_1^{\star})(D_x\phi_1^{\star})^2)_{\vec{\alpha}_0}
	- \varepsilon_1^2
	a_x(\kappa'(A_x\phi_1^{\star})(D_x\phi_1^{\star})^2)_{\vec{\alpha}_1}
	\nonumber
	\\
	\geq&  \varepsilon_1^2 a_x(\kappa'(A_x\phi_1^{\star})(D_x\phi_1^{\star})^2)_{\vec{\alpha}_0}
	\nonumber
	\\
	\geq&  -\frac{\varepsilon_1^2 }{9h^2} ,\label{MMC-positive-right-4}
\end{align}
in which the second step comes from the fact that
$-\varepsilon_1^2 a_x(\kappa'(A_x\phi_1^{\star})(D_x\phi_1^{\star})^2)_{\vec{\alpha}_1}\geq 0$,
since $\kappa'(\phi)= -\frac{1}{36\phi^2}<0$, and the last step is based on the definitions of
$\kappa'(\phi)$, $a_x$, $A_x$, and $D_x$,  as well as the fact that $|\frac{a-b}{a+b}|<1$,
$\forall a>0, b>0$.

For the $Q_4$ terms, similarly, we get 
\begin{align}
	Q_4(\phi_1^{\star},\phi_2^{\star})_{\vec{\alpha}_0} -
	Q_4(\phi_1^{\star},\phi_2^{\star})_{\vec{\alpha}_1} & = \varepsilon_1^2
	a_y(\kappa'(A_y\phi_1^{\star})(D_y\phi_1^{\star})^2)_{\vec{\alpha}_0}
	\label{MMC-positive-right-5}
	\\
	& \quad - \varepsilon_1^2 a_y(\kappa'(A_y\phi_1^{\star})(D_y\phi_1^{\star})^2)_{\vec{\alpha}_1}
	\nonumber
	\\
	& \geq -\frac{\varepsilon_1^2 }{9h^2} .
	\nonumber
\end{align}
The $Q_3$ and $Q_5$ terms could be analyzed as follows
\begin{align}
	Q_3(\phi_1^{\star},\phi_2^{\star})_{\vec{\alpha}_0} -
	Q_3(\phi_1^{\star},\phi_2^{\star})_{\vec{\alpha}_1} & = -2\varepsilon_1^2
	d_x(\kappa(A_x\phi_1^{\star}) D_x\phi_1^{\star})_{\vec{\alpha}_0}
	\label{MMC-positive-right-6}
	\\
	& \quad + 2\varepsilon_1^2
	d_x(\kappa(A_x\phi_1^{\star}) D_x\phi_1^{\star})_{\vec{\alpha}_1}
	\nonumber
	\\
	& \geq -\frac{4\varepsilon_1^2 }{9h^2} ,
	\nonumber
	\\
	Q_5(\phi_1^{\star},\phi_2^{\star})_{\vec{\alpha}_0} -
	Q_5(\phi_1^{\star},\phi_2^{\star})_{\vec{\alpha}_1} & = -2\varepsilon_1^2
	d_y(\kappa(A_y\phi_1^{\star}) D_y\phi_1^{\star})_{\vec{\alpha}_0}
	\label{MMC-positive-right-7}
	\\
	& \quad + 2\varepsilon_1^2 d_y(\kappa(A_y\phi_1^{\star}) D_y\phi_1^{\star})_{\vec{\alpha}_1}
	\nonumber
	\\
	& \geq -\frac{4\varepsilon_1^2 }{9h^2} .    \nonumber
\end{align}
The estimates for the $Q_6$ and $Q_8$ terms are similar:
	\begin{align}
		&Q_6(\phi_1^{\star},\phi_2^{\star})_{\vec{\alpha}_0} -
		Q_6(\phi_1^{\star},\phi_2^{\star})_{\vec{\alpha}_1} \nonumber\\
		=&  -\varepsilon_3^2
		a_x\left(\kappa'(A_x(1-\phi_1^{\star}-\phi_2^{\star}))(D_x(1-\phi_1^{\star}-\phi_2^{\star}))^2
		\right)_{\vec{\alpha}_0}
		\nonumber
		\\
		&+ \varepsilon_3^2
		a_x\left(\kappa'(A_x(1-\phi_1^{\star}-\phi_2^{\star}))(D_x(1-\phi_1^{\star}-\phi_2^{\star}))^2\right)_{\vec{\alpha}_1}
		\nonumber
		\\
		\geq&  \varepsilon_3^2
		a_x\left(\kappa'(A_x(1-\phi_1^{\star}-\phi_2^{\star}))(D_x(1-\phi_1^{\star}-\phi_2^{\star}))^2\right)_{\vec{\alpha}_1}
		\nonumber
		\\
		\geq&  - \frac{\varepsilon_3^2 }{9h^2} ,
		\label{MMC-positive-right-8}
		\\
		&Q_8(\phi_1^{\star},\phi_2^{\star})_{\vec{\alpha}_0} -
		Q_8(\phi_1^{\star},\phi_2^{\star})_{\vec{\alpha}_1} \nonumber\\
		=&  -\varepsilon_3^2
		a_y\left(\kappa'(A_y(1-\phi_1^{\star}-\phi_2^{\star}))(D_y(1-\phi_1^{\star}-\phi_2^{\star}))^2\right)_{\vec{\alpha}_0}
		\nonumber
		\\
		&+ \varepsilon_3^2
		a_y\left(\kappa'(A_y(1-\phi_1^{\star}-\phi_2^{\star}))(D_y(1-\phi_1^{\star}-\phi_2^{\star}))^2\right)_{\vec{\alpha}_1}
		\nonumber
		\\
		\geq&  \varepsilon_3^2
		a_y\left(\kappa'(A_y(1-\phi_1^{\star}-\phi_2^{\star}))(D_y(1-\phi_1^{\star}-\phi_2^{\star}))^2\right)_{\vec{\alpha}_1}
		\nonumber
		\\
		\geq&  -\frac{\varepsilon_3^2 }{9h^2} .\label{MMC-positive-right-9}
\end{align}
For the $Q_7$ terms, we see that
\begin{align}
	Q_7(\phi_1^{\star},\phi_2^{\star})_{\vec{\alpha}_0} -
	Q_7(\phi_1^{\star},\phi_2^{\star})_{\vec{\alpha}_1} & = 2\varepsilon_3^2
	d_x(\kappa(A_x(1-\phi_1^{\star}-\phi_2^{\star}))D_x(1-\phi_1^{\star}-\phi_2^{\star}))_{\vec{\alpha}_0}
	\label{MMC-positive-right-10}
	\\
	& \quad - 2\varepsilon_3^2 d_x(\kappa(A_x(1-\phi_1^{\star}-\phi_2^{\star})) D_x(1-\phi_1^{\star}-\phi_2^{\star}) )_{\vec{\alpha}_1}
	\nonumber
	\\
	& \geq 0 .
	\nonumber
\end{align}
The last step above comes from the fact that
\begin{align*}
	(D_x (1-\phi_1^{\star}-\phi_2^{\star}))_{i_0-\nicefrac{1}{2},j_0} & \leq 0,
	\\
	(D_x(1-\phi_1^{\star}-\phi_2^{\star}))_{i_0+\nicefrac{1}{2},j_0} & \geq 0,
	\\
	(D_x(1-\phi_1^{\star}-\phi_2^{\star}))_{i_1-\nicefrac{1}{2},j_1} & \geq 0,
	\\
	(D_x(1-\phi_1^{\star}-\phi_2^{\star}))_{i_1+\nicefrac{1}{2},j_1} &\leq 0.
\end{align*}
Similarly, for the $Q_9$ terms, we see that
\begin{align}
	Q_9(\phi_1^{\star},\phi_2^{\star})_{\vec{\alpha}_0} -
	Q_9(\phi_1^{\star},\phi_2^{\star})_{\vec{\alpha}_1} & = 2\varepsilon_3^2
	d_y(\kappa(A_y(1-\phi_1^{\star}-\phi_2^{\star}))D_y(1-\phi_1^{\star}-\phi_2^{\star}))_{\vec{\alpha}_0}
	\label{MMC-positive-right-11}
	\\
	& \quad - 2\varepsilon_3^2
	d_y(\kappa(A_y(1-\phi_1^{\star}-\phi_2^{\star}))D_y(1-\phi_1^{\star}-\phi_2^{\star}))_{\vec{\alpha}_1}
	\nonumber
	\\
	& \geq 0 .
	\nonumber
\end{align}
For the numerical solution $\phi_i^{n-1}, \phi_i^n$ at the previous time step, similar bounds could be derived for the fifth and sixth terms appearing in \eqref{MMC-Positive-right}, we have
	\begin{align}
		&\frac{\partial}{\partial \phi_1}H(\hat{\phi}_1^n,\hat{\phi}_2^n)|_{\vec{\alpha}_0} -
		\frac{\partial}{\partial \phi_1}H(\hat{\phi}_1^n,\hat{\phi}_2^n)|_{\vec{\alpha}_1}\nonumber\\
		= &  - 2 \chi_{13}
		[(\hat{\phi}_1^n)_{\vec{\alpha}_0}-(\hat{\phi}_1^n)_{\vec{\alpha}_1}]
		+ (\chi_{12}-\chi_{13}-\chi_{23})[(\hat{\phi}_2^n)_{\vec{\alpha}_0}-(\hat{\phi}_2^n)_{\vec{\alpha}_1}]
		\nonumber
		\\
		\ge&  -3(\chi_{12} + 3 \chi_{13}+\chi_{23}) .\label{MMC-positive-right-3}
\end{align}
For the last term appearing in \eqref{MMC-Positive-right}, similar bounds could be derived 
\begin{equation}
	- A_1\Delta t\left(\Delta_h(\phi_1^\star-\phi_1^n)_{\vec{\alpha}_0}-\Delta_h(\phi_1^\star-\phi_1^n)_{\vec{\alpha}_1}\right)	\ge -\frac{16A_1\Delta t}{h^2}.	\label{MMC-positive-right-14}
\end{equation}
Putting estimates together, we arrive at 
\begin{align}
	\frac{1}{h^2}d_s \mathcal{J}_h^n(\phi_1^\star+s\psi,\phi_2^\star)|_{s=0} & \geq \ln \frac{(\frac{1}{3})^{\nicefrac{1}{M_0}}}{\delta} -
	\ln \frac{1}{1-\overline{\phi_1^0}-\overline{\phi_2^0}}-\frac{4C_1}{\mathcal{M}_1\dt}
	\nonumber
	\\
	& \quad - \frac{10\varepsilon_1^2}{9h^2} - \frac{2\varepsilon_3^2}{9h^2} - 3(\chi_{12}+3 \chi_{13}+\chi_{23})-\frac{16A_1\Delta t}{h^2}.
	\nonumber
\end{align}
The following quantity is introduced:
	\begin{small}
		\begin{equation}
			D_1 := \frac{1}{M_0} \ln3 +\ln
			\frac{1}{1-\overline{\phi_1^0}-\overline{\phi_2^0}}+\frac{4C_1}{\mathcal{M}_1\dt} +  \frac{10\varepsilon_1^2}{9h^2}+\frac{2\varepsilon_3^2}{9h^2} +3(\chi_{12}+3 \chi_{13}+\chi_{23})+\frac{16A_1\Delta t}{h^2}.
		\end{equation}
\end{small}
For any fixed $\dt,h$, we could choose $\delta$ small enough so that
\begin{equation}
	-\ln \delta- D_1 > 0,
	\label{MMC-positive-right-delta}
\end{equation}
in particular, $\delta = \min \{\exp(-D_1-1), \nicefrac{1}{3} \}$. This in turn shows that
\[
\frac{1}{h^2}d_s \mathcal{J}_h^n(\phi_1^\star+s\psi,\phi_2^\star)|_{s=0} > 0,
\]
provided that $\delta$ satisfies  \eqref{MMC-positive-right-delta}. This contradicts the assumption that $\mathcal{J}_h^n$ has a minimum at
$(\phi_1^{\star},\phi_2^{\star})$.
\subsubsection{The minimizer $(\phi_1^{\star},\phi_2^{\star})\in A_{h,\delta}$ could not occur at $\phi_1^{\star}+\phi_2^{\star}=\delta$.}

Using similar arguments to the subsection \ref{subsubsection3.2.3}, we can also prove that, the global minimum of $\mathcal{J}_h^n$ over $A_{h,\delta}$ could not occur on the boundary section where $(\phi_1^{\star})_{\vec{\alpha}_0} + (\phi_2^{\star})_{\vec{\alpha}_0} = \delta$, if $\delta$ is small enough, for any grid index $\vec{\alpha}_0$. The details are left to the interested readers.

Finally, a combination of these four cases reveals that, the global minimizer of $\mathcal{J}_h^n(\phi_1,\phi_2)$ could only possibly occur at interior point of $( A_{h,\delta} )^0 \subset (A_h )^0$. We conclude that there must be a solution $(\phi_1, \phi_2) \in (A_h )^0$ that minimizes $\mathcal{J}_h^n(\phi_1,\phi_2)$ over $A_h$, which is equivalent to the numerical solution of~\eqref{Full-discrete-1}-\eqref{Full-discrete-mu2}. The existence of the numerical solution is established.

In addition, since $\mathcal{J}_h^n(\phi_1,\phi_2)$ is a strictly convex function over $A_h$, the uniqueness analysis for this numerical solution is straightforward. Using similar argument, the positivity-preserving property is established for the initialization step, the details are left to the interested readers. The proof of Theorem~\ref{MMC-positivity} is complete.
\section{Energy stability}\label{sec:unconditional energy
	stability}

Due to the fully discrete scheme~\eqref{Full-discrete-1}-\eqref{Full-discrete-mu2} is three-level scheme, here, we define the modified discrete energy as
\begin{align*}
	E_h^{n+1,n}:=& 	G_h(\phi_1^{n+1},\phi_2^{n+1})+\frac{\Delta t}{4\mathcal{M}_1}\nrm{\frac{\phi_1^{n+1}-\phi_1^n}{\Delta t}}_{-1,h}^2+\frac{\Delta t}{4\mathcal{M}_2}\nrm{\frac{\phi_2^{n+1}-\phi_2^n}{\Delta t}}_{-1,h}^2\\
	&+\frac{2\chi_{12}+3\chi_{13}+\chi_{23}}{2}\nrm{\phi_1^{n+1}-\phi_1^n}_2^2
	+\frac{2\chi_{12}+\chi_{13}+3\chi_{23}}{2}\nrm{\phi_2^{n+1}-\phi_2^n}_2^2,\nonumber
\end{align*}
in which $n \geq 0$.
\begin{theorem}(Energy stability) \label{MMC-energy stab} 
	When $n \geq 1$, $A_1 \ge \frac{\mathcal{M}_1(3\chi_{13}+3\chi_{23}+2\chi_{12})^2}{4}$, $A_2 \ge \frac{\mathcal{M}_2(3\chi_{13}+3\chi_{23}+2\chi_{12})^2}{4}$, the fully discrete scheme~\eqref{Full-discrete-1}-\eqref{Full-discrete-mu2} has the energy-decay property
	\begin{equation*}
		E_h^{n+1,n} \leq E_h^{n,n-1}. 
	\end{equation*}
\end{theorem}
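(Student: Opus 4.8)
The plan is to reproduce the $H^{-1}$-gradient-flow dissipation $\tfrac{d}{dt}G=-\sum_i\mathcal M_i\|\nabla\mu_i\|^2\le 0$ at the fully discrete level, charging the BDF2 history and the explicit extrapolation of the concave part to the two ``memory'' terms of $E_h^{n+1,n}$. The starting point is the convexity already isolated in Lemma~\ref{lemma2.2}: applied with $\vec\phi=\vec\phi^{n+1}$, $\vec\psi=\vec\phi^{n}$ it gives
\begin{equation*}
G_h(\vec\phi^{n+1})-G_h(\vec\phi^{n})\le \sum_{i=1}^2\bigl\langle \delta_{\phi_i}G_{h,c}(\vec\phi^{n+1})-\delta_{\phi_i}G_{h,e}(\vec\phi^{n}),\,\phi_i^{n+1}-\phi_i^n\bigr\rangle .
\end{equation*}

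I would then substitute the scheme's algebraic identity $\delta_{\phi_i}G_{h,c}(\vec\phi^{n+1})-\delta_{\phi_i}G_{h,e}(\vec\phi^{n})=\mu_i^{n+1}+\bigl(\delta_{\phi_i}G_{h,e}(\hat{\vec\phi}^{n})-\delta_{\phi_i}G_{h,e}(\vec\phi^{n})\bigr)+A_i\dt\,\Delta_h(\phi_i^{n+1}-\phi_i^n)$, which splits the right-hand side into three groups. The regularization contribution collapses, by summation by parts, to $-A_i\dt\|\nabla_h(\phi_i^{n+1}-\phi_i^n)\|_2^2\le 0$ (a favorable sign). The $\mu_i^{n+1}$ contribution is handled by rewriting~\eqref{Full-discrete-1}--\eqref{Full-discrete-2} as $\mu_i^{n+1}-\overline{\mu_i^{n+1}}=-\tfrac1{2\mathcal M_i\dt}(-\Delta_h)^{-1}(3\phi_i^{n+1}-4\phi_i^n+\phi_i^{n-1})$ (legitimate since all iterates share the same mass, by Theorem~\ref{MMC-positivity}), so that $\langle\mu_i^{n+1},\phi_i^{n+1}-\phi_i^n\rangle=-\tfrac1{2\mathcal M_i\dt}\langle 3\phi_i^{n+1}-4\phi_i^n+\phi_i^{n-1},\phi_i^{n+1}-\phi_i^n\rangle_{-1,h}$; splitting $3\phi^{n+1}-4\phi^n+\phi^{n-1}=2(\phi^{n+1}-\phi^n)+\bigl((\phi^{n+1}-\phi^n)-(\phi^n-\phi^{n-1})\bigr)$ and using Young's inequality in the $\|\cdot\|_{-1,h}$ inner product produces an upper bound of the form $-\tfrac5{4\mathcal M_i\dt}\|\phi_i^{n+1}-\phi_i^n\|_{-1,h}^2+\tfrac1{4\mathcal M_i\dt}\|\phi_i^{n}-\phi_i^{n-1}\|_{-1,h}^2$. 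The $\tfrac14$-weighted pieces here are exactly the $\|\cdot\|_{-1,h}$ terms of $E_h^{n+1,n}$ and $E_h^{n,n-1}$, and the leftover $-\tfrac1{\mathcal M_i\dt}\|\phi_i^{n+1}-\phi_i^n\|_{-1,h}^2$ is set aside as a surplus.

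After collecting the above, $E_h^{n+1,n}\le E_h^{n,n-1}$ is equivalent to bounding the concave-error sum $\sum_i\langle \delta_{\phi_i}G_{h,e}(\hat{\vec\phi}^{n})-\delta_{\phi_i}G_{h,e}(\vec\phi^{n}),\phi_i^{n+1}-\phi_i^n\rangle$ together with $\sum_i L_i\bigl(\|\phi_i^{n+1}-\phi_i^n\|_2^2-\|\phi_i^{n}-\phi_i^{n-1}\|_2^2\bigr)$ from above by $\sum_i\bigl(\tfrac1{\mathcal M_i\dt}\|\phi_i^{n+1}-\phi_i^n\|_{-1,h}^2+A_i\dt\|\nabla_h(\phi_i^{n+1}-\phi_i^n)\|_2^2\bigr)$, where $L_1=\tfrac{2\chi_{12}+3\chi_{13}+\chi_{23}}2$ and $L_2=\tfrac{2\chi_{12}+\chi_{13}+3\chi_{23}}2$. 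This is the heart of the matter and where the hypotheses enter. Since $H$ is quadratic, $\delta_{\phi_i}G_{h,e}$ is affine, so $\delta_{\phi_i}G_{h,e}(\hat{\vec\phi}^{n})-\delta_{\phi_i}G_{h,e}(\vec\phi^{n})$ is the linear action of $-\mathrm{Hess}\,H$ on $\vec\phi^{n}-\vec\phi^{n-1}$; hence the concave-error sum is a bilinear form in $\vec\phi^{n+1}-\vec\phi^{n}$ and $\vec\phi^{n}-\vec\phi^{n-1}$, with coefficients $2\chi_{13},2\chi_{23},\chi_{12}-\chi_{13}-\chi_{23}$, and is not sign-definite. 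I would estimate each of its four scalar products by the symmetric Young inequality; using positivity of the Huggins parameters and $|\chi_{12}-\chi_{13}-\chi_{23}|\le\chi_{12}+\chi_{13}+\chi_{23}$, one checks that the resulting coefficient of $\|\phi_i^{n}-\phi_i^{n-1}\|_2^2$ is strictly below $L_i$ (absorbed by the $-L_i\|\phi_i^{n}-\phi_i^{n-1}\|_2^2$ already present), and the coefficient of $\|\phi_i^{n+1}-\phi_i^n\|_2^2$ is also below $L_i$, so the whole left-hand side is controlled by $\sum_i 2L_i\|\phi_i^{n+1}-\phi_i^n\|_2^2$ up to a nonpositive remainder. (The concavity assumption $4\chi_{13}\chi_{23}-(\chi_{12}-\chi_{13}-\chi_{23})^2>0$ is used upstream, to guarantee that $G_{h,e}$ is convex so that Lemma~\ref{lemma2.2} applies.)

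Finally I would dominate $2L_i\|\phi_i^{n+1}-\phi_i^n\|_2^2$ by the set-aside surplus $\tfrac1{\mathcal M_i\dt}\|\phi_i^{n+1}-\phi_i^n\|_{-1,h}^2$ plus the regularization term $A_i\dt\|\nabla_h(\phi_i^{n+1}-\phi_i^n)\|_2^2$, via the discrete interpolation inequality $\|v\|_2^2\le\|\nabla_h v\|_2\,\|v\|_{-1,h}$ for mean-zero $v$ (a consequence of $\langle v,v\rangle=\langle\mathcal L^{1/2}v,\mathcal L^{-1/2}v\rangle$) followed by Young's inequality with free parameter $\epsilon=A_i\dt/L_i$: this yields $2L_i\|\phi_i^{n+1}-\phi_i^n\|_2^2\le A_i\dt\|\nabla_h(\phi_i^{n+1}-\phi_i^n)\|_2^2+\tfrac{L_i^2}{A_i\dt}\|\phi_i^{n+1}-\phi_i^n\|_{-1,h}^2$, which closes exactly when $A_i\ge\mathcal M_i L_i^2$; bounding $L_i\le\tfrac{2\chi_{12}+3\chi_{13}+3\chi_{23}}2$ uniformly in $i$ gives the stated condition $A_i\ge\mathcal M_i(3\chi_{13}+3\chi_{23}+2\chi_{12})^2/4$. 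The main obstacle throughout is the sign-indefinite error created by the second-order Adams--Bashforth extrapolation of the concave energy: it is precisely the Douglas--Dupont term $A_i\dt\Delta_h(\phi_i^{n+1}-\phi_i^n)$, together with the quadratic lower bound on $A_i$, that buys the trade-off among the $\ell^2$-, $H^1_h$- and $H^{-1}_h$-norms of the increments; the restriction $n\ge 1$ is what makes $\vec\phi^{n-1}$ and the BDF2 stencil available, and the initialization step~\eqref{scheme-CH_initial} is what one would analyze separately to start the recursion.
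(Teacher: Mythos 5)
Your proposal is correct and follows essentially the same route as the paper: the convex-splitting inequality of Lemma~\ref{lemma2.2}, the BDF2 telescoping estimate in the $\|\cdot\|_{-1,h}$ inner product with the $\tfrac54$/$\tfrac14$ split, symmetric Young bounds on the Adams--Bashforth extrapolation error of the concave part, and the final absorption of the resulting $\ell^2$ increments into the Douglas--Dupont term and the leftover $\tfrac{1}{\mathcal M_i\dt}\|\phi_i^{n+1}-\phi_i^n\|_{-1,h}^2$ surplus via $\|v\|_2^2\le\|\nabla_h v\|_2\,\|v\|_{-1,h}$, yielding exactly the stated thresholds $A_i\ge \mathcal M_i(3\chi_{13}+3\chi_{23}+2\chi_{12})^2/4$. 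The only cosmetic difference is that you apply a plain symmetric Young inequality to the cross terms where the paper routes through $2ab=a^2+b^2-(a-b)^2$ and $(a-b)^2\le 2(a^2+b^2)$, which gives the same (in fact slightly lossier) constants and the same modified-energy bookkeeping.
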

\begin{proof}
	Due to the mass conservation,  $ \mathcal{L}^{-1} (\phi_i^{n+1}-\phi_i^n)$ is well-defined. Taking a discrete inner product with \eqref{Full-discrete-1} by $ \frac{1}{\mathcal{M}_1}\mathcal{L}^{-1} (\phi_1^{n+1}-\phi_1^n)$ , with \eqref{Full-discrete-mu1} by $\phi_1^{n+1}-\phi_1^n$,with \eqref{Full-discrete-2} by $ \frac{1}{\mathcal{M}_2}\mathcal{L}^{-1} (\phi_2^{n+1}-\phi_2^n)$ , with \eqref{Full-discrete-mu2} by $\phi_2^{n+1}-\phi_2^n$ yields
	     \begin{align*}
		0 =& \frac{1}{2\mathcal{M}_1\dt} \langle  3\phi_1^{n+1} - 4\phi_1^n + \phi_1^{n-1} , \mathcal{L}^{-1} (\phi_1^{n+1}-\phi_1^n) \rangle + \langle  \mu_1^{n+1}, \phi_1^{n+1}-\phi_1^n \rangle \\ 
		&+ \frac{1}{2\mathcal{M}_2\dt} \langle  3\phi_2^{n+1} - 4\phi_2^n + \phi_2^{n-1} , \mathcal{L}^{-1} (\phi_2^{n+1}-\phi_2^n) \rangle
		+ \langle \mu_2^{n+1} , \phi_2^{n+1}-\phi_2^n \rangle.
	\end{align*}
The equivalent form is the following identity
     \begin{eqnarray}
	0 &=& \frac{1}{2\mathcal{M}_1\dt} \langle  3\phi_1^{n+1} - 4\phi_1^n + \phi_1^{n-1} , \mathcal{L}^{-1} (\phi_1^{n+1}-\phi_1^n) \rangle +A_1 \dt \nrm{\nabla_h(\phi_1^{n+1}-\phi_1^n)}_2^2\nonumber\\
	&&+ \langle  \delta_{\phi_1} G_{h,c}(\phi_1^{n+1},\phi_2^{n+1})  - \delta_{\phi_1} G_{h,e}(\phi_1^n,\phi_2^n) , \phi_1^{n+1}-\phi_1^n \rangle\nonumber
	\\
	&&+ \langle  \delta_{\phi_1} G_{h,e}(\phi_1^n,\phi_2^n)  - \delta_{\phi_1} G_{h,e}(\hat{\phi}_1^n,\hat{\phi}_2^n) , \phi_1^{n+1}-\phi_1^n \rangle\nonumber
	\\
	&&+\frac{1}{2\mathcal{M}_2\dt} \langle  3\phi_2^{n+1} - 4\phi_2^n + \phi_2^{n-1}, \mathcal{L}^{-1} (\phi_2^{n+1}-\phi_2^n) \rangle +A_2 \dt \nrm{\nabla_h(\phi_2^{n+1}-\phi_2^n)}_2^2\nonumber\\
	&&+ \langle  \delta_{\phi_2} G_{h,c}(\phi_1^{n+1},\phi_2^{n+1})  - \delta_{\phi_2} G_{h,e}(\phi_1^n,\phi_2^n), \phi_2^{n+1}-\phi_2^n \rangle\nonumber
	\\
	&&+ \langle  \delta_{\phi_2} G_{h,e}(\phi_1^n,\phi_2^n)  - \delta_{\phi_2} G_{h,e}(\hat{\phi}_1^n,\hat{\phi}_2^n) , \phi_2^{n+1}-\phi_2^n \rangle
	\label{MMC3term_BDF2} .
\end{eqnarray}
For the first and fifth term of the right hand side of \eqref{MMC3term_BDF2}, we have
     \begin{align}
	&\frac{1}{2\mathcal{M}_i\dt} \langle 3\phi_i^{n+1} - 4\phi_i^n + \phi_i^{n-1} , \mathcal{L}^{-1} (\phi_i^{n+1}-\phi_i^n) \rangle \nonumber\\
	=& \frac{\dt}{\mathcal{M}_i} \left(\frac{5}{4}\nrm{\frac{\phi_i^{n+1}-\phi_i^n}{\dt}}_{-1,h}^2 - \frac{1}{4}\nrm{\frac{\phi_i^n-\phi_i^{n-1}}{\dt}}_{-1,h}^2\right)
	\nonumber\\
	&+ \frac{\dt^3}{4\mathcal{M}_i}\nrm{\frac{\phi_i^{n+1}-2\phi_i^n+\phi_i^{n-1}}{\dt^2}}_{-1,h}^2\nonumber\\
	\ge& \frac{\dt}{\mathcal{M}_i} \left(\frac{5}{4}\nrm{\frac{\phi_i^{n+1}-\phi_i^n}{\dt}}_{-1,h}^2 - \frac{1}{4}\nrm{\frac{\phi_i^n-\phi_i^{n-1}}{\dt}}_{-1,h}^2\right) ,\quad i = 1,2.\label{8}
\end{align}
For the third and seventh term of the right hand side of \eqref{MMC3term_BDF2}, using the Lemma \ref{lemma2.2}, we have
	\begin{align*}
		&\langle  \delta_{\phi_1} G_{h,c}(\phi_1^{n+1},\phi_2^{n+1})  - \delta_{\phi_1} G_{h,e}(\phi_1^n,\phi_2^n), \phi_1^{n+1}-\phi_1^n \rangle\nonumber\\
		&+ \langle  \delta_{\phi_2} G_{h,c}(\phi_1^{n+1},\phi_2^{n+1})  - \delta_{\phi_2} G_{h,e}(\phi_1^n,\phi_2^n) , \phi_2^{n+1}-\phi_2^n \rangle\nonumber\\
		\ge& G_h(\phi_1^{n+1},\phi_2^{n+1}) - G_h(\phi_1^n,\phi_2^n).
	\end{align*}
For the fourth term of the right hand side of \eqref{MMC3term_BDF2}, we have
\begin{align*}
	&\langle  \delta_{\phi_1} G_{h,e}(\phi_1^n,\phi_2^n)  - \delta_{\phi_1} G_{h,e}(\hat{\phi}_1^n,\hat{\phi}_2^n) , \phi_1^{n+1}-\phi_1^n \rangle\nonumber\\
	=&- \langle \frac{\partial}{\partial \phi_1} H(\phi_1^n,\phi_2^n) - \frac{\partial}{\partial \phi_1} H(\hat{\phi}_1^n,\hat{\phi}_2^n) , \phi_1^{n+1}-\phi_1^n \rangle\nonumber\\
	=&\langle 2\chi_{13}(\phi_1^n-\hat{\phi}_1^n)-(\chi_{12}-\chi_{13}-\chi_{23})(\phi_2^n-\hat{\phi}_2^n),\phi_1^{n+1}-\phi_1^n \rangle\nonumber\\
	=&-2\chi_{13}\langle \phi_1^n-\phi_1^{n-1}, \phi_1^{n+1}-\phi_1^n \rangle + (\chi_{12}-\chi_{13}-\chi_{23})\langle \phi_2^n-\phi_2^{n-1}, \phi_1^{n+1}-\phi_1^n \rangle\nonumber\\
	\ge& -\chi_{13} (\|\phi_1^n-\phi_1^{n-1}\|_2^2+\|\phi_1^{n+1}-\phi_1^n\|_2^2) \nonumber\\
	&+ \frac{\chi_{12}-\chi_{13}-\chi_{23}}{2}
	\left(\|\phi_2^n-\phi_2^{n-1}\|_2^2+\|\phi_1^{n+1}-\phi_1^n\|_2^2-\|\phi_2^n-\phi_2^{n-1}-\phi_1^{n+1}+\phi_1^n\|_2^2\right)\nonumber\\
	\ge& -\chi_{13} (\|\phi_1^n-\phi_1^{n-1}\|_2^2+\|\phi_1^{n+1}-\phi_1^n\|_2^2) \nonumber\\
	&- \frac{\chi_{13}+\chi_{23}}{2}
	\left(\|\phi_2^n-\phi_2^{n-1}\|_2^2+\|\phi_1^{n+1}-\phi_1^n\|_2^2\right) -\frac{\chi_{12}}{2}\|\phi_2^n-\phi_2^{n-1}-\phi_1^{n+1}+\phi_1^n\|_2^2\nonumber\\
	\ge& -\chi_{13} (\|\phi_1^n-\phi_1^{n-1}\|_2^2+\|\phi_1^{n+1}-\phi_1^n\|_2^2) \nonumber\\
	&- \frac{\chi_{13}+\chi_{23}}{2}
	\left(\|\phi_2^n-\phi_2^{n-1}\|_2^2+\|\phi_1^{n+1}-\phi_1^n\|_2^2\right)\nonumber\\
	& -\chi_{12}(\|\phi_2^n-\phi_2^{n-1}\|_2^2+\|\phi_1^{n+1}-\phi_1^n\|_2^2)\nonumber\\
	=& -\chi_{13} \|\phi_1^n-\phi_1^{n-1}\|_2^2 - \frac{3\chi_{13}+\chi_{23}+2\chi_{12}}{2} \|\phi_1^{n+1}-\phi_1^n\|_2^2 \nonumber\\
	&- \frac{2\chi_{12}+\chi_{13}+\chi_{23}}{2}\|\phi_2^n-\phi_2^{n-1}\|_2^2,
\end{align*}
in which the fourth step is based on the formula $2ab\le a^2+b^2, \,2ab=a^2+b^2-(a-b)^2$, the next-to-last step comes from inequality $(a-b)^2\le 2(a^2+b^2)$.

For the last term of the right hand side of \eqref{MMC3term_BDF2}, similarly, we have
\begin{small}
\begin{align*}
	&\langle  \delta_{\phi_2} G_{h,e}(\phi_1^n,\phi_2^n)  - \delta_{\phi_2} G_{h,e}(\hat{\phi}_1^n,\hat{\phi}_2^n) , \phi_2^{n+1}-\phi_2^n \rangle\nonumber\\
	=&- \langle \frac{\partial}{\partial \phi_2} H(\phi_1^n,\phi_2^n) - \frac{\partial}{\partial \phi_2} H(\hat{\phi}_1^n,\hat{\phi}_2^n) , \phi_2^{n+1}-\phi_2^n \rangle\nonumber\\
	=&\langle 2\chi_{23}(\phi_2^n-\hat{\phi}_2^n)-(\chi_{12}-\chi_{13}-\chi_{23})(\phi_1^n-\hat{\phi}_1^n),\phi_2^{n+1}-\phi_2^n \rangle\nonumber\\
	=&-2\chi_{23}\langle \phi_2^n-\phi_2^{n-1}, \phi_2^{n+1}-\phi_2^n \rangle + (\chi_{12}-\chi_{13}-\chi_{23})\langle \phi_1^n-\phi_1^{n-1} , \phi_2^{n+1}-\phi_2^n \rangle\nonumber\\
	\ge& -\chi_{23} (\|\phi_2^n-\phi_2^{n-1}\|_2^2+\|\phi_2^{n+1}-\phi_2^n\|_2^2) \nonumber\\
	&+ \frac{\chi_{12}-\chi_{13}-\chi_{23}}{2}
	\left(\|\phi_1^n-\phi_1^{n-1}\|_2^2+\|\phi_2^{n+1}-\phi_2^n\|_2^2-\|\phi_1^n-\phi_1^{n-1}-\phi_2^{n+1}+\phi_2^n\|_2^2\right)\nonumber\\
	\ge& -\chi_{23} (\|\phi_2^n-\phi_2^{n-1}\|_2^2+\|\phi_2^{n+1}-\phi_2^n\|_2^2) \nonumber\\
	&- \frac{\chi_{13}+\chi_{23}}{2}
	\left(\|\phi_1^n-\phi_1^{n-1}\|_2^2+\|\phi_2^{n+1}-\phi_2^n\|_2^2\right) -\frac{\chi_{12}}{2}\|\phi_1^n-\phi_1^{n-1}-\phi_2^{n+1}+\phi_2^n\|_2^2\nonumber\\
	\ge& -\chi_{23} (\|\phi_2^n-\phi_2^{n-1}\|_2^2+\|\phi_2^{n+1}-\phi_2^n\|_2^2) \nonumber\\
	&- \frac{\chi_{13}+\chi_{23}}{2}
	\left(\|\phi_1^n-\phi_1^{n-1}\|_2^2+\|\phi_2^{n+1}-\phi_2^n\|_2^2\right)\nonumber\\
	& -\chi_{12}(\|\phi_1^n-\phi_1^{n-1}\|_2^2+\|\phi_2^{n+1}-\phi_2^n\|_2^2)\nonumber\\
	=& -\chi_{23} \|\phi_2^n-\phi_2^{n-1}\|_2^2 - \frac{\chi_{13}+3\chi_{23}+2\chi_{12}}{2} \|\phi_2^{n+1}-\phi_2^n\|_2^2 \nonumber\\
	&- \frac{2\chi_{12}+\chi_{13}+\chi_{23}}{2}\|\phi_1^n-\phi_1^{n-1}\|_2^2.
\end{align*}
\end{small}
Going back \eqref{MMC3term_BDF2} and by simple calculation,  we arrive at
   \begin{eqnarray}
	&& \frac{\dt}{\mathcal{M}_1} \left(\frac{5}{4}\nrm{\frac{\phi_1^{n+1}-\phi_1^n}{\dt}}_{-1,h}^2 - \frac{1}{4}\nrm{\frac{\phi_1^n-\phi_1^{n-1}}{\dt}}_{-1,h}^2\right) 
	+ A_1 \dt \nrm{\nabla_h(\phi_1^{n+1}-\phi_1^n)}_2^2  \nonumber\\
	&& +  \frac{\dt}{\mathcal{M}_2} \left(\frac{5}{4}\nrm{\frac{\phi_2^{n+1}-\phi_2^n}{\dt}}_{-1,h}^2 - \frac{1}{4}\nrm{\frac{\phi_2^n-\phi_2^{n-1}}{\dt}}_{-1,h}^2\right)  + A_2 \dt \nrm{\nabla_h(\phi_2^{n+1}-\phi_2^n)}_2^2 \nonumber\\
	&&+G_h(\phi_1^{n+1},\phi_2^{n+1})-G_h(\phi_1^n,\phi_2^n)\nonumber\\
	&&- \frac{2\chi_{12}+3\chi_{13}+\chi_{23}}{2}\left(\|\phi_1^n-\phi_1^{n-1}\|_2^2 - \|\phi_1^{n+1}-\phi_1^n\|_2^2\right)
	\nonumber\\
	&& - \frac{\chi_{13}+3\chi_{23}+2\chi_{12}}{2} \left(\|\phi_2^n-\phi_2^{n-1}\|_2^2-\|\phi_2^{n+1}-\phi_2^n\|_2^2\right) \nonumber\\
	\leq&&  (3\chi_{13}+3\chi_{23}+2\chi_{12})\left(\nrm{ \phi_1^{n+1}-\phi_1^n }_2^2+\nrm{ \phi_2^{n+1}-\phi_2^n }_2^2\right).\label{energy_process}
\end{eqnarray}
For the right hand side of \eqref{energy_process}, we have
     \begin{eqnarray*}
	\nrm{\phi_1^{n+1}-\phi_1^n}_2^2 
	&\leq& \frac{\dt}{2\alpha}\nrm{\nabla_h(\phi_1^{n+1}-\phi_1^n)}_2^2 + \frac{\alpha\dt}{2}\nrm{\frac{\phi_1^{n+1}-\phi_1^n}{\dt}}_{-1,h}^2,
\end{eqnarray*}
and
     \begin{eqnarray*}
	\nrm{\phi_2^{n+1}-\phi_2^n}_2^2 
	&\leq& \frac{\dt}{2\beta}\nrm{\nabla_h(\phi_2^{n+1}-\phi_2^n)}_2^2 + \frac{\beta\dt}{2}\nrm{\frac{\phi_2^{n+1}-\phi_2^n}{\dt}}_{-1,h}^2.
\end{eqnarray*}
Let $\alpha = \frac{3\chi_{13}+3\chi_{23}+2\chi_{12}}{2A_1}$, $ \beta =\frac{3\chi_{13}+3\chi_{23}+2\chi_{12}}{2A_2}$, when $A_1 \ge \frac{\mathcal{M}_1(3\chi_{13}+3\chi_{23}+2\chi_{12})^2}{4}$, $A_2 \ge \frac{\mathcal{M}_2(3\chi_{13}+3\chi_{23}+2\chi_{12})^2}{4}$, we have
	\begin{equation*}
	E_h^{n+1,n} \leq E_h^{n,n-1}.
\end{equation*}
This completes the proof.
\end{proof}

In fact, for the fourth term of the right hand side of \eqref{MMC3term_BDF2}, it can be analyzed in another way:
\begin{align}
	&\langle  \delta_{\phi_1} G_{h,e}(\phi_1^n,\phi_2^n)  - \delta_{\phi_1} G_{h,e}(\hat{\phi}_1^n,\hat{\phi}_2^n) , \phi_1^{n+1}-\phi_1^n \rangle\nonumber\\
	=&-2\chi_{13}\langle \phi_1^n-\phi_1^{n-1}, \phi_1^{n+1}-\phi_1^n \rangle + (\chi_{12}-\chi_{13}-\chi_{23})\langle \phi_2^n-\phi_2^{n-1}, \phi_1^{n+1}-\phi_1^n \rangle\nonumber\\
	=&
	(\chi_{12}-\chi_{13}-\chi_{23})\langle \phi_2^n-\phi_2^{n-1}, \phi_1^{n+1}-\phi_1^n \rangle\nonumber\\	
	&- \chi_{13}(\|\phi_1^n-\phi_1^{n-1}\|_2^2-\|\phi_1^{n+1}-2\phi_1^n+\phi_1^{n-1}\|_2^2+\|\phi_1^{n+1}-\phi_1^n\|_2^2).\label{7}
\end{align}
Similarly, the following estimate is valid for $\phi_2$: 
\begin{align}
	&\langle  \delta_{\phi_2} G_{h,e}(\phi_1^n,\phi_2^n)  - \delta_{\phi_2} G_{h,e}(\hat{\phi}_1^n,\hat{\phi}_2^n) , \phi_2^{n+1}-\phi_2^n \rangle\nonumber\\
	=&
	(\chi_{12}-\chi_{13}-\chi_{23})\langle \phi_1^n-\phi_1^{n-1}, \phi_2^{n+1}-\phi_2^n \rangle\nonumber\\	
	&- \chi_{23}(\|\phi_2^n-\phi_2^{n-1}\|_2^2-\|\phi_2^{n+1}-2\phi_2^n+\phi_2^{n-1}\|_2^2+\|\phi_2^{n+1}-\phi_2^n\|_2^2).\label{9}
\end{align}
Meanwhile, the following vector norms are introduced: 
\begin{align*}
	&\|\phi^{n+1}-\phi^n\|_{-1,h}^2:=\|\phi_1^{n+1}-\phi_1^n\|_{-1,h}^2 + \|\phi_2^{n+1}-\phi_2^n\|_{-1,h}^2,\\
	&\|\phi^{n+1}-\phi^n\|_2^2:=\|\phi_1^{n+1}-\phi_1^n\|_2^2 + \|\phi_2^{n+1}-\phi_2^n\|_2^2.
\end{align*}
For simplify, we set the mobility parameters  ${\mathcal{M}_1}={\mathcal{M}_2}=1$ in the model.
Substituting \eqref{8}, \eqref{7} and \eqref{9} into \eqref{MMC3term_BDF2}, and applying Lemma \ref{Full-discrete-energy-splitting}, we obtain
\begin{small}
\begin{align}
	&\frac{1}{\dt} \left(\frac{5}{4}\nrm{\phi^{n+1}-\phi^n}_{-1,h}^2 - \frac{1}{4}\nrm{\phi^n-\phi^{n-1}}_{-1,h}^2\right) +G_{h}(\phi_1^{n+1},\phi_2^{n+1}) - 
	G_{h}(\phi_1^n,\phi_2^n)\nonumber\\
	&+ A_1 \dt \nrm{\nabla_h(\phi_1^{n+1}-\phi_1^n)}_2^2+ A_2 \dt \nrm{\nabla_h(\phi_2^{n+1}-\phi_2^n)}_2^2\nonumber\\
	&-\chi_{13}(\|\phi_1^n-\phi_1^{n-1}\|_2^2-\|\phi_1^{n+1}-\phi_1^n\|_2^2)
	-\chi_{23}(\|\phi_2^n-\phi_2^{n-1}\|_2^2-\|\phi_2^{n+1}-\phi_2^n\|_2^2)\nonumber\\
	\leq &-(\chi_{12}-\chi_{13}-\chi_{23})\langle \phi_2^n-\phi_2^{n-1}, \phi_1^{n+1}-\phi_1^n \rangle-(\chi_{12}-\chi_{13}-\chi_{23})\langle \phi_1^n-\phi_1^{n-1}, \phi_2^{n+1}-\phi_2^n \rangle
	\nonumber\\
	&+ 2\chi_{13}\|\phi_1^{n+1}-\phi_1^n\|_2^2 + 2\chi_{23}\|\phi_2^{n+1}-\phi_2^n\|_2^2.\label{11}
\end{align}
\end{small}
For the first term of the right hand of \eqref{11}, it is observed that 
\begin{align}
	&-(\chi_{12}-\chi_{13}-\chi_{23})\langle \phi_2^n-\phi_2^{n-1}, \phi_1^{n+1}-\phi_1^n \rangle\nonumber\\
	\leq&|\chi_{12}-\chi_{13}-\chi_{23}|\left(\frac{\dt}{2\beta_1}\|\nabla_h(\phi_1^{n+1}-\phi_1^n)\|_2^2 + \frac{\beta_1}{2\dt}\|\phi_2^n-\phi_2^{n-1}\|_{-1,h}^2\right).\label{12}
\end{align}
Similarly, for the second term of the right hand of \eqref{11}, the following inequality is available: 
\begin{align}
	&-(\chi_{12}-\chi_{13}-\chi_{23})\langle \phi_1^n-\phi_1^{n-1}, \phi_2^{n+1}-\phi_2^n \rangle\nonumber\\
	\leq&|\chi_{12}-\chi_{13}-\chi_{23}|\left(\frac{\dt}{2\beta_2}\|\nabla_h(\phi_2^{n+1}-\phi_2^n)\|_2^2 + \frac{\beta_2}{2\dt}\|\phi_1^n-\phi_1^{n-1}\|_{-1,h}^2\right).\label{13}
\end{align}
For the third term of the right hand of \eqref{11}, we see that 
\begin{equation}
	2\chi_{13}\|\phi_1^{n+1}-\phi_1^n\|_2^2
	\leq \chi_{13}\left(\frac{\dt}{\alpha_1}\|\nabla_h(\phi_1^{n+1}-\phi_1^n)\|_2^2 + \frac{\alpha_1}{\dt}\|\phi_1^{n+1}-\phi_1^n\|_{-1,h}^2.
	\right).\label{14}
\end{equation}
A similar estimate could also be derived for the fourth term of the right hand of \eqref{11}: 
\begin{equation}
	2\chi_{23}\|\phi_2^{n+1}-\phi_2^n\|_2^2
	\leq \chi_{23}\left(\frac{\dt}{\alpha_2}\|\nabla_h(\phi_2^{n+1}-\phi_2^n)\|_2^2 + \frac{\alpha_2}{\dt}\|\phi_2^{n+1}-\phi_2^n\|_{-1,h}^2.
	\right).\label{15}
\end{equation}
Subsequently, the following constant quantities are denoted: 
\begin{align*}
	&\alpha_1=\frac{1}{2\chi_{13}}, \quad \beta_2=\frac{1}{|\chi_{12}-\chi_{13}-\chi_{23}|},\\
	&\alpha_2=\frac{1}{2\chi_{23}}, \quad \beta_1=\frac{1}{|\chi_{12}-\chi_{13}-\chi_{23}|} , 	
\end{align*}
and 
\begin{equation*}
	A_1=\frac{\chi_{13}}{\alpha_1}+\frac{|\chi_{12}-\chi_{13}-\chi_{23}|}{2\beta_1},\quad A_2=\frac{\chi_{23}}{\alpha_2}+\frac{|\chi_{12}-\chi_{13}-\chi_{23}|}{2\beta_2}.	
\end{equation*}
A careful calculation reveals that 
\begin{equation*}
	A_1=2\chi_{13}^2+0.5(\chi_{12}-\chi_{13}-\chi_{23})^2,\quad A_2=2\chi_{23}^2+0.5(\chi_{12}-\chi_{13}-\chi_{23})^2.	
\end{equation*}
In turn, with an introduction of the following quantity 
\begin{align*}
	F_h^{n+1}:=&G_h(\phi_1^{n+1},\phi_2^{n+1}) + \frac{3}{4\dt}\left(\|\phi_1^{n+1}-\phi_1^n\|_{-1,h}^2+\|\phi_2^{n+1}-\phi_2^n\|_{-1,h}^2\right)\\
	& +\chi_{13}\|\phi_1^{n+1}-\phi_1^n\|_2^2 + \chi_{23}\|\phi_2^{n+1}-\phi_2^n\|_2^2 , 
\end{align*}
a substitution of \eqref{12}-\eqref{15} into \eqref{11} results in 
\begin{equation*}
	F_h^{n+1} \leq F_h^n.
\end{equation*}
\section{Optimal rate convergence analysis in $\ell^\infty (0,T; H_h^{-1}) \cap \ell^2 (0,T; H_h^1)$} \label{sec:convergence}

Now we proceed into the convergence analysis.
Let $\Phi_1$, $\Phi_2$ be the exact solution for the ternary Cahn-Hilliard flow \eqref{MMC3term-equation}-\eqref{mu_2}. With sufficiently regular initial data, we could assume that the exact solution has regularity of class $\mathcal{R}$:
\begin{equation*}
	\Phi_1, \Phi_2 \in \mathcal{R} := H^5 \left(0,T; C_{\rm per}(\Omega)\right) \cap H^4 \left(0,T; C^2_{\rm per}(\Omega)\right) \cap L^\infty \left(0,T; C^6_{\rm per}(\Omega)\right).
\end{equation*}
In addition, we assume that the following separation property is valid for the exact solution: for some $\delta_0$, 
\begin{equation}
	(\Phi_1,\Phi_2) \in \mathcal{G}_{\delta_0},
	\label{assumption:separation}
\end{equation}
which is satisfied at a point-wise level, for all $t\in[0,T]$. Define $\Phi_{1,N} (\, \cdot \, ,t) := {\cal P}_N \Phi_1 (\, \cdot \, ,t)$, $\Phi_{2,N} (\, \cdot \, ,t) := {\cal P}_N \Phi_2 (\, \cdot \, ,t)$, the (spatial) Fourier projection of the exact solution into ${\cal B}^K$, the space of trigonometric polynomials of degree up to and including $K$ (with $N=2K +1$).  The following projection approximation is standard: if $\Phi_j \in L^\infty(0,T;H^\ell_{\rm per}(\Omega))$, for some $\ell\in\mathbb{N}$,
\begin{equation}
	\nrm{\Phi_{j,N} - \Phi_j}_{L^\infty(0,T;H^k)}
	\le C h^{\ell-k} \nrm{\Phi_j }_{L^\infty(0,T;H^\ell)},  \quad \forall \ 0 \le k \le \ell ,
	\, \, j= 1, 2. \label{projection-est-0}
\end{equation}
By $\Phi_{j,N}^m$, $\Phi_j^m$ we denote $\Phi_{j,N} (\, \cdot \, , t_m)$ and $\Phi_j (\, \cdot \, , t_m)$, respectively, with $t_m = m\cdot \dt,~ j = 1,2$. 

Since the exact solution has regularity of class $\mathcal{R}$, the separation property \eqref{assumption:separation} for the exact solution and the projection approximation \eqref{projection-est-0}, we are able to obtain a discrete $W^{1,\infty}$ bound and the separation property for the projection of the exact solution:
\begin{equation*}
	\| \nabla_h \Phi_{j,N} \|_\infty \le C^* ,\quad j=1, 2,\quad (\Phi_{1,N},\Phi_{2,N}) \in \mathcal{G}_{\delta_0}  .
\end{equation*}
Since $\Phi_{j,N} \in {\cal B}^K$, the mass conservative property is available at the discrete level:
\begin{equation}
	\overline{\Phi_{j,N}^m} = \frac{1}{|\Omega|}\int_\Omega \, \Phi_{j,N} ( \cdot, t_m) \, d {\bf x} = \frac{1}{|\Omega|}\int_\Omega \, \Phi_{j,N} ( \cdot, t_{m-1}) \, d {\bf x} = \overline{\Phi_{j,N}^{m-1}} ,  \quad \forall \ m \in\mathbb{N}.
	\label{mass conserv-1}
\end{equation}
On the other hand, the solution of~\eqref{Full-discrete-1}-\eqref{scheme-CH_initial} is also mass conservative at the discrete level:
\begin{equation}
	\overline{\phi_j^m} = \overline{\phi_j^{m-1}} ,  \quad \forall \ m \in \mathbb{N} , \, \,
	j = 1, 2 .
	\label{mass conserv-2}
\end{equation}
Defined the grid projection operator, ${\mathcal P}_h : C_{\rm per}^0(\Omega) \rightarrow{\mathcal C}_{\rm per}$, by ${\mathcal P}_h f_{i,j} = f(p_i,p_j)$, for all $f\in C_{\rm per}^0(\Omega)$. For the initial data, we have  $\phi_j^0 = {\mathcal P}_h \Phi_{j,N} (\, \cdot \, , t=0)$, that is
\begin{equation*}
	(\phi_1^0)_{i,j} := \Phi_{1,N} (p_i, p_j, t=0) , \, \, \,
	(\phi_2^0)_{i,j} := \Phi_{2,N} (p_i, p_j, t=0) .
\end{equation*}
The error grid function is defined as
\begin{equation}
	e_1^m := \mathcal{P}_h \Phi_{1,N}^m - \phi_1^m ,  \, \, \,
	e_2^m := \mathcal{P}_h \Phi_{2,N}^m - \phi_2^m , \quad \forall \ m \in \left\{ 0 ,1 , 2, 3, \cdots \right\} .
	\label{error function-1}
\end{equation}
Therefore, it follows that  $\overline{e_j^m} =0$, for any $m \in \left\{ 0 ,1 , 2, 3, \cdots \right\}$,  $j=1, 2$, so the discrete norm $\nrm{ \, \cdot \, }_{-1,h}$ is well defined for the error grid functions $e_1^m$ and $e_2^m$. Before proceeding into the convergence analysis, we introduce a new norm $\nrm{\, \cdot \, }_{-1,G}$~\cite{Dong2018Convergence}. Let $\Omega$ be an arbitrary bounded domain and $\mathbf{p} = [u,v]^T \in [L^2(\Omega)]^2$. We define $\nrm{\, \cdot \, }_{-1,G}$ to be a weighted inner product
\begin{equation}
	\nrm{ \mathbf{p} }_{-1,G}^2 = (\mathbf{p},G(-\Delta_h)^{-1}\mathbf{p}),\quad G = \left(
	\begin{array}{cc}
		\frac{1}{2} & -1 \\
		-1 & \frac{5}{2} \\
	\end{array}
	\right).	
	\nonumber
\end{equation}
Since G is symmetric positive definite, the norm is well-defined. Moreover,
\[
G = \left(
\begin{array}{cc}
	\frac{1}{2} & -1 \\
	-1 & \frac{5}{2} \\
\end{array}
\right)
=  \left(
\begin{array}{cc}
	\frac{1}{2} & -1 \\
	-1 & 2 \\
\end{array}
\right)
+ \left(
\begin{array}{cc}
	0 & 0 \\
	0 & \frac{1}{2} \\
\end{array}
\right)
=:G_1 + G_2.
\]
By the positive semi-definiteness of $G_1$, we immediately have
\[
\nrm{ \mathbf{p} }_{-1,G}^2 = (\mathbf{p},(G_1+G_2)(-\Delta_h)^{-1}\mathbf{p}) \geq (\mathbf{p},G_2(-\Delta_h)^{-1}\mathbf{p}) = \frac{1}{2} \nrm{v}_{-1,h}^2.
\]
In addition, for any $v_i \in \mathring{\mathcal C}_{\rm per},i = 1,2$, the following equality is valid:
\begin{align}
	&\langle \frac{3}{2}v_i^{n+1}-2v_i^n+\frac{1}{2}v_i^{n-1}, (-\Delta_h)^{-1}v_i^{n+1}\rangle \nonumber\\
	=& \frac{1}{2}(\nrm{ \mathbf{p}_i^{n+1} }_{-1,G}^2 - \nrm{ \mathbf{p}_i^n }_{-1,G}^2) + \frac{\nrm{v_i^{n+1}-2v_i^n+v_i^{n-1} }_{-1,h}^2}{4},\label{BDF2_term_estimate}
\end{align}
with $\mathbf{p}_i^{n+1} = [v_i^n,v_i^{n+1}]^T, \mathbf{p}_i^n = [v_i^{n-1},v_i^n]^T$.

The optimal rate convergence result is stated in the following theorem.
\begin{thm}
	\label{thm:convergence}
	Given initial data $\Phi_j(\, \cdot \, ,t=0) \in C^6_{\rm per}(\Omega)$ and $\Phi_j(\, \cdot \, ,t=0)\in\mathcal{G}$, point-wise, suppose the exact solution for ternary MMC flow~\eqref{MMC3term-equation}-\eqref{mu_2} is of regularity class $\mathcal{R}$. Then, provided $\dt$ and $h$ are sufficiently small, and under the linear refinement requirement $C_1 h \le \dt \le C_2 h$, we have
	\begin{equation}
		\| e_j^n \|_{-1,h} +  \Bigl( \frac{\varepsilon_0^2}{36} \dt   \sum_{m=1}^{n} \| \nabla_h e_j^m \|_2^2 \Bigr)^{1/2}  \le C ( \dt^2 + h^2 ),  \quad \varepsilon_0 = \min (\varepsilon_1^2, \varepsilon_2^2, \varepsilon_3^2 ) ,  \, \, \, j=1, 2 ,
		\label{convergence-0}
	\end{equation}
	for all positive integers $n$, such that $t_n=n\dt \le T$, where $n \geq 1$. $C>0$ is independent of $\dt$ and $h$.
\end{thm}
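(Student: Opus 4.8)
The plan is to follow the three-stage program that has become standard for BDF2 schemes equipped with a Douglas--Dupont regularization: (i) a higher-order consistency analysis, producing a supplementary approximate profile $(\check\Phi_1,\check\Phi_2)$ that satisfies the fully discrete scheme \eqref{Full-discrete-1}--\eqref{Full-discrete-mu2} up to an $O(\dt^3+h^4)$ residual; (ii) a \emph{rough} error estimate, yielding a preliminary, slightly suboptimal convergence rate that is strong enough — via an inverse inequality and the linear refinement path $C_1 h\le\dt\le C_2 h$ — to pin down a uniform $\ell^\infty$ bound on $\phi_j^n$ together with a strict separation from the boundary of the Gibbs triangle; and (iii) a \emph{refined} error estimate in the $\nrm{\cdot}_{-1,h}$ norm, exploiting the convexity of $G_{h,c}$ and the monotonicity of the logarithmic part, which finally delivers the optimal $O(\dt^2+h^2)$ rate in \eqref{convergence-0}.

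\textbf{Step 1: higher-order consistency.} A direct Taylor expansion of $(\Phi_{1,N},\Phi_{2,N})$ in \eqref{Full-discrete-1}--\eqref{Full-discrete-mu2} leaves a residual of size $O(\dt^2+h^2)$, but the second-order Adams--Bashforth treatment of the concave $H$-terms and the explicit regularization $A_i\dt\Delta_h(\phi_i^{n+1}-\phi_i^n)$ each contribute pieces that the leading-order energy estimate cannot absorb at the $O(\dt^2)$ level. I would therefore introduce correction fields and set $\check\Phi_j := \Phi_{j,N} + \dt^2\Phi_{j,\dt}^{(1)} + \dt^3\Phi_{j,\dt}^{(2)} + h^2\Phi_{j,h}^{(1)}$, $j=1,2$, where the smooth auxiliary profiles solve linearized Cahn--Hilliard-type equations whose source terms are obtained by matching powers of $\dt$ and $h$ in the truncation error. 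Since $\Phi_j\in\mathcal{R}$, these corrections are uniformly bounded in the relevant $C^k$ norms, so $(\check\Phi_1,\check\Phi_2)$ remains in $\mathcal{G}_{\delta_0/2}$ pointwise with a uniform discrete $W^{1,\infty}$ bound for $\dt,h$ small, and by construction it satisfies \eqref{Full-discrete-1}--\eqref{Full-discrete-mu2} (and the initialization \eqref{scheme-CH_initial}) with a consistency residual of order $O(\dt^3+h^4)$ measured in $\nrm{\cdot}_{-1,h}$.

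\textbf{Step 2: rough error estimate.} Writing $\check e_j^m := \mathcal{P}_h\check\Phi_j^m-\phi_j^m$ (still of mean zero), the next step is to subtract the scheme satisfied by $\check\Phi_j$ from that satisfied by $\phi_j$, and to pair the $\phi_j$ error equations with $\frac{1}{\mathcal{M}_j}\mathcal{L}^{-1}$ of the BDF2 combination of $\check e_j$, and the chemical-potential error relations with the same BDF2 combination. Because $\phi_j^m\in\mathcal{G}$ (Theorem~\ref{MMC-positivity}), the logarithmic differences and the convex surface-diffusion differences already carry the favorable sign by monotonicity, while the explicit terms, the regularization and the consistency residual are bounded crudely, using only $0<\phi_j^m<1$. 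Combined with identity \eqref{BDF2_term_estimate}, the weighted norm $\nrm{\cdot}_{-1,G}$, and a discrete Gronwall argument, this produces a preliminary bound $\nrm{\check e_j^{n+1}}_{-1,h} + \bigl(\dt\sum_{m=1}^{n+1}\nrm{\nabla_h\check e_j^m}_2^2\bigr)^{1/2}\le \tilde C(\dt^{7/4}+h^{7/4})$ — any rate above the threshold needed below would do. The discrete inverse inequality $\nrm{\check e_j^{n+1}}_\infty\le C h^{-1}\nrm{\check e_j^{n+1}}_2$ together with the interpolation $\nrm{\check e_j^{n+1}}_2\le C\nrm{\check e_j^{n+1}}_{-1,h}^{1/2}\nrm{\nabla_h\check e_j^{n+1}}_2^{1/2}$ and the refinement constraint $C_1 h\le\dt\le C_2 h$ then force $\nrm{\check e_j^{n+1}}_\infty\to 0$, hence $\phi_j^{n+1}\in\mathcal{G}_{\delta_0/4}$ and $\nrm{\nabla_h\phi_j^{n+1}}_\infty\le\tilde C^*$ for $\dt,h$ sufficiently small. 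This separation-and-$W^{1,\infty}$ control is the essential output of the rough step.

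\textbf{Step 3: refined estimate, and the main obstacle.} Once the separation $\delta_0/4<\phi_j^{n+1}$ and $\phi_1^{n+1}+\phi_2^{n+1}<1-\delta_0/4$ is in hand, every singular nonlinearity is Lipschitz with a mesh-independent constant. Repeating the pairing of Step 2 — now extracting sharp bounds — the BDF2 term telescopes through \eqref{BDF2_term_estimate} and $\nrm{\cdot}_{-1,G}$; the monotone logarithmic contributions are nonnegative up to an $O(\dt^3+h^4)$ perturbation; the explicit concave and cross terms are controlled by Cauchy--Schwarz and Young's inequality exactly as in Theorem~\ref{MMC-energy stab}, with the implicit regularization supplying the $A_i\dt\nrm{\nabla_h\check e_i^{n+1}}_2^2$ that absorbs the gradient terms (this is precisely what fixes the threshold values of $A_i$); and, crucially, the difference of the convex parts $\langle\delta_{\phi_j}G_{h,c}(\check\Phi^{n+1})-\delta_{\phi_j}G_{h,c}(\phi^{n+1}),\check e_j^{n+1}\rangle$ must be bounded below by $c\,\varepsilon_0^2\nrm{\nabla_h\check e_j^{n+1}}_2^2$ minus controllable lower-order terms, supplying the dissipation in \eqref{convergence-0}. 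Obtaining this last lower bound is the hard part: one has to expand $\delta_{\phi_j}G_{h,c}$ term by term (the pieces $Q_2,\dots,Q_9$), isolating the genuinely dissipative contribution $\sim\langle\kappa(\mathcal{A}_h\phi^{n+1})\nabla_h\check e_j^{n+1},\nabla_h\check e_j^{n+1}\rangle$ from the residuals in which the singular coefficients $\kappa(\phi)=\frac1{36\phi}$ and $\kappa'(\phi)$ are differenced across $\check\Phi^{n+1}$ and $\phi^{n+1}$; controlling these residuals requires simultaneously the pointwise separation (to bound $\kappa,\kappa'$ and their increments uniformly) and the discrete $W^{1,\infty}$ bounds from Step 2, which is exactly why the rough estimate cannot be bypassed. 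Summing over $m$ and applying the discrete Gronwall inequality then yields \eqref{convergence-0}, the initialization step being treated analogously starting from \eqref{scheme-CH_initial}.
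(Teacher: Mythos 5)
Your overall architecture --- higher-order consistency expansion, rough estimate giving $\ell^\infty$ control and the separation property, refined $H_h^{-1}$ estimate closed by Gronwall --- is exactly the paper's program, and Steps 1 and 3 are essentially right. The genuine gap is in Step 2, in the choice of test functions. You pair the error equation with $\frac{1}{\mathcal{M}_j}\mathcal{L}^{-1}$ of the BDF2 increment and the chemical-potential error relation with that same increment. This is the energy-stability pairing, and it fails here for two reasons. First, monotonicity of the logarithm and convexity of the surface part give a sign only for $\langle f(\check\Phi_j^{n+1})-f(\phi_j^{n+1}),\,\check\Phi_j^{n+1}-\phi_j^{n+1}\rangle$; tested against $3\check e_j^{n+1}-4\check e_j^n+\check e_j^{n-1}$ the favorable sign is lost, so your claim that these terms ``already carry the favorable sign by monotonicity'' does not hold for your pairing. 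Second, this pairing produces no dissipation term of the form $\dt\,\|\nabla_h\tilde\mu_j^{n+1}\|_2^2$ on the left-hand side, so the history contributions $\langle 4\check e_j^n-\check e_j^{n-1},\tilde\mu_j^{n+1}\rangle$ --- which contain second-order difference operators applied to $\check e_j^{n+1}$ inside $\tilde\mu_j^{n+1}$ --- cannot be absorbed by anything. The paper instead tests the error equation directly with $\tilde\mu_j^{n+1}$: this yields $3\langle\check e_j^{n+1},\tilde\mu_j^{n+1}\rangle + 2\dt\,\mathcal{M}_j\|\nabla_h\tilde\mu_j^{n+1}\|_2^2$ on the left, the first term being bounded below by $c\|\check e_j^{n+1}\|_2^2$ via the mean value theorem applied to the logarithm (using only that the intermediate value lies in $(0,1)$) together with the nonnegativity of the convex surface contribution, and the second term absorbing the history and truncation terms. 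That is the only place from which the rough $\ell^2$ bound, hence the $\ell^\infty$ bound and the separation property, actually come.

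A second, related omission: the rough estimate is not a global-in-time Gronwall estimate --- its right-hand side carries a factor $\dt^{-1}$ against $\|\check e_j^n\|_{-1,h}^2$, so summation is useless and its output ($O(\dt^{5/4}+h^{5/4})$ in $\ell^2$ in the paper, not your claimed $O(\dt^{7/4}+h^{7/4})$ with a gradient sum) is strictly weaker than what is fed in. The argument must therefore be organized as an induction: one posits the a priori bound $\|\check e_j^k\|_2\le\dt^{7/4}+h^{7/4}$ at $k=n-1,n$, uses it in the rough step, and recovers it at $k=n+1$ only from the refined estimate, via the $\ell^2(0,T;H_h^1)$ control and the discrete Poincar\'e inequality. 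You never state the a priori hypothesis or its recovery, so your induction loop is not closed. (Your extra $\dt^3$ correction field in Step 1 is harmless but unnecessary under the linear refinement path; the paper's single $\dt^2$ temporal and $h^2$ spatial corrections already give the needed $O(\dt^3+h^4)$ consistency.)
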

\subsection{Higher order consistency analysis} \label{subsec: consistency} 

A direct substitution of the projection solution $\Phi_{j,N}$ into the numerical scheme \eqref{Full-discrete-1}-\eqref{Full-discrete-mu2} gives the second order accuracy in both time and space. However, due to  the explicit part of the extra regularization term, this leading local truncation error will not be sufficient to recover an $L_{\dt}^\infty$ bound of the discrete temporal derivative of the numerical solution, which is needed in the nonlinear convergence analysis. This technique has been reported for a wide class of nonlinear PDEs, such as incompressible fluid flow, various gradient models, the porous medium equation based on the energetic variational approach, nonlinear wave equation, et cetera. Such a higher order consistency result and the detailed proof is stated below.
\begin{proposition}
	Given the exact solution $\Phi_j$ for the ternary MMC system \eqref{MMC3term-equation}-\eqref{mu_2} and its Fourier projection $\Phi_{j,N}$. There exist auxiliary fields, $\Phi_{j,\dt},\Phi_{j,h}$, so that the following
\begin{equation}
	 \check{\Phi}_j = \Phi_{j,N} + {\cal P}_N (\dt^2 \Phi_{j,\dt} + h^2 \Phi_{j,h}),
	\quad j =1, 2 ,
	\label{consistency-1}
\end{equation}	
satisfies the numerical scheme up to a higher $O(\Delta t^3 + h^4)$ consistency:
\begin{align}
	&\frac{3\check{\Phi}_1^{n+1} - 4\check{\Phi}_1^n+\check{\Phi}_1^{n-1}}{2\dt} \nonumber\\
	=&  	
	\mathcal{M}_1 \Delta_h \biggl(  \frac{1}{M_0}\ln \frac{\alpha \check{\Phi}_1^{n+1}}{M_0} - \ln(1- \check{\Phi}_1^{n+1} - \check{\Phi}_2^{n+1}) - 2 \chi_{13} (2\check{\Phi}_1^n-\check{\Phi}_1^{n-1})  \nonumber
	\\
	& 
	+(\chi_{12}-\chi_{13}-\chi_{23}) (2\check{\Phi}_2^n-\check{\Phi}_2^{n-1}) - \frac{\varepsilon_1^2}{36} {\cal A}_h \Bigl( \frac{| \nabla_h \check{\Phi}_1^{n+1} |^2}{ ( {\cal A}_h \check{\Phi}_1^{n+1} )^2}  \Bigr)
	\nonumber
	\\
	& 
	- \frac{\varepsilon_1^2}{18} \nabla_h \cdot \Bigl( \frac{\nabla_h \check{\Phi}_1^{n+1}}{{\cal A}_h \check{\Phi}_1^{n+1} } \Bigr) + \frac{\varepsilon_3^2}{36} {\cal A}_h \Bigl( \frac{| \nabla_h ( 1 - \check{\Phi}_1^{n+1} - \check{\Phi}_2^{n+1}) |^2}{ ( {\cal A}_h (1 - \check{\Phi}_1^{n+1} - \check{\Phi}_2^{n+1} ) )^2}  \Bigr) 
	\nonumber\\
	&  
	+ \frac{\varepsilon_3^2}{18} \nabla_h \cdot \Bigl( \frac{\nabla_h (1 - \check{\Phi}_1^{n+1} - \check{\Phi}_2^{n+1}) }{{\cal A}_h (1 - \check{\Phi}_1^{n+1} - \check{\Phi}_2^{n+1}) } \Bigr) + A_1\Delta t \Delta_h(\check{\Phi}_1^{n+1} - \check{\Phi}_1^n ) \biggr) \nonumber\\
	& 
	+ \tau_1^{n+1} ,
	\label{consistency-0-1}
	\\
	&\frac{3\check{\Phi}_2^{n+1} - 4\check{\Phi}_2^n+\check{\Phi}_2^{n-1}}{2\dt}\nonumber\\
	= & 	
	\mathcal{M}_2 \Delta_h \biggl(  \frac{1}{N_0} \ln \frac{\beta \check{\Phi}_2^{n+1}}{N_0} - \ln(1- \check{\Phi}_1^{n+1} - \check{\Phi}_2^{n+1}) - 2 \chi_{23} (2\check{\Phi}_2^n-\check{\Phi}_2^{n-1})   \nonumber
	\\
	& 
	+(\chi_{12}-\chi_{13}-\chi_{23}) (2\check{\Phi}_1^n-\check{\Phi}_1^{n-1}) - \frac{\varepsilon_2^2}{36} {\cal A}_h \Bigl( \frac{| \nabla_h \check{\Phi}_2^{n+1} |^2}{ ( {\cal A}_h \check{\Phi}_2^{n+1} )^2}  \Bigr)
	\nonumber
	\\
	& 
	- \frac{\varepsilon_2^2}{18} \nabla_h \cdot \Bigl( \frac{\nabla_h \check{\Phi}_2^{n+1}}{{\cal A}_h \check{\Phi}_2^{n+1} } \Bigr) + \frac{\varepsilon_3^2}{36} {\cal A}_h \Bigl( \frac{| \nabla_h ( 1 - \check{\Phi}_1^{n+1} - \check{\Phi}_2^{n+1}) |^2}{ ( {\cal A}_h (1 - \check{\Phi}_1^{n+1} - \check{\Phi}_2^{n+1} ) )^2}  \Bigr)
	\nonumber\\
	&   
	+ \frac{\varepsilon_3^2}{18} \nabla_h \cdot \Bigl( \frac{\nabla_h (1 - \check{\Phi}_1^{n+1} - \check{\Phi}_2^{n+1}) }{{\cal A}_h (1 - \check{\Phi}_1^{n+1} - \check{\Phi}_2^{n+1}) } \Bigr) + A_2\Delta t \Delta_h(\check{\Phi}_2^{n+1} - \check{\Phi}_2^n ) \biggr) \nonumber\\
	& 
	+ \tau_2^{n+1}.
	\label{consistency-0-2}
\end{align}
with $\| \tau_1^{n+1} \|_{-1,h}$, $\| \tau_2^{n+1} \|_{-1,h} \le C (\dt^3 + h^4)$, $n \geq 1$. The constructed functions, $\Phi_{j,\dt}, \Phi_{j,h}$, depend solely on the exact solution $\Phi_j$, and their derivatives are bounded. For the initialization step, we have 
\begin{equation*}
	\check{\Phi}_j^0=\Phi_{j,N}^0,\quad \check{\Phi}_j^1=\Phi_{j,N}^1.
\end{equation*}	
\end{proposition}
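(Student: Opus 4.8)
\emph{Strategy.} The plan is a higher-order asymptotic expansion: rather than inserting the Fourier projection $\Phi_{j,N}$ of the exact solution directly, I construct two correction fields that annihilate the leading $O(\dt^2)$ (temporal) and $O(h^2)$ (spatial) parts of the local truncation error, so that the corrected profile $\check\Phi_j$ obeys the scheme up to $O(\dt^3+h^4)$. Two structural facts are used throughout: $\Phi_{j,N}\in\mathcal{B}^K$ is band-limited and, by \eqref{projection-est-0}, all of its Sobolev and finite-difference norms are controlled by those of $\Phi_j$; and the separation assumption \eqref{assumption:separation}, transferred to $\Phi_{j,N}$ (hence to $\check\Phi_j$) for $h$ small, keeps $\Phi_1,\Phi_2,1-\Phi_1-\Phi_2$ uniformly bounded below, so every logarithmic and reciprocal nonlinearity appearing in $\mu_1,\mu_2$ — including the surface-diffusion coefficients $1/(36\phi_i)$ — is smooth with bounded derivatives along the solution and along the corrected profile.

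First I would insert $\Phi_{j,N}$ into \eqref{Full-discrete-1}--\eqref{Full-discrete-mu2} and expand everything consistently. In time, $\frac{3u^{n+1}-4u^n+u^{n-1}}{2\dt}=\partial_t u^{n+1}-\tfrac{\dt^2}{3}\partial_t^3 u^{n+1}+O(\dt^3)$ and $2u^n-u^{n-1}=u^{n+1}-\dt^2\partial_t^2 u^{n+1}+O(\dt^3)$, while the Douglas--Dupont term obeys $A_j\dt\Delta_h(u^{n+1}-u^n)=A_j\dt^2\Delta_h\partial_t u^{n+1}+O(\dt^3)$, i.e. it is itself an $O(\dt^2)$ perturbation — which is exactly why a naive $O(\dt^2)$ consistency is insufficient for the later rough estimate of $\|e_j\|_\infty$. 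In space, each finite-difference building block of $\mu_1,\mu_2$ — the Laplacian, $\nabla_h\cdot(\nabla_h u/\mathcal{A}_h u)$, and $\mathcal{A}_h(|\nabla_h u|^2/(\mathcal{A}_h u)^2)$ for $u\in\{\Phi_{1,N},\Phi_{2,N},1-\Phi_{1,N}-\Phi_{2,N}\}$ — equals its continuous counterpart plus $h^2$ times an explicit differential expression plus $O(h^4)$. Collecting, the residual produced by $\Phi_{j,N}$ has the form $\dt^2 g_j^{(\dt)}+h^2 g_j^{(h)}+O(\dt^3+h^4+\dt^2h^2)$, where $g_j^{(\dt)}$ (built from $\partial_t^3\Phi_j$, the second-time-derivative contributions of $\partial_{\phi_i}H$ via the extrapolation $\hat{\phi}_i^n$, and $\Delta^2\partial_t\Phi_j$ from the regularizer) and $g_j^{(h)}$ (built from fourth-order spatial derivatives of $\Phi_j$) depend only on the exact solution, and the remaining $O(1)$ discrepancy between $\partial_t\Phi_{j,N}-\mathcal{M}_j\Delta\mu_j(\Phi_{1,N},\Phi_{2,N})$ and the exact PDE is spectrally small. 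I then define $(\Phi_{1,\dt},\Phi_{2,\dt})$ as the solution of the $2\times2$ linear system obtained by linearizing $\partial_t\phi_i=\mathcal{M}_i\Delta\mu_i$ about $(\Phi_1,\Phi_2)$, with forcing $\mp g_i^{(\dt)}$ and zero initial data, and likewise $(\Phi_{1,h},\Phi_{2,h})$ with forcing $\mp g_i^{(h)}$. Its fourth-order principal part is $-\mathcal{M}_i\Delta\nabla\cdot(\tfrac{\varepsilon_i^2}{18\Phi_i}\nabla\,\cdot\,)+\dots$, uniformly elliptic by the separation property, so the system is a well-posed linear parabolic system and standard regularity yields smooth solutions on $[0,T]$ with all derivatives bounded in terms of $\Phi_1,\Phi_2$.

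To close the estimate, set $\check\Phi_j=\Phi_{j,N}+\mathcal{P}_N(\dt^2\Phi_{j,\dt}+h^2\Phi_{j,h})$, insert it into the scheme, and Taylor-expand the nonlinearities about $\Phi_{j,N}$: the $O(1)$ part reproduces the PDE up to the residual $\dt^2 g_j^{(\dt)}+h^2 g_j^{(h)}$, the linearized action of the scheme on the corrections cancels precisely this residual (up to replacing discrete operators by continuous ones, an $O(h^2(\dt^2+h^2))$ error), the quadratic-in-corrections remainder is $O((\dt^2+h^2)^2)$, and the corrections contribute $A_j\dt\Delta_h(\dt^2(\Phi_{j,\dt}^{n+1}-\Phi_{j,\dt}^n)+h^2(\Phi_{j,h}^{n+1}-\Phi_{j,h}^n))=O(\dt^4+\dt^2h^2)$ to the explicit regularizer since these differences are $O(\dt)$. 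Using $\dt^2h^2\le\tfrac12(\dt^4+h^4)$ and $\|\cdot\|_{-1,h}\le C\|\cdot\|_\infty$ (valid because the residual is mean-zero by mass conservation), every leftover term is $O(\dt^3+h^4)$ in $\|\cdot\|_{-1,h}$, which is the asserted bound on $\tau_j^{n+1}$ for $n\ge1$. The correction fields vanish at $t=0$, so $\check\Phi_j^0=\Phi_{j,N}^0$, and they are $O(\dt)$ at $t_1$, so $\check\Phi_j^1$ differs from $\Phi_{j,N}^1$ only at an order that is absorbed by the separate, lower-order analysis of the initialization step \eqref{scheme-CH_initial}.

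The heart of the argument — and the main obstacle — is the bookkeeping in the expansion step: one must expand the genuinely nonlinear surface-diffusion difference operators $\mathcal{A}_h(|\nabla_h u|^2/(\mathcal{A}_h u)^2)$ and $\nabla_h\cdot(\nabla_h u/\mathcal{A}_h u)$ to second order in $h$, simultaneously track the three independent $O(\dt^2)$ sources (BDF2, the Adams--Bashforth extrapolation inside $\partial_{\phi_i}H$, and the Douglas--Dupont regularizer), and assemble $g_j^{(\dt)},g_j^{(h)}$ so that the resulting coupled linearized systems are well-posed with smooth solutions. Once the forcings and auxiliary fields are in hand, verifying that the cross terms and higher Taylor remainders are genuinely $O(\dt^3+h^4)$ is routine.
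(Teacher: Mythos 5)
Your proposal is correct and follows essentially the same route as the paper: a higher-order asymptotic expansion in which temporal and spatial correction fields solve linearized parabolic systems (well-posed thanks to the separation property) whose forcings cancel the $O(\dt^2)$ and $O(h^2)$ residuals left by the projection solution, after which the cross terms and quadratic remainders are absorbed into $O(\dt^3+h^4)$. The only differences are organizational: you construct both corrections in one shot by linearizing about the exact solution, whereas the paper first forms $\check{\Phi}_{j,1}=\Phi_{j,N}+\dt^2\mathcal{P}_N\Phi_{j,\dt}$ and then corrects the spatial residual by linearizing about that intermediate profile, and the paper simply assigns the discrete correction values to be zero at the levels $n=0,1$ so that $\check{\Phi}_j^1=\Phi_{j,N}^1$ holds exactly rather than merely up to $O(\dt^3)$.
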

\begin{proof}
In terms of the temporal discretization, the following local truncation error can be derived by a Taylor expansion in time, combined with the projection estimate \eqref{projection-est-0}:
\begin{align}
	&\frac{3\Phi_{1,N}^{n+1} - 4\Phi_{1,N}^n +\Phi_{1,N}^{n-1}}{2\dt} \nonumber\\
	= & 	
	\mathcal{M}_1 \Delta \biggl(  \frac{1}{M_0}\ln \frac{\alpha \Phi_{1,N}^{n+1}}{M_0} - \ln(1-\Phi_{1,N}^{n+1} - \Phi_{2,N}^{n+1}) - 2 \chi_{13} \hat{\Phi}_{1,N}^n   \nonumber
	\\
	&
	+(\chi_{12}-\chi_{13}-\chi_{23}) \hat{\Phi}_{2,N}^n - \frac{\varepsilon_1^2}{36} \frac{| \nabla \Phi_{1,N}^{n+1} |^2}{ ( \Phi_{1,N}^{n+1} )^2}
	- \frac{\varepsilon_1^2}{18} \nabla \cdot \Bigl( \frac{\nabla \Phi_{1,N}^{n+1}}{ \Phi_{1,N}^{n+1} } \Bigr)
   \nonumber
	\\
	&
	+ \frac{\varepsilon_3^2}{36} \Bigl( \frac{ | \nabla ( 1 - \Phi_{1,N}^{n+1} - \Phi_{2,N}^{n+1} ) |^2 }{ (1 - \Phi_{1,N}^{n+1} - \Phi_{2,N}^{n+1} )^2}  \Bigr) + \frac{\varepsilon_3^2}{18} \nabla \cdot \Bigl( \frac{\nabla (1 - \Phi_{1,N}^{n+1} - \Phi_{2,N}^{n+1} ) }{ (1 - \Phi_{1,N}^{n+1} - \Phi_{2,N}^{n+1} ) } \Bigr) \nonumber\\
	&
	+ A_1 \dt\Delta(\Phi_{1,N}^{n+1} - \Phi_{1,N}^n)  \biggr) 
	+ \dt^2 \g_1^{(0)} + O(\dt^3) + O (h^m) ,
	\label{consistency-2-1} \\
	&\frac{3\Phi_{2,N}^{n+1} - 4\Phi_{2,N}^n + \Phi_{2,N}^{n-1}}{2\dt} \nonumber\\
	 =&  \mathcal{M}_2 \Delta \biggl( \frac{1}{N_0}\ln \frac{\beta \Phi_{2,N}^{n+1} }{N_0} - \ln(1-\Phi_{1,N}^{n+1} - \Phi_{2,N}^{n+1} ) - 2 \chi_{23} \hat{\Phi}_{2,N}^n   \nonumber
	\\
	& 
	+(\chi_{12}-\chi_{13}-\chi_{23}) \hat{\Phi}_{1,N}^n - \frac{\varepsilon_2^2}{36} \frac{| \nabla \Phi_{2,N}^{n+1} |^2}{ ( \Phi_{2,N}^{n+1} )^2}
	- \frac{\varepsilon_2^2}{18} \nabla \cdot \Bigl( \frac{\nabla \Phi_{2,N}^{n+1}}{ \Phi_{2,N}^{n+1} } \Bigr)
   \nonumber
	\\
	& 
	+ \frac{\varepsilon_3^2}{36} \Bigl( \frac{ | \nabla ( 1 - \Phi_{1,N}^{n+1} - \Phi_{2,N}^{n+1} ) |^2 }{ (1 - \Phi_{1,N}^{n+1} - \Phi_{2,N}^{n+1} )^2}  \Bigr) + \frac{\varepsilon_3^2}{18} \nabla \cdot \Bigl( \frac{\nabla (1 - \Phi_{1,N}^{n+1} - \Phi_{2,N}^{n+1} ) }{ 1 - \Phi_{1,N}^{n+1} - \Phi_{2,N}^{n+1}  } \Bigr)  \nonumber\\
	&	+ A_2 \dt\Delta(\Phi_{2,N}^{n+1} - \Phi_{2,N}^n) \biggr)
	+ \dt^2 \g_2^{(0)} + O(\dt^3)  + O (h^m) .
	\label{consistency-2-2}
\end{align}
with the projection accuracy order $m\ge 4$. In fact, the spatial functions $\g_1^{(0)}, \g_2^{(0)}$ are smooth enough in the sense that their derivatives are bounded.

Subsequently, the leading order temporal correction function $\Phi_{j,\dt}$ turns out to be the solution of the following equations:
\begin{small}
\begin{align}
	&\partial_t \Phi_{1,\dt}\nonumber\\
	 =&  	
	\mathcal{M}_1 \Delta \biggl(  \frac{1}{M_0} \frac{\Phi_{1,\dt}}{\Phi_{1,N}} + \frac{\Phi_{1,\dt} + \Phi_{2,\dt} }{1-\Phi_{1,N} - \Phi_{2,N}} - 2 \chi_{13} \Phi_{1,\dt} +(\chi_{12}-\chi_{13}-\chi_{23}) \Phi_{2,\dt}  \nonumber
	\\
	& 
	+ \frac{\varepsilon_1^2}{36} \frac{2 | \nabla \Phi_{1,N} |^2 \Phi_{1,\dt} }{ \Phi_{1,N}^3}
	- \frac{\varepsilon_1^2}{36} \frac{2 \nabla \Phi_{1,N} \cdot \nabla \Phi_{1,\dt}}{ \Phi_{1,N}^2}
	- \frac{\varepsilon_1^2}{18} \nabla \cdot \Bigl( \frac{\nabla \Phi_{1,\dt}}{ \Phi_{1,N} } - \frac{\Phi_{1,\dt} \nabla \Phi_{1,N}}{ \Phi_{1,N}^2 } \Bigr)   \nonumber
	\\
	& 
	+ \frac{\varepsilon_3^2}{36} \Bigl( \frac{ 2 | \nabla ( 1 - \Phi_{1,N} - \Phi_{2,N} ) |^2 ( \Phi_{1, \dt} + \Phi_{2,\dt} ) }{ (1 - \Phi_{1,N} - \Phi_{2,N} )^3}  \nonumber
	\\
	& \quad \quad
	- \frac{ 2 \nabla ( 1 - \Phi_{1,N} - \Phi_{2,N} ) \cdot \nabla ( \Phi_{1,\dt} + \Phi_{2,\dt} ) }{ (1 - \Phi_{1,N} - \Phi_{2,N} )^2}  \Bigr)   \nonumber
	\\
	& 
	+ \frac{\varepsilon_3^2}{18} \nabla \cdot \left( \frac{ - \nabla ( \Phi_{1,\dt} + \Phi_{2,\dt} ) }{ 1 - \Phi_{1,N} - \Phi_{2,N} }
	+  \frac{ ( \Phi_{1,\dt} + \Phi_{2,\dt} ) \nabla (1 - \Phi_{1,N} - \Phi_{2,N} ) }{ (1 - \Phi_{1,N} - \Phi_{2,N} )^2 }  \right)  \biggr)
	- \g_1^{(0)} .
	\label{consistency-3-1}
\end{align}
\end{small}
\begin{small}
\begin{align}
	&\partial_t \Phi_{2,\dt} \nonumber\\
	=&  	
	\mathcal{M}_2 \Delta \biggl(  \frac{1}{N_0} \frac{\Phi_{2,\dt}}{\Phi_{2,N}} + \frac{\Phi_{1,\dt} + \Phi_{2,\dt}}{1-\Phi_{1,N} - \Phi_{2,N}} - 2 \chi_{23} \Phi_{2,\dt} +(\chi_{12}-\chi_{13}-\chi_{23}) \Phi_{1,\dt}  \nonumber
	\\
	& 
	+ \frac{\varepsilon_1^2}{36} \frac{2 | \nabla \Phi_{2,N} |^2 \Phi_{2,\dt} }{ \Phi_{2,N}^3}
	- \frac{\varepsilon_1^2}{36} \frac{2 \nabla \Phi_{2,N} \cdot \nabla \Phi_{2,\dt}}{ \Phi_{2,N}^2}
	- \frac{\varepsilon_1^2}{18} \nabla \cdot \Bigl( \frac{\nabla \Phi_{2,\dt}}{ \Phi_{2,N} } - \frac{\Phi_{2,\dt} \nabla \Phi_{2,N}}{ \Phi_{2,N}^2 } \Bigr)   \nonumber
	\\
	& 
	+ \frac{\varepsilon_3^2}{36} \Bigl( \frac{ 2 | \nabla ( 1 - \Phi_{1,N} - \Phi_{2,N} ) |^2 ( \Phi_{1, \dt} + \Phi_{2,\dt} ) }{ (1 - \Phi_{1,N} - \Phi_{2,N} )^3}  \nonumber
	\\
	& \quad \quad
	- \frac{ 2 \nabla ( 1 - \Phi_{1,N} - \Phi_{2,N} ) \cdot \nabla ( \Phi_{1,\dt} + \Phi_{2,\dt} ) }{ (1 - \Phi_{1,N} - \Phi_{2,N} )^2}  \Bigr)   \nonumber
	\\
	&
	+ \frac{\varepsilon_3^2}{18} \nabla \cdot \left( \frac{ - \nabla ( \Phi_{1,\dt} + \Phi_{2,\dt} ) }{ 1 - \Phi_{1,N} - \Phi_{2,N} }
	+  \frac{ ( \Phi_{1,\dt} + \Phi_{2,\dt} ) \nabla (1 - \Phi_{1,N} - \Phi_{2,N} ) }{ (1 - \Phi_{1,N} - \Phi_{2,N} )^2 }  \right)  \biggr)
	- \g_2^{(0)}  .
	\label{consistency-3-2}
\end{align}
\end{small}
Initial data $\Phi_{j, \dt} ( \cdot, t=0) \equiv 0$. In fact, existence of a solution of the above linear PDE system is straightforward . It depends only on the projection solution $\Phi_{j,N}$. And also, the derivatives of $\Phi_{j,\dt}$ in various orders are bounded. In turn,  when $n \geq 1$, an application of the semi-implicit discretization to \eqref{consistency-3-1}-\eqref{consistency-3-2} gives
\begin{small}
\begin{align}
	&\frac{3\Phi_{1,\dt}^{n+1} - 4\Phi_{1,\dt}^n + \Phi_{1,\dt}^{n-1}}{2\dt} \nonumber\\
	= 	
	&\mathcal{M}_1 \Delta \biggl(  \frac{1}{M_0} \frac{\Phi_{1,\dt}^{n+1}}{\Phi_{1,N}^{n+1}} + \frac{\Phi_{1,\dt}^{n+1} + \Phi_{2,\dt}^{n+1} }{1-\Phi_{1,N}^{n+1} - \Phi_{2,N}^{n+1} } - 2 \chi_{13} (2\Phi_{1,\dt}^n - \Phi_{1,\dt}^{n-1}) \nonumber
	\\
	& \quad
	+(\chi_{12}-\chi_{13}-\chi_{23}) (2\Phi_{2,\dt}^n - \Phi_{2,\dt}^{n-1} ) + \frac{\varepsilon_1^2}{36} \frac{2 | \nabla \Phi_{1,N}^{n+1} |^2 \Phi_{1,\dt}^{n+1} }{ ( \Phi_{1,N}^{n+1} )^3}
	- \frac{\varepsilon_1^2}{36} \frac{2 \nabla \Phi_{1,N}^{n+1} \cdot \nabla \Phi_{1,\dt}^{n+1} }{ ( \Phi_{1,N}^{n+1} )^2}
 \nonumber
	\\
	& \quad
	- \frac{\varepsilon_1^2}{18} \nabla \cdot \Bigl( \frac{\nabla \Phi_{1,\dt}^{n+1} }{ \Phi_{1,N}^{n+1} } - \frac{\Phi_{1,\dt}^{n+1} \nabla \Phi_{1,N}^{n+1} }{ ( \Phi_{1,N}^{n+1} )^2 } \Bigr)
	+ A_1\dt \Delta(\Phi_{1,\dt}^{n+1}-\Phi_{1,\dt}^n)\nonumber\\
	&\quad	
	+ \frac{\varepsilon_3^2}{36} \Bigl( \frac{ 2 | \nabla ( 1 - \Phi_{1,N}^{n+1} - \Phi_{2,N}^{n+1} ) |^2 ( \Phi_{1, \dt}^{n+1} + \Phi_{2,\dt}^{n+1} ) }{ (1 - \Phi_{1,N}^{n+1} - \Phi_{2,N}^{n+1} )^3}  
	- \frac{ 2 \nabla ( 1 - \Phi_{1,N}^{n+1} - \Phi_{2,N}^{n+1} ) \cdot \nabla ( \Phi_{1,\dt}^{n+1} + \Phi_{2,\dt}^{n+1} ) }{ (1 - \Phi_{1,N}^{n+1} - \Phi_{2,N}^{n+1} )^2}  \Bigr)   \nonumber
	\\
	& \quad
	+ \frac{\varepsilon_3^2}{18} \nabla \cdot \Bigl( \frac{ - \nabla  ( \Phi_{1,\dt}^{n+1} + \Phi_{2,\dt}^{n+1} ) }{ 1 - \Phi_{1,N}^{n+1} - \Phi_{2,N}^{n+1} } 
	+  \frac{ ( \Phi_{1,\dt}^{n+1} + \Phi_{2,\dt}^{n+1} ) \nabla (1 - \Phi_{1,N}^{n+1} - \Phi_{2,N}^{n+1} ) }{ (1 - \Phi_{1,N}^{n+1} - \Phi_{2,N}^{n+1} )^2 }  \Bigr) \biggr)\nonumber\\
	&- \g_1^{(0)}  + \dt^2 \hh_1^n + O(\dt^3) ,\label{consistency-4-1}
	\\
	&\frac{3\Phi_{2,\dt}^{n+1} - 4\Phi_{2,\dt}^n + \Phi_{2,\dt}^{n-1} }{2\dt} \nonumber\\	
	=
	&\mathcal{M}_2 \Delta \biggl(  \frac{1}{N_0} \frac{\Phi_{2,\dt}^{n+1} }{\Phi_{2,N}^{n+1}} + \frac{ \Phi_{1,\dt}^{n+1} + \Phi_{2,\dt}^{n+1} }{1-\Phi_{1,N}^{n+1} - \Phi_{2,N}^{n+1} } - 2 \chi_{23} (2\Phi_{2,\dt}^n - \Phi_{2,\dt}^{n-1})  \nonumber
	\\
	& \quad
	+(\chi_{12}-\chi_{13}-\chi_{23}) (2\Phi_{1,\dt}^n - \Phi_{1,\dt}^{n-1})  + \frac{\varepsilon_1^2}{36} \frac{2 | \nabla \Phi_{2,N}^{n+1} |^2 \Phi_{2,\dt}^{n+1} }{ ( \Phi_{2,N}^{n+1} )^3}
	- \frac{\varepsilon_1^2}{36} \frac{2 \nabla \Phi_{2,N}^{n+1} \cdot \nabla \Phi_{2,\dt}^{n+1} }{ ( \Phi_{2,N}^{n+1} )^2}
   \nonumber
	\\
	& \quad
	- \frac{\varepsilon_1^2}{18} \nabla \cdot \Bigl( \frac{\nabla \Phi_{2,\dt}^{n+1} }{ \Phi_{2,N}^{n+1} } - \frac{\Phi_{2,\dt}^{n+1} \nabla \Phi_{2,N}^{n+1} }{ ( \Phi_{2,N}^{n+1} )^2 } \Bigr) + A_2 \dt \Delta(\Phi_{2,\dt}^{n+1} - \Phi_{2,\dt}^n)
	\nonumber\\
	&\quad		
	+ \frac{\varepsilon_3^2}{36} \Bigl( \frac{ 2 | \nabla ( 1 - \Phi_{1,N}^{n+1} - \Phi_{2,N}^{n+1} ) |^2 ( \Phi_{1, \dt}^{n+1} + \Phi_{2,\dt}^{n+1} ) }{ (1 - \Phi_{1,N}^{n+1} - \Phi_{2,N}^{n+1} )^3}  
	- \frac{ 2 \nabla ( 1 - \Phi_{1,N}^{n+1} - \Phi_{2,N}^{n+1} ) \cdot \nabla ( \Phi_{1,\dt}^{n+1} + \Phi_{2,\dt}^{n+1} ) }{ (1 - \Phi_{1,N}^{n+1} - \Phi_{2,N}^{n+1} )^2}  \Bigr)   \nonumber
	\\
	& \quad
	+ \frac{\varepsilon_3^2}{18} \nabla \cdot \Bigl( \frac{ - \nabla  ( \Phi_{1,\dt}^{n+1} + \Phi_{2,\dt}^{n+1} ) }{ 1 - \Phi_{1,N}^{n+1} - \Phi_{2,N}^{n+1} }  
	+  \frac{ ( \Phi_{1,\dt}^{n+1} + \Phi_{2,\dt}^{n+1} ) \nabla (1 - \Phi_{1,N}^{n+1} - \Phi_{2,N}^{n+1} ) }{ (1 - \Phi_{1,N}^{n+1} - \Phi_{2,N}^{n+1} )^2 }  \Bigr)  \biggr)\nonumber\\
	&
	- \g_2^{(0)}  + \dt^2 \hh_2^n + O(\dt^3) .  \label{consistency-4-2}
\end{align}
\end{small}
The initial data $\Phi_{j,\dt}^0 = \Phi_{j,\dt}^1 \equiv 0$.
A combination of \eqref{consistency-2-1}-\eqref{consistency-2-2} and \eqref{consistency-4-1}-\eqref{consistency-4-2} results in the third order temporal truncation error for $	 \check{\Phi}_{j,1} = \Phi_{j,N} +\dt^2 {\cal P}_N \Phi_{j,\dt},\quad j =1, 2 $, when $n\geq 1$:
\begin{align}
	&\frac{3\check{\Phi}_{1,1}^{n+1} - 4\check{\Phi}_{1,1}^n+\check{\Phi}_{1,1}^{n-1}}{2\dt} \nonumber\\
	=&  	
	\mathcal{M}_1 \Delta \biggl(  \frac{1}{M_0}\ln \frac{\alpha \check{\Phi}_{1,1}^{n+1}}{M_0} - \ln(1- \check{\Phi}_{1,1}^{n+1} - \check{\Phi}_{2,1}^{n+1}) - 2 \chi_{13} (2\check{\Phi}_{1,1}^n-\check{\Phi}_{1,1}^{n-1})  \nonumber
	\\
	&\quad
	+(\chi_{12}-\chi_{13}-\chi_{23}) (2\check{\Phi}_{2,1}^n-\check{\Phi}_{2,1}^{n-1}) - \frac{\varepsilon_1^2}{36} 
	 \frac{| \nabla \check{\Phi}_{1,1}^{n+1} |^2}{ ( \check{\Phi}_{1,1}^{n+1} )^2} 
	\nonumber
	\\
	&\quad
	- \frac{\varepsilon_1^2}{18} \nabla \cdot \Bigl( \frac{\nabla \check{\Phi}_{1,1}^{n+1}}{ \check{\Phi}_{1,1}^{n+1} } \Bigr) + \frac{\varepsilon_3^2}{36}  \frac{| \nabla ( 1 - \check{\Phi}_{1,1}^{n+1} - \check{\Phi}_{2,1}^{n+1}) |^2}{ (1 - \check{\Phi}_{1,1}^{n+1} - \check{\Phi}_{2,1}^{n+1} )^2}  
	\nonumber\\
	&\quad  
	+ \frac{\varepsilon_3^2}{18} \nabla \cdot \Bigl( \frac{\nabla (1 - \check{\Phi}_{1,1}^{n+1} - \check{\Phi}_{2,1}^{n+1}) }{1 - \check{\Phi}_{1,1}^{n+1} - \check{\Phi}_{2,1}^{n+1} } \Bigr) + A_1\Delta t \Delta(\check{\Phi}_{1,1}^{n+1} - \check{\Phi}_{1,1}^n ) \biggr) \nonumber\\
	&+ O(\Delta t^3) + O(h^m) ,
	\label{consistency-5-1}
	\\
	&\frac{3\check{\Phi}_{2,1}^{n+1} - 4\check{\Phi}_{2,1}^n+\check{\Phi}_{2,1}^{n-1}}{2\dt} \nonumber\\
	=&  	
	\mathcal{M}_2 \Delta \biggl(  \frac{1}{N_0} \ln \frac{\beta \check{\Phi}_{2,1}^{n+1}}{N_0} - \ln(1- \check{\Phi}_{1,1}^{n+1} - \check{\Phi}_{2,1}^{n+1}) - 2 \chi_{23} (2\check{\Phi}_{2,1}^n-\check{\Phi}_{2,1}^{n-1})   \nonumber
	\\
	&\quad
	+(\chi_{12}-\chi_{13}-\chi_{23}) (2\check{\Phi}_{1,1}^n-\check{\Phi}_{1,1}^{n-1}) - \frac{\varepsilon_2^2}{36}  \frac{| \nabla \check{\Phi}_{2,1}^{n+1} |^2}{ (  \check{\Phi}_{2,1}^{n+1} )^2}  
	\nonumber
	\\
	&\quad
	- \frac{\varepsilon_2^2}{18} \nabla \cdot \Bigl( \frac{\nabla \check{\Phi}_{2,1}^{n+1}}{ \check{\Phi}_{2,1}^{n+1} } \Bigr) + \frac{\varepsilon_3^2}{36}  \frac{| \nabla ( 1 - \check{\Phi}_{1,1}^{n+1} - \check{\Phi}_{2,1}^{n+1}) |^2}{  (1 - \check{\Phi}_{1,1}^{n+1} - \check{\Phi}_{2,1}^{n+1} )^2} 
	\nonumber\\
	&  \quad 
	+ \frac{\varepsilon_3^2}{18} \nabla \cdot \Bigl( \frac{\nabla (1 - \check{\Phi}_{1,1}^{n+1} - \check{\Phi}_{2,1}^{n+1}) }{ 1 - \check{\Phi}_{1,1}^{n+1} - \check{\Phi}_{2,1}^{n+1} } \Bigr) + A_2\Delta t \Delta(\check{\Phi}_{2,1}^{n+1} - \check{\Phi}_{2,1}^n ) \biggr)\nonumber\\
	& + O(\Delta t^3) + O(h^m) ,
	\label{consistency-5-2}
\end{align}
in which the initial data is $\check{\Phi}_{j,1}^0 = \Phi_{j,N}^0$, $\check{\Phi}_{j,1}^1 = \Phi_{j,N}^1$. In the derivation of \eqref{consistency-5-1}-\eqref{consistency-5-2}, the following linearized expansions have been utilized
	\begin{eqnarray*}
	&&
	\ln \check{\Phi}_{j,1} = \ln ( \Phi_{j,N} + \dt^2 \Phi_{j,\dt} )
	= \ln \Phi_{j,N} + \frac{\dt^2 \Phi_{j,\dt}}{\Phi_{j,N}} + O (\dt^4) ,   \quad j=1, 2 ,
	\label{consistency-6-1}	
	\\
	&&
	\ln ( 1- \check{\Phi}_{1,1} - \check{\Phi}_{2,1} ) = \ln ( 1 - \Phi_{1,N} - \Phi_{2,N} - \dt^2 \Phi_{1,\dt} - \dt^2 \Phi_{2,\dt} )  \nonumber
	\\
	&& \quad
	= \ln ( 1 - \Phi_{1,N} - \Phi_{2,N} ) - \dt^2 \frac{\Phi_{1,\dt} + \Phi_{2,\dt} }{ 1 - \Phi_{1,N} - \Phi_{2,N} }  + O (\dt^4) ,
	\label{consistency-6-2}
	\\
	&&
	\frac{| \nabla \check{\Phi}_{j,1} |^2}{ \check{\Phi}_{j,1}^2}
	= \frac{| \nabla ( \Phi_{j,N} + \dt^2 \Phi_{j,\dt} ) |^2}{ ( \Phi_{j,N} + \dt^2 \Phi_{j,\dt} )^2}
	\nonumber
	\\
	&&  \quad
	= \frac{| \nabla \Phi_{j,N} |^2}{ \Phi_{j,N}^2}
	- 2 \dt^2 \frac{| \nabla \Phi_{j,N} |^2 \Phi_{j,\dt}}{ \Phi_{j,N}^3}
	+ 2 \dt^2 \frac{ \nabla \Phi_{j,N} \cdot \nabla \Phi_{j,\dt} }{ \Phi_{j,N}^2}
	+ O (\dt^4) ,   \quad j=1, 2,
	\label{consistency-6-3}
	\\
	&&
	\frac{\nabla \check{\Phi}_{j,1} }{ \check{\Phi}_{j,1} }
	= \frac{\nabla ( \Phi_{j,N} + \dt^2 \Phi_{j,\dt} ) }{ \Phi_{j,N} + \dt^2 \Phi_{j,\dt} }
	= \frac{\nabla \Phi_{j,N} }{ \Phi_{j,N} }
	+ \dt^2 \frac{\nabla \Phi_{j,\dt} }{ \Phi_{j,N} }
	- \dt^2 \frac{ \Phi_{j,\dt} \nabla \Phi_{j,N} }{ \Phi_{j,N}^2 }  + O (\dt^4) ,
	\label{consistency-6-4}
	\\
	&&
	\frac{ | \nabla ( 1 - \check{\Phi}_{1,1} - \check{\Phi}_{2,1} ) |^2 }{ (1 - \check{\Phi}_{1,1} - \check{\Phi}_{2,1} )^2}
	= \frac{ | \nabla ( 1 - \Phi_{1,N} - \Phi_{2,N} - \dt^2 \Phi_{1,\dt} - \dt^2 \Phi_{2,\dt} ) |^2 }{ (1 - \Phi_{1,N} - \Phi_{2,N} - \dt^2 \Phi_{1,\dt} - \dt^2 \Phi_{2,\dt} )^2}  \nonumber
	\\
	&& \quad
	= \frac{ | \nabla ( 1 - \Phi_{1,N} - \Phi_{2,N} ) |^2 }{ (1 - \Phi_{1,N} - \Phi_{2,N}  )^2}
	- \dt^2 \frac{ 2 \nabla ( 1 - \Phi_{1,N} - \Phi_{2,N}  ) \cdot \nabla ( \Phi_{1,\dt} + \Phi_{2,\dt} )  }{ (1 - \Phi_{1,N} - \Phi_{2,N} )^2}   \nonumber
	\\
	&&  \quad \quad
	+ \frac{ 2 \dt^2 | \nabla ( 1 - \Phi_{1,N} - \Phi_{2,N} )|^2 ( \Phi_{1,\dt} +  \Phi_{2,\dt} ) }{ (1 - \Phi_{1,N}- \Phi_{2,N} )^3}  + O (\dt^4) ,
	\label{consistency-6-5}
	\\
	&&
	\frac{\nabla (1 - \check{\Phi}_{1,1} - \check{\Phi}_{2,1} ) }{ (1 - \check{\Phi}_{1,1} - \check{\Phi}_{2,1} ) }
	= \frac{\nabla ( 1 - \Phi_{1,N} - \Phi_{2,N} - \dt^2 \Phi_{1,\dt} - \dt^2 \Phi_{2,\dt} ) }{ ( 1 - \Phi_{1,N} - \Phi_{2,N} - \dt^2 \Phi_{1,\dt} - \dt^2 \Phi_{2,\dt}  ) }    \nonumber
	\\
	&&  \quad
	= \frac{\nabla ( 1 - \Phi_{1,N} - \Phi_{2,N}  ) }{ 1 - \Phi_{1,N} - \Phi_{2,N}  }
	- \dt^2 \frac{\nabla ( \Phi_{1,\dt} + \Phi_{2,\dt} ) }{1 - \Phi_{1,N} - \Phi_{2,N} }  \nonumber
	\\
	&& \quad \quad
	+  \dt^2 \frac{ ( \Phi_{1,\dt} + \Phi_{2,\dt} ) \nabla ( 1 - \Phi_{1,N} - \Phi_{2,N} ) }{ ( 1 - \Phi_{1,N} - \Phi_{2,N} )^2 }  + O (\dt^4) .
	\label{consistency-6-6}                   	
\end{eqnarray*}
For the sake of representation, the operator ${\cal P}_N$ is omitted from the above formulas. 

In terms of spatial discretization, we construct the spatial correction term $\Phi_{j,h}$ to improve the spatial accuracy order when $n\geq 1$. The following truncation error estimate for the spatial discretization is available, by using a straightforward Taylor expansion for the constructed profile $\check{\Phi}_{j,1}^{n+1}$:
\begin{align}
	&\frac{3\check{\Phi}_{1,1}^{n+1} - 4\check{\Phi}_{1,1}^n+\check{\Phi}_{1,1}^{n-1}}{2\dt} \nonumber\\
	 =& 	
	\mathcal{M}_1 \Delta_h \biggl(  \frac{1}{M_0}\ln \frac{\alpha \check{\Phi}_{1,1}^{n+1}}{M_0} - \ln(1- \check{\Phi}_{1,1}^{n+1} - \check{\Phi}_{2,1}^{n+1}) - 2 \chi_{13} (2\check{\Phi}_{1,1}^n-\check{\Phi}_{1,1}^{n-1})  \nonumber
	\\
	& \quad
	+(\chi_{12}-\chi_{13}-\chi_{23}) (2\check{\Phi}_{2,1}^n-\check{\Phi}_{2,1}^{n-1}) - \frac{\varepsilon_1^2}{36} {\cal A}_h \Bigl( \frac{| \nabla_h \check{\Phi}_{1,1}^{n+1} |^2}{ ( {\cal A}_h \check{\Phi}_{1,1}^{n+1} )^2}  \Bigr)
	\nonumber
	\\
	& \quad
	- \frac{\varepsilon_1^2}{18} \nabla_h \cdot \Bigl( \frac{\nabla_h \check{\Phi}_{1,1}^{n+1}}{{\cal A}_h \check{\Phi}_{1,1}^{n+1} } \Bigr) + \frac{\varepsilon_3^2}{36} {\cal A}_h \Bigl( \frac{| \nabla_h ( 1 - \check{\Phi}_{1,1}^{n+1} - \check{\Phi}_{2,1}^{n+1}) |^2}{ ( {\cal A}_h (1 - \check{\Phi}_{1,1}^{n+1} - \check{\Phi}_{2,1}^{n+1} ) )^2}  \Bigr) 
	\nonumber\\
	& \quad 
	+ \frac{\varepsilon_3^2}{18} \nabla_h \cdot \Bigl( \frac{\nabla_h (1 - \check{\Phi}_{1,1}^{n+1} - \check{\Phi}_{2,1}^{n+1}) }{{\cal A}_h (1 - \check{\Phi}_{1,1}^{n+1} - \check{\Phi}_{2,1}^{n+1}) } \Bigr) + A_1\Delta t \Delta_h(\check{\Phi}_{1,1}^{n+1} - \check{\Phi}_{1,1}^n ) \biggr) \nonumber\\
	& 
	+ h^2\HH_1^{(0)} + O(\Delta t^3 + h^4) ,
	\label{consistency-7-1}
	\\
	&\frac{3\check{\Phi}_{2,1}^{n+1} - 4\check{\Phi}_{2,1}^n+\check{\Phi}_{2,1}^{n-1}}{2\dt} \nonumber\\
	=&  	
	\mathcal{M}_2 \Delta_h \biggl(  \frac{1}{N_0} \ln \frac{\beta \check{\Phi}_{2,1}^{n+1}}{N_0} - \ln(1- \check{\Phi}_{1,1}^{n+1} - \check{\Phi}_{2,1}^{n+1}) - 2 \chi_{23} (2\check{\Phi}_{2,1}^n-\check{\Phi}_{2,1}^{n-1})   \nonumber
	\\
	& \quad
	+(\chi_{12}-\chi_{13}-\chi_{23}) (2\check{\Phi}_{1,1}^n-\check{\Phi}_{1,1}^{n-1}) - \frac{\varepsilon_2^2}{36} {\cal A}_h \Bigl( \frac{| \nabla_h \check{\Phi}_{2,1}^{n+1} |^2}{ ( {\cal A}_h \check{\Phi}_{2,1}^{n+1} )^2}  \Bigr)
	\nonumber
	\\
	& \quad
	- \frac{\varepsilon_2^2}{18} \nabla_h \cdot \Bigl( \frac{\nabla_h \check{\Phi}_{2,1}^{n+1}}{{\cal A}_h \check{\Phi}_{2,1}^{n+1} } \Bigr) + \frac{\varepsilon_3^2}{36} {\cal A}_h \Bigl( \frac{| \nabla_h ( 1 - \check{\Phi}_{1,1}^{n+1} - \check{\Phi}_{2,1}^{n+1}) |^2}{ ( {\cal A}_h (1 - \check{\Phi}_{1,1}^{n+1} - \check{\Phi}_{2,1}^{n+1} ) )^2}  \Bigr)
	\nonumber\\
	&  \quad 
	+ \frac{\varepsilon_3^2}{18} \nabla_h \cdot \Bigl( \frac{\nabla_h (1 - \check{\Phi}_{1,1}^{n+1} - \check{\Phi}_{2,1}^{n+1}) }{{\cal A}_h (1 - \check{\Phi}_{1,1}^{n+1} - \check{\Phi}_{2,1}^{n+1}) } \Bigr) + A_2\Delta t \Delta_h(\check{\Phi}_{2,1}^{n+1} - \check{\Phi}_{2,1}^n ) \biggr) \nonumber\\
	& 
	+ h^2\HH_2^{(0)} + O(\Delta t^3 + h^4).
	\label{consistency-7-2}
\end{align}
Similarly, the spatially discrete functions $\HH_j^{(0)}$ are smooth enough in the sense that their derivatives are bounded. Because of the symmetry in the centered finite difference approximation, there is no $O(h^3)$ truncation error term. In turn, the spatial correction function $\Phi_{j,h}$ is determined by solving the following linear PDE system:
\begin{small}
\begin{align}
	&\partial_t \Phi_{1,h} \nonumber\\
	=&  	
	\mathcal{M}_1 \Delta \biggl(  \frac{1}{M_0} \frac{\Phi_{1,h}}{\check{\Phi}_{1,1}} + \frac{\Phi_{1,h} + \Phi_{2,h} }{1-\check{\Phi}_{1,1} - \check{\Phi}_{2,1}} - 2 \chi_{13} \Phi_{1,h} +(\chi_{12}-\chi_{13}-\chi_{23}) \Phi_{2,h}  \nonumber
	\\
	& \quad
	+ \frac{\varepsilon_1^2}{36} \frac{2 | \nabla \check{\Phi}_{1,1} |^2 \Phi_{1,h} }{ \check{\Phi}_{1,1}^3}
	- \frac{\varepsilon_1^2}{36} \frac{2 \nabla \check{\Phi}_{1,1} \cdot \nabla \Phi_{1,h}}{ \check{\Phi}_{1,1}^2}
	- \frac{\varepsilon_1^2}{18} \nabla \cdot \Bigl( \frac{\nabla \Phi_{1,h}}{ \check{\Phi}_{1,1} } - \frac{\Phi_{1,h} \nabla \check{\Phi}_{1,1}}{ \check{\Phi}_{1,1}^2 } \Bigr)   \nonumber
	\\
	& \quad
	+ \frac{\varepsilon_3^2}{36} \Bigl( \frac{ 2 | \nabla ( 1 - \check{\Phi}_{1,1} - \check{\Phi}_{2,1} ) |^2 ( \Phi_{1, h} + \Phi_{2,h} ) }{ (1 - \check{\Phi}_{1,1} - \check{\Phi}_{2,1} )^3}  \nonumber
	\\
	& \quad \quad \quad
	- \frac{ 2 \nabla ( 1 - \check{\Phi}_{1,1} - \check{\Phi}_{2,1} ) \cdot \nabla ( \Phi_{1,h} + \Phi_{2,h} ) }{ (1 - \check{\Phi}_{1,1} - \check{\Phi}_{2,1} )^2}  \Bigr)   \nonumber
	\\
	& \quad
	+ \frac{\varepsilon_3^2}{18} \nabla \cdot \left( \frac{ - \nabla ( \Phi_{1,h} + \Phi_{2,h} ) }{ 1 - \check{\Phi}_{1,1} - \check{\Phi}_{2,1} }
	+  \frac{ ( \Phi_{1,h} + \Phi_{2,h} ) \nabla (1 - \check{\Phi}_{1,1} - \check{\Phi}_{2,1} ) }{ (1 - \check{\Phi}_{1,1} - \check{\Phi}_{2,1} )^2 }  \right)  \biggr)
	- \HH_1^{(0)} .
	\label{consistency-8-1}
\end{align}
\end{small}
\begin{small}
\begin{align}
	&\partial_t \Phi_{2,h} \nonumber\\
	=&  	
	\mathcal{M}_2 \Delta \biggl(  \frac{1}{N_0} \frac{\Phi_{2,h}}{\check{\Phi}_{2,1}} + \frac{\Phi_{1,h} + \Phi_{2,h}}{1-\check{\Phi}_{1,1} - \check{\Phi}_{2,1}} - 2 \chi_{23} \Phi_{2,h} +(\chi_{12}-\chi_{13}-\chi_{23}) \Phi_{1,h}  \nonumber
	\\
	& \quad
	+ \frac{\varepsilon_1^2}{36} \frac{2 | \nabla \check{\Phi}_{2,1} |^2 \Phi_{2,h} }{ \check{\Phi}_{2,1}^3}
	- \frac{\varepsilon_1^2}{36} \frac{2 \nabla \check{\Phi}_{2,1} \cdot \nabla \Phi_{2,h}}{ \check{\Phi}_{2,1}^2}
	- \frac{\varepsilon_1^2}{18} \nabla \cdot \Bigl( \frac{\nabla \Phi_{2,h}}{ \check{\Phi}_{2,1} } - \frac{\Phi_{2,h} \nabla \check{\Phi}_{2,1}}{ \check{\Phi}_{2,1}^2 } \Bigr)   \nonumber
	\\
	& \quad
	+ \frac{\varepsilon_3^2}{36} \Bigl( \frac{ 2 | \nabla ( 1 - \check{\Phi}_{1,1} - \check{\Phi}_{2,1} ) |^2 ( \Phi_{1, h} + \Phi_{2,h} ) }{ (1 - \check{\Phi}_{1,1} - \check{\Phi}_{2,1} )^3}  \nonumber
	\\
	& \quad \quad \quad
	- \frac{ 2 \nabla ( 1 - \check{\Phi}_{1,1} - \check{\Phi}_{2,1} ) \cdot \nabla ( \Phi_{1,h} + \Phi_{2,h} ) }{ (1 - \check{\Phi}_{1,1} - \check{\Phi}_{2,1} )^2}  \Bigr)   \nonumber
	\\
	& \quad
	+ \frac{\varepsilon_3^2}{18} \nabla \cdot \left( \frac{ - \nabla ( \Phi_{1,h} + \Phi_{2,h} ) }{ 1 - \check{\Phi}_{1,1} - \check{\Phi}_{2,1} }
	+  \frac{ ( \Phi_{1,h} + \Phi_{2,h} ) \nabla (1 - \check{\Phi}_{1,1} - \check{\Phi}_{2,1} ) }{ (1 - \check{\Phi}_{1,1} - \check{\Phi}_{2,1} )^2 }  \right)  \biggr)
	- \HH_2^{(0)} ,
	\label{consistency-8-2}
\end{align}
\end{small}
in which the initial data $\Phi_{j,h}(\cdot,t=0)=0$. Again, the solution depends only on the exact solution $\Phi_j$, with the divided differences of various orders stay bounded. In turn an application of a full discretization to \eqref{consistency-8-1}-\eqref{consistency-8-2} leads to 
\begin{small}
\begin{align}
	&\frac{3\Phi_{1,h}^{n+1} - 4\Phi_{1,h}^n + \Phi_{1,h}^{n-1}}{2\dt} \nonumber\\
	= 	
	&\mathcal{M}_1 \Delta_h \biggl(  \frac{1}{M_0} \frac{\Phi_{1,h}^{n+1}}{\check{\Phi}_{1,1}^{n+1}} + \frac{\Phi_{1,h}^{n+1} + \Phi_{2,h}^{n+1} }{1-\check{\Phi}_{1,1}^{n+1} - \check{\Phi}_{2,1}^{n+1} } - 2 \chi_{13} (2\Phi_{1,h}^n - \Phi_{1,h}^{n-1}) \nonumber
	\\
	& \quad
	+(\chi_{12}-\chi_{13}-\chi_{23}) (2\Phi_{2,h}^n - \Phi_{2,h}^{n-1} ) + \frac{\varepsilon_1^2}{36} \frac{2 | \nabla_h \check{\Phi}_{1,1}^{n+1} |^2 \Phi_{1,h}^{n+1} }{ ( \check{\Phi}_{1,1}^{n+1} )^3}	\nonumber
	\\
	& \quad - \frac{\varepsilon_1^2}{36} \frac{2 \nabla_h \check{\Phi}_{1,1}^{n+1} \cdot \nabla_h \Phi_{1,h}^{n+1} }{ ( \check{\Phi}_{1,1}^{n+1} )^2}
	- \frac{\varepsilon_1^2}{18} \nabla_h \cdot \Bigl( \frac{\nabla_h \Phi_{1,h}^{n+1} }{ \check{\Phi}_{1,1}^{n+1} } - \frac{\Phi_{1,h}^{n+1} \nabla_h \check{\Phi}_{1,1}^{n+1} }{ ( \check{\Phi}_{1,1}^{n+1} )^2 } \Bigr)
	\nonumber\\
	&\quad	
	+ \frac{\varepsilon_3^2}{36}  \frac{ 2 | \nabla_h ( 1 - \check{\Phi}_{1,1}^{n+1} - \check{\Phi}_{2,1}^{n+1} ) |^2 ( \Phi_{1,h}^{n+1} + \Phi_{2,h}^{n+1} ) }{ (1 - \check{\Phi}_{1,1}^{n+1} - \check{\Phi}_{2,1}^{n+1} )^3} + A_1\dt \Delta_h(\Phi_{1,h}^{n+1}-\Phi_{1,h}^n)\nonumber\\
	 &\quad
	- \frac{\varepsilon_3^2}{36}\frac{ 2 \nabla_h ( 1 - \check{\Phi}_{1,1}^{n+1} - \check{\Phi}_{2,1}^{n+1} ) \cdot \nabla_h ( \Phi_{1,h}^{n+1} + \Phi_{2,h}^{n+1} ) }{ (1 - \check{\Phi}_{1,1}^{n+1} - \check{\Phi}_{2,1}^{n+1} )^2}   \nonumber
	\\
	& \quad
	+ \frac{\varepsilon_3^2}{18} \nabla_h \cdot \Bigl( \frac{ - \nabla_h  ( \Phi_{1,h}^{n+1} + \Phi_{2,h}^{n+1} ) }{ 1 - \check{\Phi}_{1,1}^{n+1} - \check{\Phi}_{2,1}^{n+1} } 
	+  \frac{ ( \Phi_{1,h}^{n+1} + \Phi_{2,h}^{n+1} ) \nabla_h (1 - \check{\Phi}_{1,1}^{n+1} - \check{\Phi}_{2,1}^{n+1} ) }{ (1 - \check{\Phi}_{1,1}^{n+1} - \check{\Phi}_{2,1}^{n+1} )^2 }  \Bigr) \biggr)\nonumber\\
	&
	- \HH_1^{(0)}  +  O(\dt^2 +h^2) ,\label{consistency-9-1}
	\\
	&\frac{3\Phi_{2,h}^{n+1} - 4\Phi_{2,h}^n + \Phi_{2,h}^{n-1} }{2\dt}   \nonumber\\	
	=
	&\mathcal{M}_2 \Delta_h \biggl(  \frac{1}{N_0} \frac{\Phi_{2,h}^{n+1} }{\check{\Phi}_{2,1}^{n+1}} + \frac{ \Phi_{1,h}^{n+1} + \Phi_{2,h}^{n+1} }{1-\check{\Phi}_{1,1}^{n+1} - \check{\Phi}_{2,1}^{n+1} } - 2 \chi_{23} (2\Phi_{2,h}^n - \Phi_{2,h}^{n-1})  \nonumber
	\\
	& \quad
	+(\chi_{12}-\chi_{13}-\chi_{23}) (2\Phi_{1,h}^n - \Phi_{1,h}^{n-1})  + \frac{\varepsilon_1^2}{36} \frac{2 | \nabla_h \check{\Phi}_{2,1}^{n+1} |^2 \Phi_{2,h}^{n+1} }{ ( \check{\Phi}_{2,1}^{n+1} )^3}
	\nonumber
	\\
	& \quad
		- \frac{\varepsilon_1^2}{36} \frac{2 \nabla_h \check{\Phi}_{2,1}^{n+1} \cdot \nabla_h \Phi_{2,h}^{n+1} }{ ( \check{\Phi}_{2,1}^{n+1} )^2}- \frac{\varepsilon_1^2}{18} \nabla_h \cdot \Bigl( \frac{\nabla_h \Phi_{2,h}^{n+1} }{ \check{\Phi}_{2,1}^{n+1} } - \frac{\Phi_{2,h}^{n+1} \nabla_h \check{\Phi}_{2,1}^{n+1} }{ ( \check{\Phi}_{2,1}^{n+1} )^2 } \Bigr)
	\nonumber\\
	&\quad		
	+ \frac{\varepsilon_3^2}{36} \frac{ 2 | \nabla_h ( 1 - \check{\Phi}_{1,1}^{n+1} - \check{\Phi}_{2,1}^{n+1} ) |^2 ( \Phi_{1, h}^{n+1} + \Phi_{2,h}^{n+1} ) }{ (1 - \check{\Phi}_{1,1}^{n+1} - \check{\Phi}_{2,1}^{n+1} )^3}  + A_2 \dt \Delta_h(\Phi_{2,h}^{n+1} - \Phi_{2,h}^n)\nonumber\\
	&\quad 
	- \frac{\varepsilon_3^2}{36}\frac{ 2 \nabla_h ( 1 - \check{\Phi}_{1,1}^{n+1} - \check{\Phi}_{2,1}^{n+1} ) \cdot \nabla_h ( \Phi_{1,h}^{n+1} + \Phi_{2,h}^{n+1} ) }{ (1 - \check{\Phi}_{1,1}^{n+1} - \check{\Phi}_{2,1}^{n+1} )^2}   \nonumber
	\\
	& \quad
	+ \frac{\varepsilon_3^2}{18} \nabla_h \cdot \Bigl( \frac{ - \nabla_h  ( \Phi_{1,h}^{n+1} + \Phi_{2,h}^{n+1} ) }{ 1 - \check{\Phi}_{1,1}^{n+1} - \check{\Phi}_{2,1}^{n+1} }  
	+  \frac{ ( \Phi_{1,h}^{n+1} + \Phi_{2,h}^{n+1} ) \nabla_h (1 - \check{\Phi}_{1,1}^{n+1} - \check{\Phi}_{2,1}^{n+1} ) }{ (1 - \check{\Phi}_{1,1}^{n+1} - \check{\Phi}_{2,1}^{n+1} )^2 }  \Bigr)  \biggr)\nonumber\\
	&
	- \HH_2^{(0)}  + O(\dt^2+h^2) ,\label{consistency-9-2}
\end{align}
\end{small}
in which $\Phi_{j,h}^0 = \Phi_{j,h}^1 = 0$. Finally, a combination of \eqref{consistency-7-1}-\eqref{consistency-7-2} and \eqref{consistency-9-1}-\eqref{consistency-9-2} yields the higher order spatial truncation error for $\check{\Phi}_j$, as given by \eqref{consistency-0-1}-\eqref{consistency-0-2}. Of course, the linear expansions have been extensively utilized.
\end{proof}
\begin{rem}
Trivial initial data $\Phi_{j, \dt} ( \cdot, t=0) = \Phi_{j, h} ( \cdot, t=0) \equiv 0$ are given to $\Phi_{j,\dt}, \Phi_{j,h}$ as \eqref{consistency-3-1}-\eqref{consistency-3-2} and \eqref{consistency-8-1}-\eqref{consistency-8-2}. Therefore, using similar arguments as in~\eqref{mass conserv-1}-\eqref{mass conserv-2}, we conclude that
\begin{eqnarray*}
	\hspace{-0.35in} &&
	\phi_j^0 \equiv  \check{\Phi}_{j,N}^0 ,  \quad
	\overline{\phi_j^k} = \overline{\phi_j^0} ,  \quad \forall\, k \ge 0,
	\\
	\hspace{-0.35in} &&
	\overline{\check{\Phi}_{j,N}^k}
	= \overline{\phi_j^0} ,  \quad \forall \, k \ge 0,  
	\\
	\hspace{-0.35in} &&
	\overline{\tau_1^{n+1}} = \overline{\tau_2^{n+1}} = 0 ,  \quad \forall n \ge 0, \, \, \,
	j=1, 2. 
\end{eqnarray*}	
\end{rem}
\begin{rem}
	Since the correction function $\Phi_{j,\dt}, \Phi_{j,h}$ is bounded, we recall the separation property~\eqref{assumption:separation} for the exact solution, and obtain a similar property for the constructed profile $\check{\Phi}_{j,N}$:
	\begin{equation}
		\check{\Phi}_{1,N} \ge \delta_0 , \, \, \, \check{\Phi}_{2,N} \ge \delta_0 , \, \, \,
		1 - \check{\Phi}_{1,N} - \check{\Phi}_{2,N} \ge \delta_0 ,  \quad \exists \, \delta_0 > 0 ,
		\label{assumption:separation-2}
	\end{equation}
	in which the projection estimate~\eqref{projection-est-0} has been repeatedly used. Such a uniform bound will be used in the convergence analysis.
	
	In addition, since the correction function $\Phi_{j,\dt}, \Phi_{j,h}$ only depends on $\Phi_{j,N}$ and the exact solution, its $W^{1,\infty}$ norm will stay bounded. In turn, we are able to obtain a discrete $W^{1,\infty}$ bound for the  constructed profile $\check{\Phi}_{j,N}$:
	\begin{equation}
		\| \nabla_h \check{\Phi}_{j,N} \|_\infty \le C^* ,  \quad j=1, 2 .
		\label{assumption:W1-infty bound}
	\end{equation}
\end{rem}
\subsection{A rough error estimate}

Instead of a direct analysis for the error function defined in~\eqref{error function-1}, we introduce an alternate numerical error function:
\begin{equation}
	\tilde{\phi}_1^m := \mathcal{P}_h \check{\Phi}_{1,N}^m - \phi_1^m ,  \, \, \,
	\tilde{\phi}_2^m := \mathcal{P}_h \check{\Phi}_{2,N}^m - \phi_2^m , \quad \forall \ m \in \left\{ 0 ,1 , 2, 3, \cdots \right\} .
	\label{error function-2}
\end{equation}
The advantage of such a numerical error function is associated with its higher order accuracy, which comes from the higher order consistency estimate~\eqref{consistency-0-1}-\eqref{consistency-0-2}. Again, since $\overline{\tilde{\phi}_1^m} = \overline{\tilde{\phi}_2^m} = 0$. Obviously, we have
\begin{equation}
	e_j^0 = \tilde{\phi}_j^0,\quad e_j^1 = \tilde{\phi}_j^1,\quad j = 1,2.\label{error-for-initial}
\end{equation}

When $n\geq 1$, a careful consistent analysis indicates the following truncation error estimate:
\begin{align}
	\frac{3\tilde{\phi}_1^{n+1} - 4\tilde{\phi}_1^n+\tilde{\phi}_1^{n-1}}{2\dt} & = 	
	\mathcal{M}_1 \Delta_h \tilde{\mu}_1^{n+1} + \tau_1^{n+1} , \label{error equation-1}
	\\
	\frac{3\tilde{\phi}_2^{n+1} - 4\tilde{\phi}_2^n+\tilde{\phi}_2^{n-1}}{2\dt} & =	
	\mathcal{M}_2 \Delta_h \tilde{\mu}_2^{n+1} + \tau_2^{n+1} , \label{error equation-2}
\end{align}
with $\|\tau_i^{n+1}\|_{-1,h}\le C(\dt^3+h^4)$, and
\begin{small}
\begin{align}
	\tilde{\mu}_1^{n+1} & = 	
	\frac{1}{M_0} ( \ln \check{\Phi}_{1,N}^{n+1} - \ln \phi_1^{n+1} )
	- \left( \ln(1- \check{\Phi}_{1,N}^{n+1} - \check{\Phi}_{2,N}^{n+1}) - \ln(1-\phi_1^{n+1} -\phi_2^{n+1}) \right) \nonumber
	\\
	& \quad  \quad
	- 2 \chi_{13} \tilde{\hat{\phi}}_1^n +(\chi_{12}-\chi_{13}-\chi_{23}) \tilde{\hat{\phi}}_2^n
	+ \tilde{\mu}_{1,s}^{n+1} + \tilde{\mu}_{3,s}^{n+1} - A_1\dt \Delta_h(\tilde{\phi}_1^{n+1} - \tilde{\phi}_1^n) , \label{error equation-3}
	\\
	\tilde{\mu}_2^{n+1} & = 	
	\frac{1}{N_0} ( \ln \check{\Phi}_{2,N}^{n+1} - \ln \phi_2^{n+1} )
	- \left( \ln(1- \check{\Phi}_{1,N}^{n+1} - \check{\Phi}_{2,N}^{n+1}) - \ln(1-\phi_1^{n+1} -\phi_2^{n+1}) \right) \nonumber
	\\
	& \quad  \quad
	- 2 \chi_{23} \tilde{\hat{\phi}}_2^n +(\chi_{12}-\chi_{13}-\chi_{23}) \tilde{\hat{\phi}}_1^n
	+ \tilde{\mu}_{2,s}^{n+1} + \tilde{\mu}_{3,s}^{n+1} - A_2\dt \Delta_h(\tilde{\phi}_2^{n+1} - \tilde{\phi}_2^n) , \label{error equation-4}
	\\
	\tilde{\mu}_{1,s}^{n+1} & =
	\frac{\varepsilon_1^2}{36} {\cal A}_h \Bigl(  \gamma^{(1)} {\cal A}_h \tilde{\phi}_1^{n+1}
	- \frac{ \nabla_h ( \check{\Phi}_{1,N}^{n+1} + \phi_1^{n+1}) \cdot \nabla_h \tilde{\phi}_1^{n+1} }{ ( {\cal A}_h \check{\Phi}_{1,N}^{n+1} )^2}  \Bigr) \nonumber\\   
	&\quad\quad- \frac{\varepsilon_1^2}{18} \nabla_h \cdot \Bigl( \frac{\nabla_h \tilde{\phi}_1^{n+1}}{{\cal A}_h \phi_1^{n+1} }  - \frac{ \tilde{\phi}_1^{n+1} \nabla_h \check{\Phi}_{1,N}^{n+1}}{{\cal A}_h \check{\Phi}_1^{n+1} {\cal A}_h \check{\Phi}_{1,N}^{n+1} }  \Bigr) , \label{error equation-5}
	\\
	\tilde{\mu}_{2,s}^{n+1} & =
	\frac{\varepsilon_2^2}{36} {\cal A}_h \Bigl(  \gamma^{(2)} {\cal A}_h \tilde{\phi}_2^{n+1}
	- \frac{ \nabla_h ( \check{\Phi}_{2,N}^{n+1} + \phi_2^{n+1}) \cdot \nabla_h \tilde{\phi}_2^{n+1} }{ ( {\cal A}_h \Phi_{2,N}^{n+1} )^2}  \Bigr) \nonumber\\   
	&\quad\quad- \frac{\varepsilon_2^2}{18} \nabla_h \cdot \Bigl( \frac{\nabla_h \tilde{\phi}_2^{n+1}}{{\cal A}_h \phi_2^{n+1} }  - \frac{ \tilde{\phi}_2^{n+1} \nabla_h \check{\Phi}_{2,N}^{n+1}}{{\cal A}_h \check{\Phi}_2^{n+1} {\cal A}_h \check{\Phi}_{2,N}^{n+1} }  \Bigr) , \nonumber
	\\
	\tilde{\mu}_{3,s}^{n+1}  & =
	\frac{\varepsilon_3^2}{36} {\cal A}_h \Bigl( \gamma^{(3)} {\cal A}_h ( \tilde{\phi}_1^{n+1}
	+ \tilde{\phi}_2^{n+1} )  \Bigr)
	- \frac{\varepsilon_3^2}{18} \nabla_h \cdot \Bigl( \frac{\nabla_h ( \tilde{\phi}_1^{n+1} + \tilde{\phi}_2^{n+1}) }{{\cal A}_h (1 - \phi_1^{n+1} - \phi_2^{n+1}) }  \Bigr)
	\nonumber
	\\
	& \quad
	- \frac{\varepsilon_3^2}{36} {\cal A}_h \Bigl( \frac{ \nabla_h (2 - \phi_1^{n+1} - \phi_2^{n+1} - \check{\Phi}_{1,N}^{n+1} - \check{\Phi}_{2,N}^{n+1} ) \cdot \nabla_h ( \tilde{\phi}_1^{n+1} + \tilde{\phi}_2^{n+1} ) }{ ( {\cal A}_h (1 - \check{\Phi}_{1,N}^{n+1} - \check{\Phi}_{2,N}^{n+1} ) )^2 }  \Bigr)   \nonumber
	\\
	& \quad
	- \frac{\varepsilon_3^2}{18} \nabla_h \cdot \Bigl(
	\frac{( \tilde{\phi}_1^{n+1} + \tilde{\phi}_2^{n+1} ) \nabla_h ( 1 - \check{\Phi}_{1,N}^{n+1} - \check{\Phi}_{2,N}^{n+1}) }{ {\cal A}_h (1 - \phi_1^{n+1} - \phi_2^{n+1}) {\cal A}_h (1 - \check{\Phi}_{1,N}^{n+1} - \check{\Phi}_{2,N}^{n+1}) }
	\Bigr)  ,   \nonumber
\end{align}
\end{small}
\begin{align}
	\gamma^{(1)} & = \frac{ {\cal A}_h ( \phi_1^{n+1} + \check{\Phi}_{1,N}^{n+1} )  | \nabla_h \phi_1^{n+1} |^2  }{ ( {\cal A}_h \phi_1^{n+1} )^2 ( {\cal A}_h \check{\Phi}_{1,N}^{n+1} )^2} ,   \label{error equation-8}
	\\
	\gamma^{(2)} & = \frac{ {\cal A}_h ( \phi_2^{n+1} + \check{\Phi}_{2,N}^{n+1} )  | \nabla_h \phi_2^{n+1} |^2  }{ ( {\cal A}_h \phi_2^{n+1} )^2 ( {\cal A}_h \check{\Phi}_{2,N}^{n+1} )^2} ,   \nonumber
	\\
	\gamma^{(3)} & =
	\frac{ {\cal A}_h (2 - \phi_1^{n+1} - \phi_2^{n+1} - \check{\Phi}_{1,N}^{n+1} - \check{\Phi}_{2,N}^{n+1} ) | \nabla_h ( 1 - \phi_1^{n+1} - \phi_2^{n+1}) |^2 }{ ( {\cal A}_h (1 - \phi_1^{n+1} - \phi_2^{n+1} ) )^2 ( {\cal A}_h (1 - \check{\Phi}_{1,N}^{n+1} - \check{\Phi}_{2,N}^{n+1} ) )^2 }  .  \nonumber
\end{align}
To proceed with the nonlinear analysis, we make the following a-priori assumption at the previous time step:
\begin{eqnarray}
	\| \tilde{\phi}_j^k \|_2 \le \dt^\frac74 + h^\frac74 ,  \quad k=n-1,n,\quad j=1, 2 .
	\label{a priori-1}
\end{eqnarray}
Then, based on the fact that $\|f\|_{-1,h}\le C\|f\|_2$, we have
\begin{small}
\begin{eqnarray}
	\| \tilde{\phi}_j^k \|_{-1,h} \le C(\dt^\frac74 + h^\frac74) ,\quad
	\| \nabla_h\tilde{\phi}_j^k \|_2 \le
	C\frac{\| \tilde{\phi}_j^k \|_2}{h}\le  C(\dt^\frac34 + h^\frac34) ,	
	   \ k=n-1,n .
	\label{a priori-2}
\end{eqnarray}
\end{small}
Such an a-priori assumption will be recovered by the optimal rate convergence analysis at the next time step, as will be demonstrated later.
This means that
\begin{eqnarray*}
	\| \tilde{\hat{\phi}}_j^n \|_2^2 = 	\| 2\tilde{\phi}_j^n - \tilde{\phi}_j^{n-1} \|_2^2  \le 6\|\tilde{\phi}_j^n\|_2^2+3\|\tilde{\phi}_j^{n-1}\|_2^2 \le 9C(\dt^\frac72 + h^\frac72) ,  \quad j=1, 2 .
\end{eqnarray*}
Taking a discrete inner product with~\eqref{error equation-1}, \eqref{error equation-2} by $\tilde{\mu}_1^{n+1}$, $\tilde{\mu}_2^{n+1}$, respectively, leads to
\begin{small}
\begin{eqnarray}
	&&
	3\langle \tilde{\phi}_1^{n+1} , \tilde{\mu}_1^{n+1} \rangle
	+ 3\langle \tilde{\phi}_2^{n+1} , \tilde{\mu}_2^{n+1} \rangle
	+ 2\dt ( \mathcal{M}_1 \| \nabla_h \tilde{\mu}_1^{n+1} \|_2^2
	+ \mathcal{M}_2 \| \nabla_h \tilde{\mu}_2^{n+1} \|_2^2  )\nonumber
	\\
	&=&
	\langle 4\tilde{\phi}_1^n-\tilde{\phi}_1^{n-1} , \tilde{\mu}_1^{n+1} \rangle
	+ \langle 4\tilde{\phi}_2^n-\tilde{\phi}_2^{n-1} , \tilde{\mu}_2^{n+1} \rangle
	+ 2\dt  ( \langle \tau_1^{n+1} , \tilde{\mu}_1^{n+1} \rangle
	+ \langle \tau_2^{n+1} , \tilde{\mu}_2^{n+1} \rangle ) .	\label{convergence-rough-1}
\end{eqnarray}
\end{small}
For the two terms $\langle 4\tilde{\phi}_1^n-\tilde{\phi}_1^{n-1} , \tilde{\mu}_1^{n+1} \rangle$ and
$\langle 4\tilde{\phi}_2^n-\tilde{\phi}_2^{n-1} , \tilde{\mu}_2^{n+1} \rangle$ of the right hand side of \eqref{convergence-rough-1}, an application of the Cauchy inequality reveals that
\begin{eqnarray*}
	\langle \tilde{\phi}_j^n , \tilde{\mu}_j^{n+1} \rangle
	\le  \| \tilde{\phi}_j^n \|_{-1,h} \cdot \| \nabla_h \tilde{\mu}_j^{n+1} \|_2
	\le  \frac{2}{ \mathcal{M}_j \dt} \| \tilde{\phi}_j^n \|_{-1,h}^2
	+ \frac{\mathcal{M}_j}{8} \dt \| \nabla_h \tilde{\mu}_j^{n+1} \|_2^2   ,  \quad j=1, 2 ,
\end{eqnarray*}
\begin{eqnarray*}
	|\langle \tilde{\phi}_j^{n-1} , \tilde{\mu}_j^{n+1} \rangle|
	\le  \| \tilde{\phi}_j^{n-1} \|_{-1,h} \cdot \| \nabla_h \tilde{\mu}_j^{n+1} \|_2
	\le  \frac{1}{2 \mathcal{M}_j \dt} \| \tilde{\phi}_j^{n-1} \|_{-1,h}^2
	+ \frac{\mathcal{M}_j}{2} \dt \| \nabla_h \tilde{\mu}_j^{n+1} \|_2^2   ,
\end{eqnarray*}
that means
\begin{small}
\begin{eqnarray}
	\langle 4\tilde{\phi}_j^n-\tilde{\phi}_j^{n-1} , \tilde{\mu}_j^{n+1} \rangle
	\le  \left(\frac{8}{ \mathcal{M}_j \dt} \| \tilde{\phi}_j^n \|_{-1,h}^2
	+ \frac{1}{ 2\mathcal{M}_j \dt} \| \tilde{\phi}_j^{n-1} \|_{-1,h}^2\right)+\mathcal{M}_j \dt \| \nabla_h \tilde{\mu}_j^{n+1} \|_2^2 .
	\nonumber
\end{eqnarray}
\end{small}
For the local truncation error terms, the following estimate is available:
\begin{small}
\begin{eqnarray}
	\langle \tau_j^{n+1} , \tilde{\mu}_j^{n+1} \rangle
	\le \| \tau_j^{n+1} \|_{-1,h} \cdot \| \nabla_h \tilde{\mu}_j^{n+1} \|_2
	\le  \frac{1}{2 \mathcal{M}_j} \| \tau_j^{n+1} \|_{-1,h}^2 + \frac{\mathcal{M}_j}{2} \| \nabla_h \tilde{\mu}_j^{n+1} \|_2^2 .\nonumber
\end{eqnarray}
\end{small}
Going back~\eqref{convergence-rough-1}, we get
\begin{eqnarray}
	3\langle \tilde{\phi}_1^{n+1} , \tilde{\mu}_1^{n+1} \rangle
	+ 3\langle \tilde{\phi}_2^{n+1} , \tilde{\mu}_2^{n+1} \rangle
	\le&&\frac{8}{ \mathcal{M}_* \dt} ( \| \tilde{\phi}_1^n \|_{-1,h}^2
	+ \| \tilde{\phi}_2^n \|_{-1,h}^2  )\nonumber\\
	&&+\frac{1}{ 2\mathcal{M}_* \dt} ( \| \tilde{\phi}_1^{n-1} \|_{-1,h}^2
	+ \| \tilde{\phi}_2^{n-1} \|_{-1,h}^2  )\nonumber\\
	&&+\frac{\dt}{ \mathcal{M}_*} ( \| \tau_1^{n+1} \|_{-1,h}^2 + \| \tau_2^{n+1} \|_{-1,h}^2 )   ,
	\label{convergence-rough-4}
\end{eqnarray}
which $\mathcal{M}_* = \min(\mathcal{M}_1, \mathcal{M}_2)$.
On the other hand, the detailed expansions in~\eqref{error equation-3}-\eqref{error equation-4} reveal the following identity:
\begin{eqnarray}
	&&
	\langle \tilde{\phi}_1^{n+1} , \tilde{\mu}_1^{n+1} \rangle
	+ \langle \tilde{\phi}_2^{n+1} , \tilde{\mu}_2^{n+1} \rangle   \nonumber
	\\
	&=& \frac{1}{M_0} \langle ( \ln \check{\Phi}_{1,N}^{n+1} - \ln \phi_1^{n+1} )  ,
	\tilde{\phi}_1^{n+1}  \rangle
	+ \frac{1}{N_0} \langle ( \ln \check{\Phi}_{2,N}^{n+1} - \ln \phi_2^{n+1} )  ,
	\tilde{\phi}_2^{n+1}  \rangle  \nonumber
	\\
	&&
	- \langle ( \ln(1- \check{\Phi}_{1,N}^{n+1} - \check{\Phi}_{2,N}^{n+1}) - \ln(1-\phi_1^{n+1} -\phi_2^{n+1}) ) ,
	\tilde{\phi}_1^{n+1} + \tilde{\phi}_2^{n+1} \rangle \nonumber
	\\
	&&
	- 2 \chi_{13} \langle \tilde{\hat{\phi}}_1^n , \tilde{\phi}_1^{n+1} \rangle
	- 2 \chi_{23} \langle \tilde{\hat{\phi}}_2^n , \tilde{\phi}_2^{n+1} \rangle
	+ (\chi_{12}-\chi_{13}-\chi_{23})  (
	\langle \tilde{\hat{\phi}}_2^n , \tilde{\phi}_1^{n+1} \rangle
	+ \langle \tilde{\hat{\phi}}_1^n , \tilde{\phi}_2^{n+1} \rangle  )   \nonumber
	\\
	&&
	+ A_1\dt \langle\nabla_h(\tilde{\phi}_1^{n+1}-\tilde{\phi}_1^n), \nabla_h \tilde{\phi}_1^{n+1}\rangle + A_2\dt \langle\nabla_h(\tilde{\phi}_2^{n+1}-\tilde{\phi}_2^n), \nabla_h \tilde{\phi}_2^{n+1}\rangle
	\nonumber
	\\
	&& + \langle \tilde{\mu}_{1,s}^{n+1} , \tilde{\phi}_1^{n+1} \rangle
	+ \langle \tilde{\mu}_{2,s}^{n+1} , \tilde{\phi}_2^{n+1} \rangle
	+ \langle \tilde{\mu}_{3,s}^{n+1} ,
	\tilde{\phi}_1^{n+1} + \tilde{\phi}_2^{n+1} \rangle .   \label{convergence-rough-5}
\end{eqnarray}
For the first nonlinear inner product on the right hand side, we begin with the following observation:
\begin{eqnarray*}
	\ln \check{\Phi}_{1,N}^{n+1} - \ln \phi_1^{n+1}
	=  \frac{1}{\xi} \tilde{\phi}_1^{n+1} ,  \quad
	\mbox{with $0 < \xi < 1$ between $\phi_1^{n+1}$ and $\check{\Phi}_{1,N}^{n+1}$} ,
	\label{convergence-rough-6-1}
\end{eqnarray*}
which comes from an application of intermediate value theorem. Since the bound $0 < \xi < 1$ is available at a point-wise level, we conclude that
\begin{eqnarray}
	\langle ( \ln \check{\Phi}_{1,N}^{n+1} - \ln \phi_1^{n+1} )  ,
	\tilde{\phi}_1^{n+1}  \rangle  \ge \| \tilde{\phi}_1^{n+1} \|_2^2 .
	\label{convergence-rough-6-2}
\end{eqnarray}
Using similar arguments, we also obtain
\begin{eqnarray}
	\hspace{-0.35in} &&
	\langle ( \ln \check{\Phi}_{2,N}^{n+1} - \ln \phi_2^{n+1} )  ,
	\tilde{\phi}_2^{n+1}  \rangle  \ge \| \tilde{\phi}_2^{n+1} \|_2^2 ,
	\label{convergence-rough-6-3}
	\\
	\hspace{-0.35in} &&
	- \langle ( \ln(1- \check{\Phi}_{1,N}^{n+1} - \check{\Phi}_{2,N}^{n+1}) - \ln(1-\phi_1^{n+1} -\phi_2^{n+1}) ) ,
	\tilde{\phi}_1^{n+1} + \tilde{\phi}_2^{n+1} \rangle
	\ge \| \tilde{\phi}_1^{n+1} + \tilde{\phi}_2^{n+1} \|_2^2  .
	\label{convergence-rough-6-4}
\end{eqnarray}
Moreover, since the discrete surface energy functional presented in~\eqref{Discrete-energy-c} is convex, we conclude that
\begin{eqnarray*}
	\langle \tilde{\mu}_{1,s}^{n+1} , \tilde{\phi}_1^{n+1} \rangle
	+ \langle \tilde{\mu}_{2,s}^{n+1} , \tilde{\phi}_2^{n+1} \rangle
	+ \langle \tilde{\mu}_{3,s}^{n+1} ,
	\tilde{\phi}_1^{n+1} + \tilde{\phi}_2^{n+1} \rangle
	\ge 0 .  
\end{eqnarray*}
For the artificial term, we have
\begin{equation*}
	2\langle\nabla_h(\tilde{\phi}_i^{n+1}-\tilde{\phi}_i^n),\nabla_h\tilde{\phi}_i^{n+1}\rangle= \nrm{\nabla_h\tilde{\phi}_i^{n+1}}_2^2 -\nrm{\nabla_h\tilde{\phi}_i^n}_2^2 + \nrm{\nabla_h(\tilde{\phi}_i^{n+1}-\tilde{\phi}_i^n)}_2^2.
\end{equation*}
Going back~\eqref{convergence-rough-5}, we arrive at
\begin{eqnarray*}
	&&
	\langle \tilde{\phi}_1^{n+1} , \tilde{\mu}_1^{n+1} \rangle
	+ \langle \tilde{\phi}_2^{n+1} , \tilde{\mu}_2^{n+1} \rangle   \nonumber
	\\
	&\ge& \frac{1}{M_0} \| \tilde{\phi}_1^{n+1} \|_2^2
	+ \frac{1}{N_0} \| \tilde{\phi}_2^{n+1} \|_2^2
	+ \| \tilde{\phi}_1^{n+1} + \tilde{\phi}_2^{n+1} \|_2^2
	- 4 \chi_{13}^2 M_0  \| \tilde{\hat{\phi}}_1^n \|_2^2
	- \frac{1}{4 M_0} \| \tilde{\phi}_1^{n+1} \|_2^2   \nonumber
	\\
	&&
	- 4 \chi_{23}^2 N_0 \| \tilde{\hat{\phi}}_2^n \|_2^2 
	- \frac{1}{4 N_0} \| \tilde{\phi}_2^{n+1} \|_2^2   
	- \frac{1}{4 M_0} \| \tilde{\phi}_1^{n+1} \|_2^2
	- \frac{1}{4 N_0} \| \tilde{\phi}_2^{n+1} \|_2^2   \nonumber
	\\
	&&
	- (\chi_{12}-\chi_{13}-\chi_{23})^2 ( M_0  \| \tilde{\hat{\phi}}_2^n \|_2^2
	+ N_0 \| \tilde{\hat{\phi}}_1^n \|_2^2 ) \nonumber
	\\
	&& + \frac{A_1 \dt}{2}\left(\nrm{\nabla_h\tilde{\phi}_1^{n+1}}_2^2 -\nrm{\nabla_h\tilde{\phi}_1^n}_2^2\right)  + \frac{A_2 \dt}{2}\left(\nrm{\nabla_h\tilde{\phi}_2^{n+1}}_2^2 -\nrm{\nabla_h\tilde{\phi}_2^n}_2^2\right)\nonumber
	\\
	&\ge& \frac{1}{2M_0} \| \tilde{\phi}_1^{n+1} \|_2^2
	+ \frac{1}{2N_0} \| \tilde{\phi}_2^{n+1} \|_2^2
	- ( 4 \chi_{13}^2 M_0  + (\chi_{12}-\chi_{13}-\chi_{23})^2 N_0)
	\| \tilde{\hat{\phi}}_1^n \|_2^2   \nonumber
	\\
	&&
	- ( 4 \chi_{23}^2 N_0   + (\chi_{12}-\chi_{13}-\chi_{23})^2 M_0 )
	\| \tilde{\hat{\phi}}_2^n \|_2^2\nonumber
	\\
 	&& + \frac{A_1 \dt}{2}\left(\nrm{\nabla_h\tilde{\phi}_1^{n+1}}_2^2 -\nrm{\nabla_h\tilde{\phi}_1^n}_2^2\right) +  \frac{A_2 \dt}{2}\left(\nrm{\nabla_h\tilde{\phi}_2^{n+1}}_2^2 -\nrm{\nabla_h\tilde{\phi}_2^n}_2^2\right).
\end{eqnarray*}
In turn, its substitution into~\eqref{convergence-rough-4} yields
\begin{eqnarray*}
	&&
	\frac{1}{2M_0} \| \tilde{\phi}_1^{n+1} \|_2^2
	+ \frac{1}{2N_0} \| \tilde{\phi}_2^{n+1} \|_2^2  + \frac{A_1 \dt}{2}\nrm{\nabla_h\tilde{\phi}_1^{n+1}}_2^2  + \frac{A_2 \dt}{2}\nrm{\nabla_h\tilde{\phi}_2^{n+1}}_2^2   \nonumber
	\\
	&\le&
	( 4 \chi_{13}^2 M_0  + (\chi_{12}-\chi_{13}-\chi_{23})^2 N_0)
	\| \tilde{\hat{\phi}}_1^n \|_2^2
	+ ( 4 \chi_{23}^2 N_0   + (\chi_{12}-\chi_{13}-\chi_{23})^2 M_0 )
	\| \tilde{\hat{\phi}}_2^n \|_2^2  \nonumber
	\\
	&&
	+ \frac{8}{3 \mathcal{M}_* \dt} ( \| \tilde{\phi}_1^n \|_{-1,h}^2
	+ \| \tilde{\phi}_2^n \|_{-1,h}^2  )
	+ \frac{1}{6 \mathcal{M}_* \dt} ( \| \tilde{\phi}_1^{n-1} \|_{-1,h}^2
	+ \| \tilde{\phi}_2^{n-1} \|_{-1,h}^2  )\nonumber\\
	&&+ \frac{\dt}{3 \mathcal{M}_*} ( \| \tau_1^{n+1} \|_{-1,h}^2 + \| \tau_2^{n+1} \|_{-1,h}^2 )  + \frac{A_1 \dt}{2}\nrm{\nabla_h\tilde{\phi}_1^n}_2^2+ \frac{A_2 \dt}{2}\nrm{\nabla_h\tilde{\phi}_2^n}_2^2 .
\end{eqnarray*}
Furthermore, a substitution of the a-priori error bound~\eqref{a priori-1} and \eqref{a priori-2} at the previous time step results in a rough error estimate for $\tilde{\phi}_1^{n+1}$, $\tilde{\phi}_2^{n+1}$:
\begin{eqnarray}
	\| \tilde{\phi}_1^{n+1} \|_2 + \| \tilde{\phi}_2^{n+1} \|_2
	\le \hat {C} ( \dt^\frac54 + h^\frac54 ) ,   \label{convergence-rough-10}
\end{eqnarray}
under the linear refinement requirement $C_1 h \le \dt \le C_2 h$, with $\hat{C}$ dependent on $M_0$, $N_0$, $\chi_{12}$, $\chi_{13}$, $\chi_{23}$, $A_1$ and $A_2$. Subsequently, an application of 2-D inverse inequality implies that
\begin{small}
\begin{eqnarray}
	\| \tilde{\phi}_1^{n+1} \|_\infty + \| \tilde{\phi}_2^{n+1} \|_\infty
	\le \frac{C ( \| \tilde{\phi}_1^{n+1} \|_2 + \| \tilde{\phi}_2^{n+1} \|_2 ) }{h}
	\le \hat {C}_1 ( \dt^\frac14 + h^\frac14 )  ,   \quad
	\mbox{with $\hat{C}_1 = C \hat{C}$} , \label{convergence-rough-11}
\end{eqnarray}
\end{small}
under the same linear refinement requirement. Because of the accuracy order, we could take $\dt$ and $h$ sufficient small so that
\begin{eqnarray*}
	\hat {C}_1 ( \dt^\frac14 + h^\frac14 ) \le \frac{\delta_0}{4} ,   \quad
	\mbox{so that} \, \, \,
	\| \tilde{\phi}_1^{n+1} \|_\infty + \| \tilde{\phi}_2^{n+1} \|_\infty
	\le \frac{\delta_0}{4}  .  
\end{eqnarray*}
Its combination with the separation property~\eqref{assumption:separation-2} leads to a similar property for the numerical solution:
\begin{equation}
	\phi_1^{n+1} \ge \frac{\delta_0}{2} , \, \, \, \phi_2^{n+1} \ge \frac{\delta_0}{2} , \, \, \,
	1 - \phi_1^{n+1} - \phi_2^{n+1}  \ge \frac{\delta_0}{2} ,  \quad \mbox{for $\delta_0 > 0$} .
	\label{assumption:separation-3}
\end{equation}
Such a uniform $\| \cdot \|_\infty$ bound will play a very important role in the refined error estimate.
\begin{rem}
	In the rough error estimate~\eqref{convergence-rough-10}, we see that the accuracy order is lower than the one given by the a-priori-assumption~\eqref{a priori-1}.
	Therefore, such a rough estimate could not be used for a global induction analysis. Instead, the purpose of such an estimate is to establish a uniform $\| \cdot \|_\infty$ bound, via the technique of inverse inequality, so that a discrete separation property becomes available for the numerical solution. With such a property established for the numerical solution, the refined error analysis will yield much sharper estimates.
\end{rem}
\subsection{The refined error estimate}

Taking a discrete inner product with~\eqref{error equation-1}, \eqref{error equation-2} by $\frac{2\Delta t}{\mathcal{M}_1}(- \Delta_h)^{-1} \tilde{\phi}_1^{n+1}$, $\frac{2\Delta t}{\mathcal{M}_2}(-\Delta_h)^{-1} \tilde{\phi}_2^{n+1}$, respectively, leads to
\begin{eqnarray}
	&&
	\frac{1}{\mathcal{M}_1} \langle 3\tilde{\phi}_1^{n+1} - 4\tilde{\phi}_1^n + \tilde{\phi}_1^{n-1}, \tilde{\phi}_1^{n+1} \rangle_{-1,h}
	+\frac{1}{\mathcal{M}_2} \langle 3\tilde{\phi}_2^{n+1} - 4\tilde{\phi}_2^n + \tilde{\phi}_2^{n-1}, \tilde{\phi}_2^{n+1} \rangle_{-1,h}\nonumber\\
	&&+ 2\dt (  \langle \tilde{\phi}_1^{n+1} , \tilde{\mu}_1^{n+1} \rangle
	+ \langle \tilde{\phi}_2^{n+1} , \tilde{\mu}_2^{n+1} \rangle )  \nonumber
	\\
	&=&
	\frac{2\dt}{\mathcal{M}_1} \langle \tau_1^{n+1} , \tilde{\phi}_1^{n+1} \rangle_{-1,h}
	+ \frac{2\dt}{\mathcal{M}_2}\langle \tau_2^{n+1} , \tilde{\phi}_2^{n+1} \rangle_{-1,h}  ,
	\label{convergence-1}
\end{eqnarray}
with the summation by parts formula applied. The following identities are available for the temporal approximation terms using \eqref{BDF2_term_estimate}:
\begin{small}
\begin{eqnarray}
	&&\frac{1}{\mathcal{M}_j} \langle 3\tilde{\phi}_j^{n+1} - 4\tilde{\phi}_j^n + \tilde{\phi}_j^{n-1}, \tilde{\phi}_j^{n+1} \rangle_{-1,h}\nonumber\\
	=&&  \frac{1}{ \mathcal{M}_j} ( \| \mathbf{p}_j^{n+1} \|_{-1,h}^2 - \| \mathbf{p}_j^n \|_{-1,h}^2)
	+ \frac{\| \tilde{\phi}_j^{n+1} - 2\tilde{\phi}_j^n + \tilde{\phi}_j^{n-1} \|_{-1,h}^2 }{2\mathcal{M}_j},
	\label{convergence-2}
\end{eqnarray}
where $\mathbf{p}_j^{n+1} = [\tilde{\phi}_j^n,\tilde{\phi}_j^{n+1}]^T$.
\end{small}

For the local truncation error terms, similar estimates could be derived:
\begin{eqnarray}
	\frac{2\dt }{\mathcal{M}_j} \langle \tau_j^{n+1} , \tilde{\phi}_j^{n+1} \rangle_{-1,h}
	\le  \frac{\dt}{\mathcal{M}_j} ( \| \tau_j^{n+1} \|_{-1,h}^2 + \| \tilde{\phi}_j^{n+1} \|_{-1,h}^2 ) ,
	\quad j=1, 2 . \label{convergence-3}
\end{eqnarray}
For the term $ \langle \tilde{\phi}_1^{n+1} , \tilde{\mu}_1^{n+1} \rangle
+ \langle \tilde{\phi}_2^{n+1} , \tilde{\mu}_2^{n+1} \rangle$, the expansion~\eqref{convergence-rough-5}, as well as the inequalities~\eqref{convergence-rough-6-2}-\eqref{convergence-rough-6-4}, are still valid. For the inner product associated with the concave terms, a standard Cauchy inequality is applied:
\begin{small}
\begin{eqnarray}
	&&
	- 2 \chi_{13}  \langle \tilde{\hat{\phi}}_1^n , \tilde{\phi}_1^{n+1} \rangle
	\ge - 2 \chi_{13}  \| \tilde{\hat{\phi}}_1^n \|_{-1,h} \| \nabla_h \tilde{\phi}_1^{n+1}  \|_2
	\nonumber
	\\
	&\ge&
	- \frac{144 \chi_{13}^2 }{\varepsilon_0^2} \| \tilde{\hat{\phi}}_1^n \|_{-1,h}^2
	- \frac{\varepsilon_0^2 }{144} \| \nabla_h \tilde{\phi}_1^{n+1}  \|_2^2
	\nonumber
	\\
	&\ge&
	- \frac{144 \chi_{13}^2 }{\varepsilon_0^2} \left( 6\|\tilde{\phi}_1^n\|_{-1,h}^2 + 3\|\tilde{\phi}_1^{n-1}\|_{-1,h}^2\right)
	- \frac{\varepsilon_0^2 }{144} \| \nabla_h \tilde{\phi}_1^{n+1}  \|_2^2 , \label{convergence-4-1}
	\\
	&&
	- 2 \chi_{23}  \langle \tilde{\hat{\phi}}_2^n , \tilde{\phi}_2^{n+1} \rangle
\ge - 2 \chi_{23}  \| \tilde{\hat{\phi}}_2^n \|_{-1,h} \| \nabla_h \tilde{\phi}_2^{n+1}  \|_2
\nonumber
\\
&\ge&
- \frac{144 \chi_{23}^2 }{\varepsilon_0^2} \| \tilde{\hat{\phi}}_2^n \|_{-1,h}^2
- \frac{\varepsilon_0^2 }{144} \| \nabla_h \tilde{\phi}_2^{n+1}  \|_2^2
\nonumber
\\
&\ge&
- \frac{144 \chi_{23}^2 }{\varepsilon_0^2} \left( 6\|\tilde{\phi}_2^n\|_{-1,h}^2 + 3\|\tilde{\phi}_2^{n-1}\|_{-1,h}^2\right)
- \frac{\varepsilon_0^2 }{144} \| \nabla_h \tilde{\phi}_2^{n+1}  \|_2^2 , \label{convergence-4-2}
	\\
	&&
	(\chi_{12}-\chi_{13}-\chi_{23})  (
	\langle \tilde{\hat{\phi}}_2^n , \tilde{\phi}_1^{n+1} \rangle
	+  \langle \tilde{\hat{\phi}}_1^n , \tilde{\phi}_2^{n+1} \rangle  )   \nonumber
	\\
	&\ge&
	- \frac{36  (\chi_{12}-\chi_{13}-\chi_{23})^2}{\varepsilon_0^2} (  \| \tilde{\hat{\phi}}_1^n \|_{-1,h}^2
	+  \| \tilde{\hat{\phi}}_2^n \|_{-1,h}^2 )
	- \frac{\varepsilon_0^2}{144} (  \| \nabla_h \tilde{\phi}_1^{n+1}  \|_2^2
	+  \| \nabla_h \tilde{\phi}_2^{n+1}  \|_2^2 )\nonumber\\
	&\ge&
	- \frac{36  (\chi_{12}-\chi_{13}-\chi_{23})^2}{\varepsilon_0^2} \left(  6(\| \tilde{\phi}_1^n \|_{-1,h}^2 + \| \tilde{\phi}_2^n \|_{-1,h}^2)
	+  3(\| \tilde{\phi}_1^{n-1}\|_{-1,h}^2 + \| \tilde{\phi}_2^{n-1} \|_{-1,h}^2) \right)\nonumber\\
	&&- \frac{\varepsilon_0^2}{144} (  \| \nabla_h \tilde{\phi}_1^{n+1}  \|_2^2
	+  \| \nabla_h \tilde{\phi}_2^{n+1}  \|_2^2 )  .\label{convergence-4-3}
\end{eqnarray}
\end{small}	
The rest works are focused on the estimates for the error terms associated with the nonlinear surface diffusion, as given by $ \langle \tilde{\mu}_{1,s}^{n+1} , \tilde{\phi}_1^{n+1} \rangle$, $ \langle \tilde{\mu}_{2,s}^{n+1} , \tilde{\phi}_2^{n+1} \rangle$,  $\langle \tilde{\mu}_{3,s}^{n+1} , \tilde{\phi}_1^{n+1} +  \tilde{\phi}_2^{n+1} \rangle$, the last three terms in~\eqref{convergence-rough-5}. First, we look at the expansion for $ \langle \tilde{\mu}_{1,s}^{n+1} , \tilde{\phi}_1^{n+1} \rangle$, which comes from the expression~\eqref{error equation-5}:
\begin{small}
\begin{eqnarray*}
	&&
	\langle \tilde{\mu}_{1,s}^{n+1} , \tilde{\phi}_1^{n+1} \rangle = I_1 + I_2 + I_3 + I_4 , \quad  \mbox{with}  \nonumber
	\\
	&&
	I_1 :=
	\frac{\varepsilon_1^2 }{36} \langle {\cal A}_h (  \gamma^{(1)} {\cal A}_h \tilde{\phi}_1^{n+1} ), \tilde{\phi}_1^{n+1} \rangle ,   \quad
	I_2 : =
	- \frac{\varepsilon_1^2 }{36} \Bigl\langle {\cal A}_h \Bigl( \frac{ \nabla_h ( \check{\Phi}_{1,N}^{n+1} + \phi_1^{n+1}) \cdot \nabla_h \tilde{\phi}_1^{n+1} }{ ( {\cal A}_h \check{\Phi}_{1,N}^{n+1} )^2}  \Bigr) ,  \tilde{\phi}_1^{n+1} \Bigr\rangle,  \nonumber
	\\
	&&
	I_3 := - \frac{\varepsilon_1^2 }{18} \Bigl\langle \nabla_h \cdot \Bigl( \frac{\nabla_h \tilde{\phi}_1^{n+1}}{{\cal A}_h \phi_1^{n+1} } \Bigr) ,  \tilde{\phi}_1^{n+1} \Bigr\rangle , \quad
	I_4 :=  \frac{\varepsilon_1^2 }{18} \Bigl\langle \nabla_h \cdot \Bigl( \frac{ \tilde{\phi}_1^{n+1} \nabla_h \check{\Phi}_{1,N}^{n+1}}{{\cal A}_h \phi_1^{n+1} {\cal A}_h \check{\Phi}_{1,N}^{n+1} }  \Bigr) , \tilde{\phi}_1^{n+1} \Bigr\rangle  . 
\end{eqnarray*}
\end{small}
It is clear that $I_1$ stays non-negative:
\begin{eqnarray}
	I_1 = \frac{\varepsilon_1^2 }{36} \langle  \gamma^{(1)} {\cal A}_h \tilde{\phi}_1^{n+1} , {\cal A}_h \tilde{\phi}_1^{n+1} \rangle  \ge 0 ,  \label{convergence-5-1}
\end{eqnarray}
in which the summation by parts formula is applied in the first step, while the fact that $\gamma^{(1)} \ge 0$ (given by~\eqref{error equation-8}) is used in the second step. Similarly, for the third part $I_3$, an application of summation by parts formula reveals that
\begin{eqnarray}
	I_3 = \frac{\varepsilon_1^2 }{18} \Bigl[  \frac{\nabla_h \tilde{\phi}_1^{n+1}}{{\cal A}_h \phi_1^{n+1} } ,  \nabla_h \tilde{\phi}_1^{n+1} \Bigr]
	\ge \frac{\varepsilon_1^2 }{18}  \|  \nabla_h \tilde{\phi}_1^{n+1}  \|_2^2 ,
	\label{convergence-5-2}
\end{eqnarray}
in which the point-wise estimate $0 < \phi_1^{n+1} < 1$ has been used in the second step. For the fourth part $I_4$, an application of summation by parts formula gives
\begin{eqnarray}
	- I_4  &=&  \frac{\varepsilon_1^2 }{18} \Bigl[ \frac{ \tilde{\phi}_1^{n+1} \nabla_h \check{\Phi}_{1,N}^{n+1}}{{\cal A}_h \phi_1^{n+1} {\cal A}_h \check{\Phi}_{1,N}^{n+1} }  , \nabla_h \tilde{\phi}_1^{n+1} \Bigr]   \nonumber
	\\
	&\le&
	\frac{\varepsilon_1^2 }{18} \Bigl\|  \frac{ 1 }{{\cal A}_h \phi_1^{n+1} } \Bigr\|_\infty
	\cdot \Bigl\| \frac{ 1 }{{\cal A}_h \check{\Phi}_{1,N}^{n+1} }  \Bigr\|_\infty
	\cdot \| \nabla_h \check{\Phi}_{1,N}^{n+1} \|_\infty
	\cdot \| \tilde{\phi}_1^{n+1} \|_2 \cdot \|  \nabla_h \tilde{\phi}_1^{n+1} \|_2  \nonumber
	\\
	&\le&
	\frac{C^* \varepsilon_1^2 }{18} \cdot 2 (\delta_0)^{-2}
	\| \tilde{\phi}_1^{n+1} \|_2 \cdot \|  \nabla_h \tilde{\phi}_1^{n+1} \|_2  \nonumber
	\\
	&\le&
	\frac{C^* \varepsilon_1^2  (\delta_0)^{-2} }{9}
	\| \tilde{\phi}_1^{n+1} \|_{-1,h}^\frac12
	\cdot \|  \nabla_h \tilde{\phi}_1^{n+1} \|_2^\frac32    \nonumber
	\\
	&\le&
	C (C^*)^4 (\delta_0)^{-8} \varepsilon_1^2  \| \tilde{\phi}_1^{n+1} \|_{-1,h}^2
	+ \frac{\varepsilon_1^2 }{72} \|  \nabla_h \tilde{\phi}_1^{n+1} \|_2^2 .
	\label{convergence-5-3}
\end{eqnarray}
In more details, the preliminary estimate \eqref{assumption:W1-infty bound} has been applied in the third step, combined the separation properties~\eqref{assumption:separation-3}; the Sobolev interpolation formula, $\| \tilde{\phi}_1^{n+1} \|_2 \le \| \tilde{\phi}_1^{n+1} \|_{-1,h}^\frac12 \cdot \|  \nabla_h \tilde{\phi}_1^{n+1} \|_2^\frac12$,  has been used in the fourth step; the Young's inequality has been applied in the last step. For the second term $I_2$, we begin with the following summation by parts:
\begin{eqnarray}
	- I_2 =
	\frac{\varepsilon_1^2 }{36} \Bigl[ \frac{ \nabla_h ( \check{\Phi}_{1,N}^{n+1} + \phi_1^{n+1}) \cdot \nabla_h \tilde{\phi}_1^{n+1} }{ ( {\cal A}_h \check{\Phi}_{1,N}^{n+1} )^2} ,  {\cal A}_h \tilde{\phi}_1^{n+1} \Bigr]  .
	\label{convergence-5-4}
\end{eqnarray}
Meanwhile, because of the fact $ \phi_1^{n+1} = \Phi_{1,N}^{n+1} - \tilde{\phi}_1^{n+1}$, we are able to decompose $-I_2$ into two parts:
\begin{eqnarray*}
	- I_2 = -I_{2,1} - I_{2,2} , \quad \mbox{with} &&
	- I_{2,1} := \frac{\varepsilon_1^2 }{18} \Bigl[ \frac{ \nabla_h \check{\Phi}_{1,N}^{n+1}  \cdot \nabla_h \tilde{\phi}_1^{n+1} }{ ( {\cal A}_h \check{\Phi}_{1,N}^{n+1} )^2} ,  {\cal A}_h \tilde{\phi}_1^{n+1} \Bigr]  ,
	\\
	&&
	-I_{2,2} := - \frac{\varepsilon_1^2 }{36} \Bigl[ \frac{ | \nabla_h \tilde{\phi}_1^{n+1} |^2 }{ ( {\cal A}_h \check{\Phi}_{1,N}^{n+1} )^2} ,  {\cal A}_h \tilde{\phi}_1^{n+1} \Bigr]  .
\end{eqnarray*}
The bound for $-I_{2,1}$ could be obtained in a similar style as~\eqref{convergence-5-3}:
\begin{small}
\begin{eqnarray}
	&&- I_{2,1}  \nonumber\\ 
	&\le&\frac{\varepsilon_1^2 }{18} \Bigl\| \frac{ 1 }{{\cal A}_h \check{\Phi}_{1,N}^{n+1} }
	\Bigr\|_\infty^2
	\cdot \| \nabla_h \check{\Phi}_{1,N}^{n+1} \|_\infty
	\cdot \| {\cal A}_h \tilde{\phi}_1^{n+1} \|_2
	\cdot \|  \nabla_h \tilde{\phi}_1^{n+1} \|_2  \nonumber
	\\
	&\le&
	\frac{C^* \varepsilon_1^2 }{18} \cdot (\delta_0)^{-2}
	\| {\cal A}_h \tilde{\phi}_1^{n+1} \|_2 \cdot \|  \nabla_h \tilde{\phi}_1^{n+1} \|_2
	\le \frac{C^* \varepsilon_1^2 }{18} \cdot (\delta_0)^{-2}
	\| \tilde{\phi}_1^{n+1} \|_2 \cdot \|  \nabla_h \tilde{\phi}_1^{n+1} \|_2
	\nonumber
	\\
	&\le&
	\frac{C^* \varepsilon_1^2  (\delta_0)^{-2} }{18}
	\| \tilde{\phi}_1^{n+1} \|_{-1,h}^\frac12
	\cdot \|  \nabla_h \tilde{\phi}_1^{n+1} \|_2^\frac32    \nonumber
	\\
	&\le&
	C (C^*)^4 (\delta_0)^{-8} \varepsilon_1^2 \| \tilde{\phi}_1^{n+1} \|_{-1,h}^2
	+ \frac{\varepsilon_1^2 }{144} \|  \nabla_h \tilde{\phi}_1^{n+1} \|_2^2 .
	\label{convergence-5-5}
\end{eqnarray}
\end{small}
For the other part $-I_{2,2}$, we recall the $\| \cdot \|_\infty$ rough estimate~\eqref{convergence-rough-11} and the separation inequality~\eqref{assumption:W1-infty bound}, and arrive at
\begin{eqnarray}
	-I_{2,2} &\le& \frac{\varepsilon_1^2 }{36} \Bigl\| \frac{ 1 }{{\cal A}_h \check{\Phi}_{1,N}^{n+1} }
	\Bigr\|_\infty^2   \cdot \| {\cal A}_h \tilde{\phi}_1^{n+1} \|_\infty
	\cdot  \|  \nabla_h \tilde{\phi}_1^{n+1} \|_2^2   \nonumber
	\\
	&\le&
	\frac{\varepsilon_1^2 }{36} \cdot (\delta_0)^{-2}  \cdot \hat {C}_1 ( \dt^\frac14 + h^\frac14 )
	\|  \nabla_h \tilde{\phi}_1^{n+1} \|_2^2 .
	\label{convergence-5-6}
\end{eqnarray}
In turn, if $\dt$ and $h$ are sufficiently small so that
\begin{eqnarray}
	\frac{\hat{C}_1 (\delta_0)^{-2} }{36} ( \dt^\frac14 + h^\frac14 )
	\le \frac{1}{144}  ,  \label{condition-dt h-1}
\end{eqnarray}
we obtain a useful bound
\begin{eqnarray}
	-I_{2,2} \le \frac{\varepsilon_1^2 }{144} \|  \nabla_h \tilde{\phi}_1^{n+1} \|_2^2 .
	\label{convergence-5-7}
\end{eqnarray}
A substitution of~\eqref{convergence-5-5}-\eqref{convergence-5-7} into ~\eqref{convergence-5-4} leads to
\begin{eqnarray}
	- I_2   \le C (C^*)^4 (\delta_0)^{-8} \varepsilon_1^2  \| \tilde{\phi}_1^{n+1} \|_{-1,h}^2
	+ \frac{\varepsilon_1^2 }{72} \|  \nabla_h \tilde{\phi}_1^{n+1} \|_2^2 .
	\label{convergence-5-8}
\end{eqnarray}
Finally, a combination of~\eqref{convergence-5-1}-\eqref{convergence-5-3} and \eqref{convergence-5-8} results in
\begin{eqnarray}
	\langle \tilde{\mu}_{1,s}^{n+1} , \tilde{\phi}_1^{n+1} \rangle
	\ge \frac{\varepsilon_1^2 }{36} \|  \nabla_h \tilde{\phi}_1^{n+1} \|_2^2
	-   2 C (C^*)^4 (\delta_0)^{-8} \varepsilon_1^2  \| \tilde{\phi}_1^{n+1} \|_{-1,h}^2 .
	\label{convergence-6}
\end{eqnarray}

The two other nonlinear surface diffusion error terms could be analyzed in the same style. The results are stated below; the technical details are skipped for the sake of brevity.
\begin{eqnarray}
	&&\langle \tilde{\mu}_{2,s}^{n+1} , \tilde{\phi}_2^{n+1} \rangle\nonumber\\
	&\ge& \frac{\varepsilon_2^2 }{36} \|  \nabla_h \tilde{\phi}_2^{n+1} \|_2^2
	-   2 C (C^*)^4 (\delta_0)^{-8} \varepsilon_2^2  \| \tilde{\phi}_2^{n+1} \|_{-1,h}^2 ,
	\label{convergence-7}
	\\
	&&\langle \tilde{\mu}_{3,s}^{n+1} , \tilde{\phi}_1^{n+1} + \tilde{\phi}_2^{n+1} \rangle\nonumber\\
	&\ge& \frac{\varepsilon_3^2}{36} \|  \nabla_h ( \tilde{\phi}_1^{n+1}
	+ \tilde{\phi}_2^{n+1})  \|_2^2
	-   2 C (C^*)^4 (\delta_0)^{-8} \varepsilon_3^2 \| \tilde{\phi}_1^{n+1} + \tilde{\phi}_2^{n+1} \|_{-1,h}^2  \nonumber
	\\
	&\ge&
	\frac{\varepsilon_3^2}{36} \|  \nabla_h ( \tilde{\phi}_1^{n+1}
	+ \tilde{\phi}_2^{n+1})  \|_2^2  \nonumber
	\\
	&&
	-   4 C (C^*)^4 (\delta_0)^{-8} \varepsilon_3^2
	( \| \tilde{\phi}_1^{n+1}  \|_{-1,h}^2 +  \| \tilde{\phi}_2^{n+1} \|_{-1,h}^2 )  .
	\label{convergence-8}
\end{eqnarray}

A substitution of~\eqref{convergence-rough-6-2}-\eqref{convergence-rough-6-4}, \eqref{convergence-4-1}-\eqref{convergence-4-3}, \eqref{convergence-6}-\eqref{convergence-8} into~\eqref{convergence-rough-5} results in
\begin{small}
\begin{eqnarray}
	&&
	\langle \tilde{\phi}_1^{n+1} , \tilde{\mu}_1^{n+1} \rangle
	+ \langle \tilde{\phi}_2^{n+1} , \tilde{\mu}_2^{n+1} \rangle  \nonumber
	\\
	&\ge&
	\frac{\varepsilon_0^2}{72} ( \|  \nabla_h \tilde{\phi}_1^{n+1}  \|_2^2
	+ \| \nabla_h \tilde{\phi}_2^{n+1}  \|_2^2  )
	-   4 C (C^*)^4 (\delta_0)^{-8} (\varepsilon_1^2 + \varepsilon_2^2 + \varepsilon_3^2)
	( \| \tilde{\phi}_1^{n+1}  \|_{-1,h}^2 +  \| \tilde{\phi}_2^{n+1} \|_{-1,h}^2  )
	\nonumber
	\\
	&&
	- \frac{144}{\varepsilon_0^2}  ( \chi_{13}^2 + \chi_{23}^2 + \frac{1}{4}(\chi_{12}-\chi_{13}-\chi_{23})^2 )
	\left[ 6(\| \tilde{\phi}_1^n \|_{-1,h}^2 + \| \tilde{\phi}_2^n \|_{-1,h}^2)\right.\nonumber\\
	&&\left.
	\quad+ 3(\| \tilde{\phi}_1^{n-1} \|_{-1,h}^2 + \| \tilde{\phi}_2^{n-1} \|_{-1,h}^2) \right]\nonumber\\
	&& + \frac{1}{M_0}\| \tilde{\phi}_1^{n+1} \|_2^2 + \frac{1}{N_0} \|\tilde{\phi}_2^{n+1} \|_2^2 + \| \tilde{\phi}_1^{n+1} + \tilde{\phi}_2^{n+1} \|_2^2 \nonumber\\
	&& + \frac{A_1 \dt}{2}\left(\nrm{\nabla_h\tilde{\phi}_1^{n+1}}_2^2 -\nrm{\nabla_h\tilde{\phi}_1^n}_2^2\right)  + \frac{A_2 \dt}{2}\left(\nrm{\nabla_h\tilde{\phi}_2^{n+1}}_2^2 -\nrm{\nabla_h\tilde{\phi}_2^n}_2^2\right).\label{convergence-9}
\end{eqnarray}
\end{small}
A combination of~\eqref{convergence-1}-\eqref{convergence-3} and~\eqref{convergence-9} gives
\begin{small}
\begin{eqnarray}
	&&
	\left( \frac{1}{\mathcal{M}_1}\| \mathbf{p}_1^{n+1} \|_{-1,G}^2 + \frac{1}{\mathcal{M}_2}\| \mathbf{p}_2^{n+1} \|_{-1,G}^2 \right)
	- \left( \frac{1}{\mathcal{M}_1}\| \mathbf{p}_1^n \|_{-1,G}^2 + \frac{1}{\mathcal{M}_2}\| \mathbf{p}_2^n \|_{-1,G}^2 \right)\nonumber
	\\
	&&
	+ \frac{\varepsilon_0^2}{36} \dt ( \|  \nabla_h \tilde{\phi}_1^{n+1}  \|_2^2
	+ \| \nabla_h \tilde{\phi}_2^{n+1}  \|_2^2  )   + \left(A_1 \dt^2\nrm{\nabla_h\tilde{\phi}_1^{n+1}}_2^2  +  A_2 \dt^2\nrm{\nabla_h\tilde{\phi}_2^{n+1}}_2^2 \right) \nonumber
	\\
	&&
	- \left(A_1 \dt^2\nrm{\nabla_h\tilde{\phi}_1^n}_2^2 + A_2 \dt^2\nrm{\nabla_h\tilde{\phi}_2^n}_2^2\right)\nonumber\\
	&\le&
	\kappa^{(1)} \dt \left[ 6(\| \tilde{\phi}_1^n \|_{-1,h}^2
	+ \| \tilde{\phi}_2^n \|_{-1,h}^2) + 3(\| \tilde{\phi}_1^{n-1} \|_{-1,h}^2
	+ \| \tilde{\phi}_2^{n-1} \|_{-1,h}^2) \right]\nonumber\\
	&&+ \kappa^{(2)}  \dt ( \| \tilde{\phi}_1^{n+1}  \|_{-1,h}^2
	+   \| \tilde{\phi}_2^{n+1} \|_{-1,h}^2 )	+\dt \left( \frac{1}{\mathcal{M}_1}\| \tilde{\phi}_1^{n+1}  \|_{-1,h}^2
	+  \frac{1}{\mathcal{M}_2} \| \tilde{\phi}_2^{n+1} \|_{-1,h}^2  \right)\nonumber
	\\
	&&
	+ \dt \left( \frac{1}{\mathcal{M}_1} \| \tau_1^{n+1} \|_{-1,h}^2 + \frac{1}{\mathcal{M}_2} \| \tau_2^{n+1} \|_{-1,h}^2 \right) ,
	\label{convergence-10}
\end{eqnarray}
\end{small}
with
\begin{eqnarray*}
		&&
	\kappa^{(1)} =  \frac{288}{\varepsilon_0^2}
	( \chi_{13}^2 + \chi_{23}^2 +\frac{1}{4} (\chi_{12}-\chi_{13}-\chi_{23})^2 )  , \\
	&&
	\kappa^{(2)} = 8 C (C^*)^4 (\delta_0)^{-8} (\varepsilon_1^2 + \varepsilon_2^2 + \varepsilon_3^2) .\\
\end{eqnarray*}
In other words, we have
\begin{eqnarray}
	&&
	\left( \frac{1}{\mathcal{M}_1}\| \mathbf{p}_1^{n+1} \|_{-1,G}^2 + \frac{1}{\mathcal{M}_2}\| \mathbf{p}_2^{n+1} \|_{-1,G}^2 \right)
	- \left( \frac{1}{\mathcal{M}_1}\| \mathbf{p}_1^n \|_{-1,G}^2 + \frac{1}{\mathcal{M}_2}\| \mathbf{p}_2^n \|_{-1,G}^2 \right)\nonumber
	\\
	&&
	+ \frac{\varepsilon_0^2}{36} \dt ( \|  \nabla_h \tilde{\phi}_1^{n+1}  \|_2^2
	+ \| \nabla_h \tilde{\phi}_2^{n+1}  \|_2^2  )   + \left(A_1 \dt^2\nrm{\nabla_h\tilde{\phi}_1^{n+1}}_2^2  +  A_2 \dt^2\nrm{\nabla_h\tilde{\phi}_2^{n+1}}_2^2 \right) \nonumber
	\\
	&&
	- \left(A_1 \dt^2\nrm{\nabla_h\tilde{\phi}_1^n}_2^2 + A_2 \dt^2\nrm{\nabla_h\tilde{\phi}_2^n}_2^2\right)\nonumber\\
	&\le&
	\kappa^{(1)} \dt \left[ 6(\| \tilde{\phi}_1^n \|_{-1,h}^2
	+ \| \tilde{\phi}_2^n \|_{-1,h}^2) + 3(\| \tilde{\phi}_1^{n-1} \|_{-1,h}^2
	+ \| \tilde{\phi}_2^{n-1} \|_{-1,h}^2) \right]\nonumber\\
	&&+ \left(\kappa^{(2)}+\frac{\mathcal{M}_1 + \mathcal{M}_2}{\mathcal{M}_1 \mathcal{M}_2}\right)  \dt ( \| \tilde{\phi}_1^{n+1}  \|_{-1,h}^2
	+   \| \tilde{\phi}_2^{n+1} \|_{-1,h}^2 )\nonumber\\
	&&+ \frac{\mathcal{M}_1 + \mathcal{M}_2}{\mathcal{M}_1 \mathcal{M}_2}\dt\left( \| \tau_1^{n+1} \|_{-1,h}^2 + \| \tau_2^{n+1} \|_{-1,h}^2 \right).\label{convergence-10-b}
\end{eqnarray}
Now we observe that $\nrm{ \mathbf{p}_i^1 }_{-1,G}^2 = \frac{5}{2 }\nrm{ \tilde{\phi}_i^1 }_{-1,h}^2,~\nrm{ \tilde{\phi}_i^0 }_{-1,h}^2=0.$\\
Summing both sides of \eqref{convergence-10-b} with respect to $n$ gives
\begin{small}
     \begin{eqnarray}
	&&	\left( \frac{1}{\mathcal{M}_1}\| \mathbf{p}_1^{n+1} \|_{-1,G}^2 + \frac{1}{\mathcal{M}_2}\| \mathbf{p}_2^{n+1} \|_{-1,G}^2 \right)  	+ \frac{\varepsilon_0^2}{36} \dt \sum_{k=1}^n ( \|  \nabla_h \tilde{\phi}_1^{k+1}  \|_2^2
	+ \| \nabla_h \tilde{\phi}_2^{k+1}  \|_2^2  ) \nonumber\\
	&&+ \left(A_1 \dt^2\nrm{\nabla_h\tilde{\phi}_1^{n+1}}_2^2  +  A_2 \dt^2\nrm{\nabla_h\tilde{\phi}_2^{n+1}}_2^2 \right)\nonumber\\
	&\leq& 
	 \left(\kappa^{(2)}+\frac{\mathcal{M}_1 + \mathcal{M}_2}{\mathcal{M}_1 \mathcal{M}_2}\right)  \dt ( \| \tilde{\phi}_1^{n+1}  \|_{-1,h}^2
	+   \| \tilde{\phi}_2^{n+1} \|_{-1,h}^2 ) \nonumber\\
	&&+\frac{5}{2}\left( \frac{1}{\mathcal{M}_1}\| \tilde{\phi}_1^1 \|_{-1,h}^2 + \frac{1}{\mathcal{M}_2}\| \tilde{\phi}_2^1 \|_{-1,h}^2 \right) \nonumber\\
	&&+\left(9\kappa^{(1)} + \kappa^{(2)}+\frac{\mathcal{M}_1 + \mathcal{M}_2}{\mathcal{M}_1 \mathcal{M}_2}\right) \dt \sum_{k=1}^n (\| \tilde{\phi}_1^k \|_{-1,h}^2
	+ \| \tilde{\phi}_2^k \|_{-1,h}^2) \nonumber\\
	&&+ \frac{\mathcal{M}_1 + \mathcal{M}_2}{\mathcal{M}_1 \mathcal{M}_2}\dt \sum_{k=1}^n\left( \| \tau_1^{k+1} \|_{-1,h}^2 + \| \tau_2^{k+1} \|_{-1,h}^2 \right)\nonumber\\
	&&+\left(A_1 \dt^2\nrm{\nabla_h\tilde{\phi}_1^1}_2^2  +  A_2 \dt^2\nrm{\nabla_h\tilde{\phi}_2^1}_2^2 \right).
	\nonumber
\end{eqnarray}
\end{small}
We observe $\nrm{ \mathbf{p}_i^{n+1} }_{-1,G}^2 \geq \frac{1}{2 }\nrm{ \tilde{\phi}_i^{n+1} }_{-1,h}^2$. This means that
    \begin{eqnarray}
	&&	\left( \frac{1}{2\mathcal{M}_1} -	\left(\kappa^{(2)}+\frac{\mathcal{M}_1 + \mathcal{M}_2}{\mathcal{M}_1 \mathcal{M}_2}\right)  \dt \right) \| \tilde{\phi}_1^{n+1} \|_{-1,h}^2\nonumber\\
	&&+\left( \frac{1}{2\mathcal{M}_2} - \left(\kappa^{(2)}+\frac{\mathcal{M}_1 + \mathcal{M}_2}{\mathcal{M}_1 \mathcal{M}_2}\right)  \dt \right) \| \tilde{\phi}_2^{n+1} \|_{-1,h}^2\nonumber\\
	&& + \frac{\varepsilon_0^2}{36} \dt \sum_{k=1}^n ( \|  \nabla_h \tilde{\phi}_1^{k+1}  \|_2^2
	+ \| \nabla_h \tilde{\phi}_2^{k+1}  \|_2^2  ) \nonumber\\
	&\leq& 
	\left(9\kappa^{(1)} + \kappa^{(2)}+\frac{\mathcal{M}_1 + \mathcal{M}_2}{\mathcal{M}_1 \mathcal{M}_2}\right) \dt \sum_{k=1}^n (\| \tilde{\phi}_1^k \|_{-1,h}^2
	+ \| \tilde{\phi}_2^k \|_{-1,h}^2) \nonumber\\
	&&+\frac{5}{2}\left( \frac{1}{\mathcal{M}_1}\| \tilde{\phi}_1^1 \|_{-1,h}^2 + \frac{1}{\mathcal{M}_2}\| \tilde{\phi}_2^1 \|_{-1,h}^2 \right)+ \frac{\mathcal{M}_1 + \mathcal{M}_2}{\mathcal{M}_1 \mathcal{M}_2}\dt \sum_{k=1}^n\left( \| \tau_1^{k+1} \|_{-1,h}^2 + \| \tau_2^{k+1} \|_{-1,h}^2 \right)\nonumber\\
    &&+\left(A_1 \dt^2\nrm{\nabla_h\tilde{\phi}_1^1}_2^2  +  A_2 \dt^2\nrm{\nabla_h\tilde{\phi}_2^1}_2^2 \right).
	\label{convergence-11}
\end{eqnarray}
We need to analyze the error at the first time step separately, since the local truncation error is only second-order in time. Notice that $e_j^0 = 0$, by carefully calculation, we have
the following error equation for the initial level:
\begin{eqnarray}
	&&e_1^1 = \dt \mathcal{M}_1 \Delta_h \tilde{\mu}_1^1 + \dt \tau_1^1,\label{convergence-12-1}\\
	&&\tilde{\mu}_1^1 = \frac{1}{2}\left(\delta_{\phi_1}G_{h,c}(\Phi_{1,N}^1,\Phi_{2,N}^1)-
	\delta_{\phi_1}G_{h,c}(\phi_1^1,\phi_2^1)\right)\label{convergence-12-2}\\
	&&e_2^1 = \dt \mathcal{M}_2 \Delta_h \tilde{\mu}_2^1 + \dt \tau_2^1,\label{convergence-12-3}\\
    &&\tilde{\mu}_2^1 = \frac{1}{2}\left(\delta_{\phi_2}G_{h,c}(\Phi_{1,N}^1,\Phi_{2,N}^1)-
\delta_{\phi_2}G_{h,c}(\phi_1^1,\phi_2^1)\right)	\label{convergence-12-4},
\end{eqnarray}
where $\tau_j^1 \le C(\dt^2+h^2)$. Since $e_j^0 = 0$, we can omit the terms about the initial step $t=t_0$ in expressions above based on the intermediate value theorem. Taking the inner product with the error equation \eqref{convergence-12-1}, \eqref{convergence-12-3} by $(-\Delta_h)^{-1}e_1^1$, $(-\Delta_h)^{-1}e_2^1$, respectively, and using summation-by-parts, we have
\begin{eqnarray}
	&&\frac{1}{\mathcal{M}_1}\nrm{e_1^1}_{-1,h}^2+\frac{1}{\mathcal{M}_2}\nrm{e_2^1}_{-1,h}^2 + \dt\left(\langle e_1^1,\tilde{\mu}_1^1 \rangle + \langle e_2^1,\tilde{\mu}_2^1 \rangle  \right) \nonumber\\
	&=&  \frac{\dt}{\mathcal{M}_1}\langle \tau_1^1 , e_1^1 \rangle_{-1,h}
	+ \frac{\dt}{\mathcal{M}_2}\langle \tau_2^1, e_2^1 \rangle_{-1,h} .\label{consistence-13}
\end{eqnarray}
For the right side of \eqref{consistence-13}, using the Cauchy-Schwartz inequality, we have
\begin{eqnarray}
 &&\frac{\dt}{\mathcal{M}_1}\langle \tau_1^1 , e_1^1 \rangle_{-1,h}
	+ \frac{\dt}{\mathcal{M}_2}\langle \tau_2^1, e_2^1 \rangle_{-1,h} \nonumber\\
	&\le&
	\frac{\dt^2}{2\mathcal{M}_1}\nrm{\tau_1^1}_{-1,h}^2+\frac{\dt^2}{2\mathcal{M}_2}\nrm{\tau_2^1}_{-1,h}^2 + \frac{1}{2\mathcal{M}_1}\nrm{e_1^1}_{-1,h}^2+\frac{1}{2\mathcal{M}_2}\nrm{e_2^1}_{-1,h}^2.\label{consistence-14}
\end{eqnarray}
For the third term of the left hand side of \eqref{consistence-13}, similar to $	\langle \tilde{\phi}_1^{n+1} , \tilde{\mu}_1^{n+1} \rangle
+ \langle \tilde{\phi}_2^{n+1} , \tilde{\mu}_2^{n+1} \rangle $, we have
\begin{small}
	\begin{eqnarray}
		&&
		\langle e_1^1 , \tilde{\mu}_1^1 \rangle
		+ \langle e_2^1 , \tilde{\mu}_2^1 \rangle  \nonumber
		\\
		&\ge&
		\frac{\varepsilon_0^2}{144} ( \|  \nabla_h e_1^1  \|_2^2
		+ \| \nabla_h e_2^1  \|_2^2  )
		-   2 C (C^*)^4 (\delta_0)^{-8} (\varepsilon_1^2 + \varepsilon_2^2 + \varepsilon_3^2)
		( \| e_1^1  \|_{-1,h}^2 +  \| e_2^1 \|_{-1,h}^2  ).  \label{convergence-15}
	\end{eqnarray}
\end{small}
Combining \eqref{consistence-13}-\eqref{convergence-15} and \eqref{error-for-initial}, when the initial time step $\dt<\frac{(\delta_0)^{8}}{4\mathcal{M}_0C (C^*)^4  (\varepsilon_1^2 + \varepsilon_2^2 + \varepsilon_3^2)}$, we have the following estimate
\begin{equation}
	\nrm{ \tilde{\phi}_j^1}_{-1,h}^2+ \frac{\varepsilon_0^2\dt}{36}\nrm{\nabla_h \tilde{\phi}_j^1}_2^2 \le  \frac{C\dt^2}{2}(\nrm{\tau_1^1}_{-1,h}^2+\nrm{\tau_2^1}_{-1,h}^2) \le \hat{C}_2(\dt^3+h^3)^2,\label{consistence-16} 
\end{equation}
in which we have used the linear refinement $C_1 h \le \dt \le C_2 h$ in the second step.

Combining \eqref{consistence-16}, taking $ 	\left(\kappa^{(2)}+\frac{\mathcal{M}_1 + \mathcal{M}_2}{\mathcal{M}_1 \mathcal{M}_2}\right)  \dt < \frac{1}{2\mathcal{M}_1}$, and $ 	\left(\kappa^{(2)}+\frac{\mathcal{M}_1 + \mathcal{M}_2}{\mathcal{M}_1 \mathcal{M}_2}\right)  \dt < \frac{1}{2\mathcal{M}_2}$ in \eqref{convergence-11}, we get the following estimate by using the discrete Gronwall inequality
\begin{equation}
	\nrm{ \tilde{\phi}_j^{n+1} }_{-1,h} +\left(\frac{\varepsilon_0^2\dt}{36}\sum_{k=1}^n\nrm{ \nabla_h \tilde{\phi}_j^{k+1} }_2^2\right)^{\nicefrac{1}{2}} \le \hat{C}_3 ( \dt^3 + h^3 ) .
	\label{CH_LOG-convergence-11}
\end{equation}
based on the truncation error accuracy $\| \tau_1^{n+1} \|_{-1,h}$, $\| \tau_2^{n+1} \|_{-1,h} \le C (\dt^3 + h^4)$. This completes the refined error estimate.

\noindent
{\bf Recovery of the a-priori assumption~\eqref{a priori-1}}

With the error estimate~\eqref{CH_LOG-convergence-11} at hand, we notice that the a-priori assumption in~\eqref{a priori-1} is satisfied at the next time step $t^{n+1}$: we observe that the $L_{\dt}^2 (0, T; H_h^1)$ error estimate in~\eqref{convergence-11} implies that
\begin{eqnarray*}
	\| \nabla_h \tilde{\phi}_j^{n+1} \|_2
	\le \frac{C \hat{C}_3 ( \dt^3 + h^3 )}{\dt^\frac12}
	\le C \hat{C}_3 ( \dt^\frac52 + h^\frac52 )  , 
\end{eqnarray*}
in which we have used the linear refinement $C_1 h \le \dt \le C_2 h$ in the second step. Moreover, since $\overline{\tilde{\phi}_1^{n+1}} = \overline{\tilde{\phi}_2^{n+1}} =0$, an application of discrete Poincar\'e inequality implies that
\begin{equation}
	\| \tilde{\phi}_j^{n+1} \|_2 \le C \| \nabla_h \tilde{\phi}_j^{n+1} \|_2
	\le C^2 \hat{C}_3 ( \dt^\frac52 + h^\frac52 )
	\le \dt^\frac74 + h^\frac74  ,  \quad j=1, 2 , 
	\label{CH_LOG-convergence-12}
\end{equation}
provided that $\dt$ and $h$ are sufficiently small. This completes the proof of Theorem~\ref{thm:convergence}. 

\begin{rem} 
The positivity-preserving and energy stability analyses, as stated in Theorems~\ref{MMC-positivity} and \ref{MMC-energy stab}, are unconditional, and there is no requirement for the time step size in terms of the spatial mesh size. Meanwhile, in the statement of the convergence analysis and error estimate in Theorem~\ref{thm:convergence}, a linear refinement condition is required for the time step size, namely $C_1 h \le \dt \le C_2 h$, for certain technical reasons. In fact, this requirement is not the standard CFL condition, although it takes a similar form. In more details, such a linear refinement condition does not come from the stability requirement of the numerical scheme; instead, this condition comes from repeated applications of inverse inequality, as revealed in the rough error estimates~\eqref{a priori-2}, \eqref{convergence-rough-10}, etc. A careful calculation implies that, a combination of $\dt \le C_2 h$ and $\dt \ge C_1 h$ enables us to derive the desired $\| \cdot \|_\infty$ rough error estimate~\eqref{convergence-rough-11}, so that the phase separation property~\eqref{assumption:separation-3} becomes available for the numerical solution at the next time step, which will play an essential role in the refined error estimate.

Meanwhile, such a linear refinement condition (for the time step size) could be improved with the help of an even higher order consistency analysis via asymptotic expansion. In subsection~\ref{subsec: consistency}, we have performed an $O (\dt^3 + h^4)$ consistency estimate, and this consistency order is able to ensure the desired $\| \cdot \|_\infty$ rough error estimate~\eqref{convergence-rough-11}, under the linear refinement condition. Instead, if we perform an $O (\dt^4 + h^4)$ consistency estimate, with the help of higher order asymptotic expansion, the desired $\| \cdot \|_\infty$ rough error estimate~\eqref{convergence-rough-11} could be derived with an improved time step constraint: $C_1 h^\frac32 \le \dt \le C_2 h^\frac23$, and the a-priori assumption~\eqref{a priori-1} could be rewritten as $\| \tilde{\phi}_j^k \|_2 \le \dt^3 + h^3$, $k=n-1, n$, $j=1, 2$. Of course, this constraint is much milder than the linear refinement requirement, and the time step size could be taken in the scale from $O (h^\frac32)$ to $O (h^\frac23)$.  

In fact, under the assumption that the exact solution is sufficiently smooth, with higher and higher order consistency estimate via asymptotic expansion, such a time step constraint could be even improved to $C_1 h^{\beta_0} \le \dt \le C_2 h^{\alpha_0}$, for any $\beta_0 > 1$ and $0 < \alpha_0 <1$. With a smaller value of scaling power index $\alpha_0$ and a larger value of $\beta_0$, there is more freedom in the choice of the time step size $\dt$. The technical details are skipped for the sake of brevity. In fact, the corresponding constraint for the time step size is only a technical issue in the theoretical justification of the convergence analysis.   
\end{rem} 

\begin{rem}
The convergence estimate~\eqref{convergence-0} (stated in Theorem~\ref{thm:convergence}) gives a second order convergence rate for the phase variables, in the $\ell^\infty (0, T; H_h^{-1})$ norm. Meanwhile, based on the higher order consistency estimate via the asymptotic expansion approach, we are able to derive the second order $\ell^\infty (0, T; \ell^2)$ convergence estimate. In particular, the higher order refined error estimate~\eqref{CH_LOG-convergence-11} leads to an $\ell^2$ error estimate \eqref{CH_LOG-convergence-12}, with convergence order $O (\dt^\frac52 + h^\frac52)$. On the other hand, by the asymptotic expansion~\eqref{consistency-1} for the constructed profile $\breve{\Phi}_j$, combined with the definition~\eqref{error function-2} for the higher order error functions, we immediately conclude that 
\begin{equation} 
\begin{aligned} 
  \| e^{n+1}_j \|_2 = & \| \tilde{\phi}_j^{n+1} - {\cal P}_N (\dt^2 \Phi_{j,\dt} + h^2 \Phi_{j,h}) \|_2 
\\
  \le & \| \tilde{\phi}_j^{n+1} \|_2 + \dt^2 \| {\cal P}_N \Phi_{j,\dt} \|_2 + h^2 \| {\cal P}_N \Phi_{j,h} \|_2 
\\
  \le & 
  C^2 \hat{C}_2 (\dt^\frac52 + h^\frac52 ) + C ( \dt^2 + h^2) 
  \le C (\dt^2 + h^2 ) . 
\end{aligned} 
  \label{CH_LOG-convergence-13} 
\end{equation} 
As a result, a discrete $L^2$ error estimate has been theoretically established, with the second order accuracy in both time and space.

Of course, such a second order $L^2$ convergence estimate is under the linear refinement constraint condition, $C_1 h \le \dt \le C_2 h$. Under a milder constraint, $C_1 h^{\beta_0} \le \dt \le C_2 h^{\alpha_0}$, with $\beta_0 > 1$, $0 < \alpha_0 < 1$, a similar second order $L^2$ error estimate could be derived in a similar manner; the technical details are skipped for simplicity of presentation. 
\end{rem} 
\section{Numerical results}    \label{sec:numerical results}

In this section, we present several numerical experiments using the proposed scheme. The nonlinear Full Approximation Scheme (FAS) multigrid method is used for solving the semi-implicit numerical scheme \eqref{Full-discrete-1} -- \eqref{Full-discrete-mu2}. The details are similar to earlier works~\cite{baskaran13a, chen19b, Dong2018Convergence, Dong2020b, feng2018bsam, hu09, wise10}, etc. We take the domain $\Omega = [0,64]^2$, fixed space resolution $N = 256$ and choose the parameters in the model as $M_0 = 0.16, N_0 = 5.12,  \chi_{12} =4, \chi_{13} = 10, \chi_{23} = 1.6$ and $ \mathcal{M}_1 = \mathcal{M}_2 = 1.0$. In addition, we set the artificial parameters as $A_1=1.25\chi_{13}^2+0.25(\chi_{12}-\chi_{13}-\chi_{23})^2$ and $A_2=2\chi_{23}^2+2(\chi_{12}-\chi_{13}-\chi_{23})^2$.
\begin{example}\label{example 1}
	The initial data is set as
	\begin{eqnarray}
		&\phi_1^0(x,y) = 0.1+0.01\cos\big(3\pi x/32\big) \cos\big(3\pi y/32\big),\nonumber\\
		&\phi_2^0(x,y) = 0.5+0.01\cos\big(3\pi x/32\big) \cos\big(3\pi y/32\big).\label{eqn:init1}
	\end{eqnarray}
\end{example}
This example is designed to study the numerical accuracy in time. We use a linear refinement path, \emph{i.e.}, $\Delta t=Ch$. At the final time $T=0.4$, we expect the global error to be $\mathcal{O}(\Delta t^2)+\mathcal{O}(h^2)=\mathcal{O}(h^2)$ under either the  $\ell^2$ or $\ell^\infty$ norm, as $h, \Delta t\to 0$.  Since we do not have an exact solution, instead of calculating the error at the final time, we compute the Cauchy difference, which is defined as $\delta_\phi: =\phi_{h_f}-\mathcal{I}_c^f(\phi_{h_c})$, where $\mathcal{I}_c^f$ is a bilinear interpolation operator (We applied Nearest Neighbor Interpolation in Matlab, which is similar to the 2D case in \cite{feng2016preconditioned}). This requires having a relatively coarse solution, parametrized by $h_c$, and a relatively fine solution, parametrized by $h_f$, where $h_c = 2 h_f$, at the same final time. The $\ell^2$ and $\ell^\infty$ errors for $\phi_1$~and $\phi_2$ are displayed in Table \ref{tab:cov1}, respectively. The  results confirm our expectation for the convergence order. 
\begin{table}[!htp]
	\begin{center}
		\begin{tabular}{|c|c|c|c|}
			\hline Grid sizes&$16^2- 32^2$&$32^2- 64^2$& $64^2- 128^2$  \\
			\hline $\ell^2$-error-$\phi_1$& $2.7223 \times 10^{-2}$&$ 7.0546 \times 10^{-3}$ &$ 1.8240 \times
			10^{-3}$\\
			\hline Rate&-& 1.95  &   1.95 \\
			\hline $\ell^2$-error-$\phi_2$& $2.6907 \times 10^{-2}$&$ 6.8618 \times 10^{-3}$ &$ 1.7182 \times
			10^{-3}$\\
			\hline Rate&-& 1.97  &   2.00
			\\
			\hline $\ell^\infty$-error-$\phi_1$& $8.2277 \times 10^{-4}$&$ 2.1980 \times 10^{-4}$ &$ 5.7373 \times
			10^{-5}$\\
			\hline Rate&-& 1.90  &    1.94  \\
			\hline $\ell^\infty$-error-$\phi_2$& $8.1155 \times 10^{-4}$&$ 2.1248 \times 10^{-4}$ &$ 5.4786 \times
			10^{-5}$\\
			\hline Rate&-& 1.93  &    1.96  \\
			\hline
		\end{tabular}
	\caption{Errors and convergence rates. The $\ell^2$ error, $\ell^{\infty}$  error and convergence rate for $\phi_1$ and $\phi_2$ when $T=0.4$. The initial data are defined in \eqref{eqn:init1}. The refinement path is $\Delta t=0.002h$.} \label{tab:cov1}
	\end{center}
\end{table}
\begin{example}\label{example 2}
	A random initial perturbation is included in the initial data:
	\begin{eqnarray}
		\phi_1^0(x,y) = 0.1+r_{i,j},\nonumber\\
		\phi_2^0(x,y) = 0.4+r_{i,j},\label{eqn:init2}
	\end{eqnarray}
	where the $r_{i,j}$ are uniformly distributed random numbers in [-0.01, 0.01].
\end{example}

This example is designed to test the performance of the proposed scheme in preserving physical properties at discrete level.
\begin{figure}[!htp]
	\begin{center}	
		\includegraphics[width=2.5in]{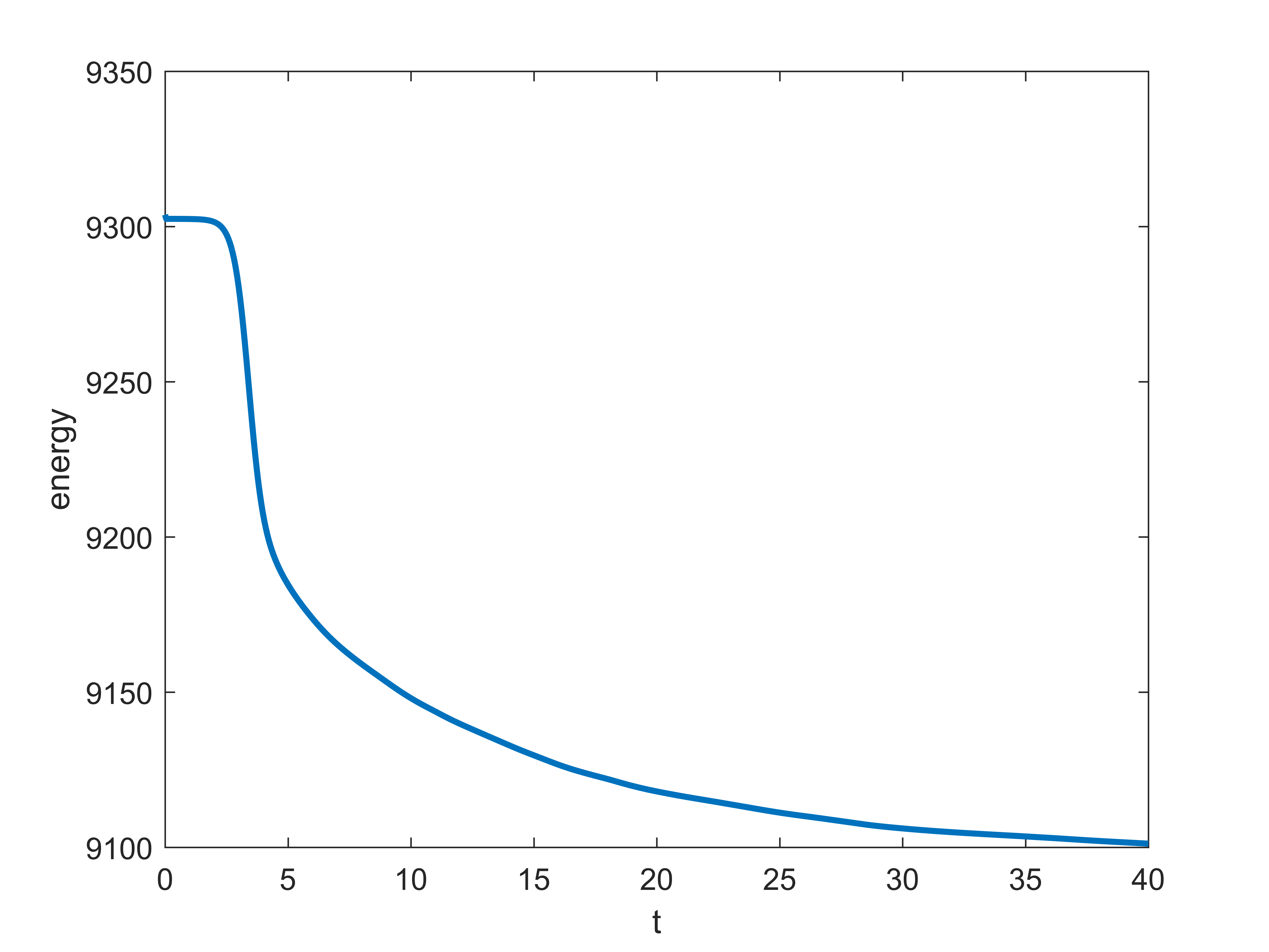}
		\caption{Example \ref{example 2}: Evolution of the energy over time. The time step size is taken as $\dt = 1.0 \times 10^{-4}$. }\label{fig:coslong_energy}
	\end{center}
\end{figure}
\begin{figure}[ht]
	\begin{center}
		\begin{subfigure}{}
			\includegraphics[width=2.2in]{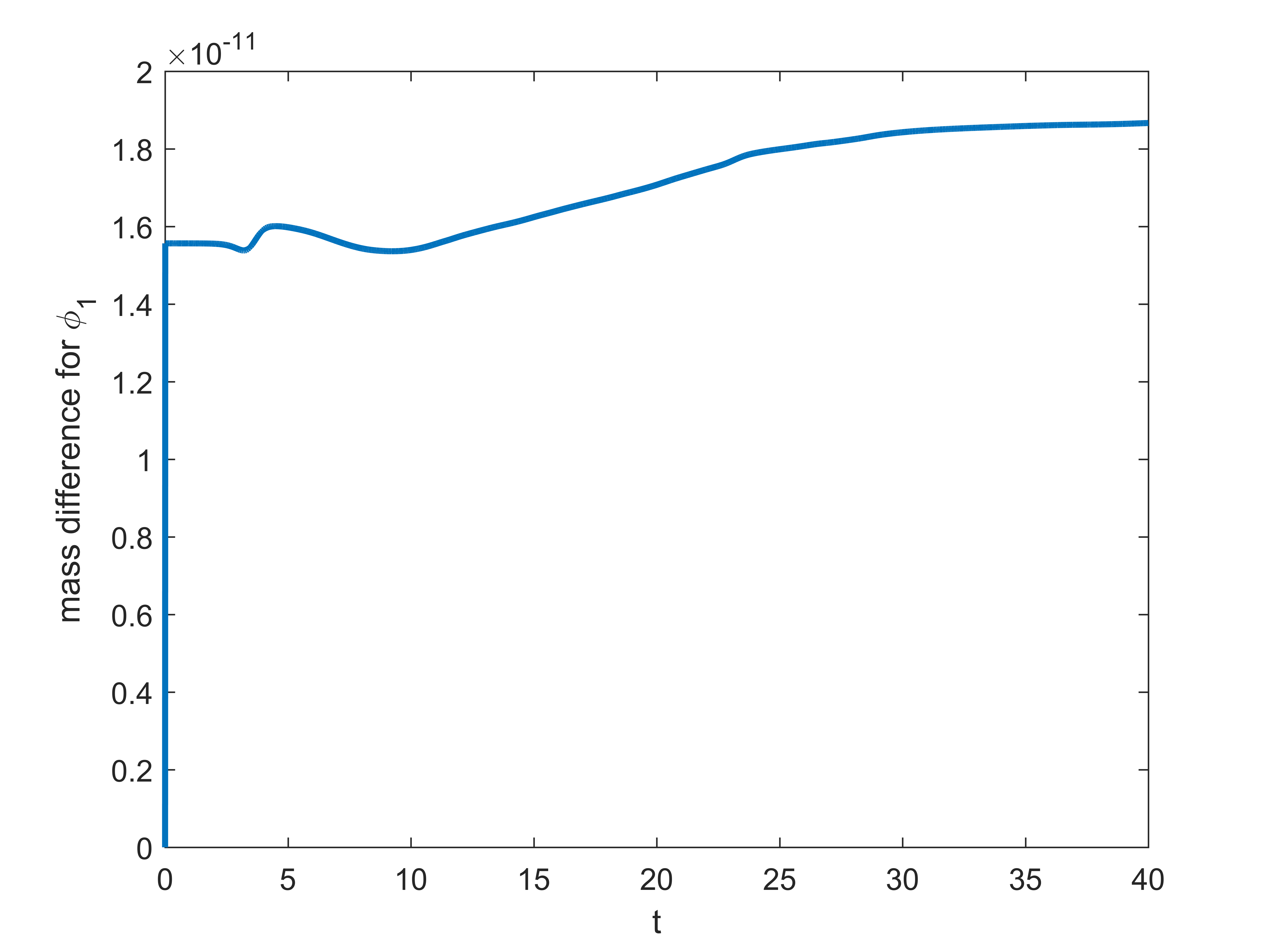}
		\end{subfigure}
		\begin{subfigure}{}
			\includegraphics[width=2.2in]{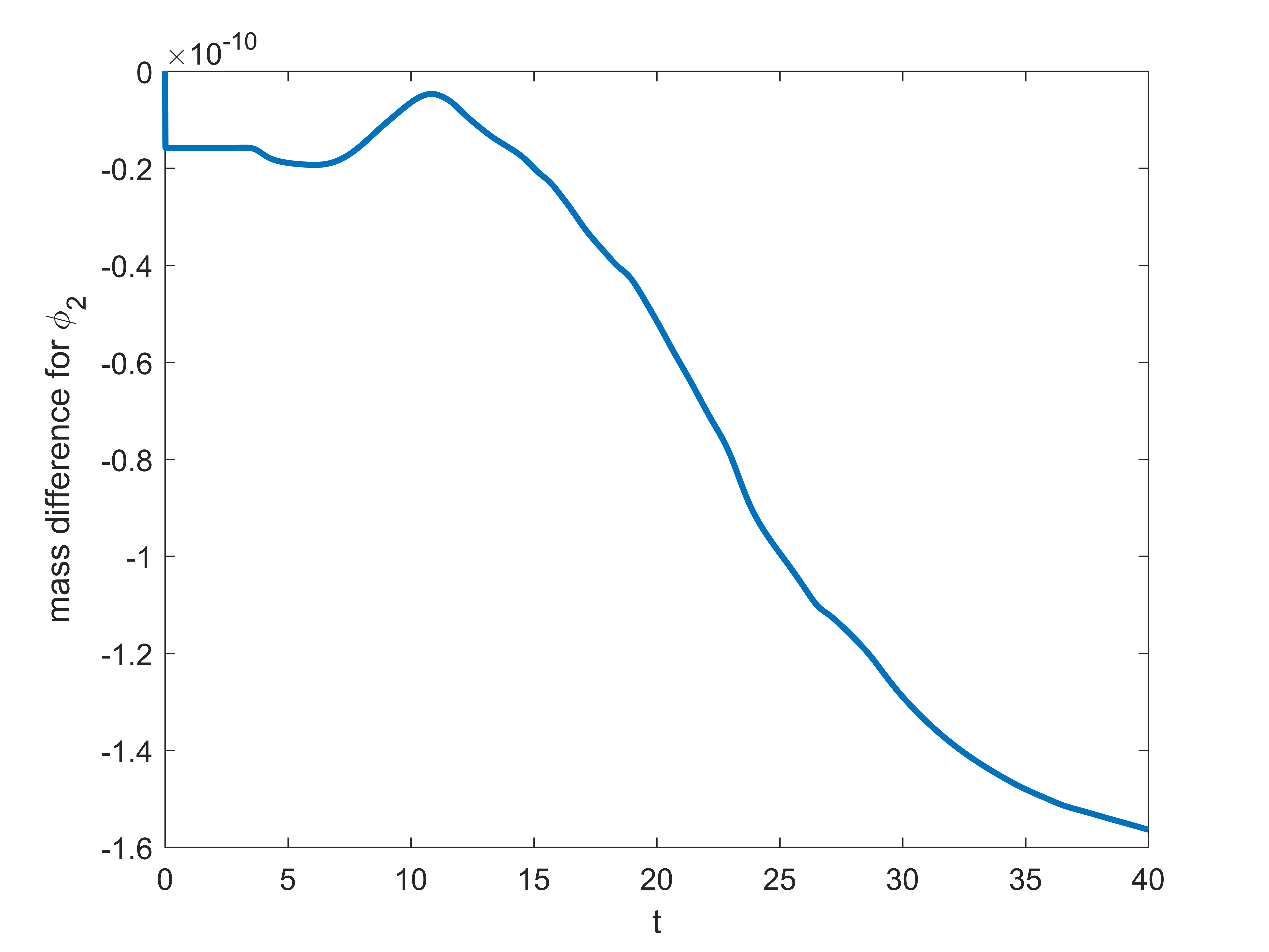}
		\end{subfigure}
	\end{center}
	\caption{Example \ref{example 2}: The error developments of the total mass for $\phi_1$ and $\phi_2$, respectively. }
	\label{fig:cosmass}
\end{figure}
\begin{figure}
	\centering
	\includegraphics[width=2.6cm,height=2.6cm]{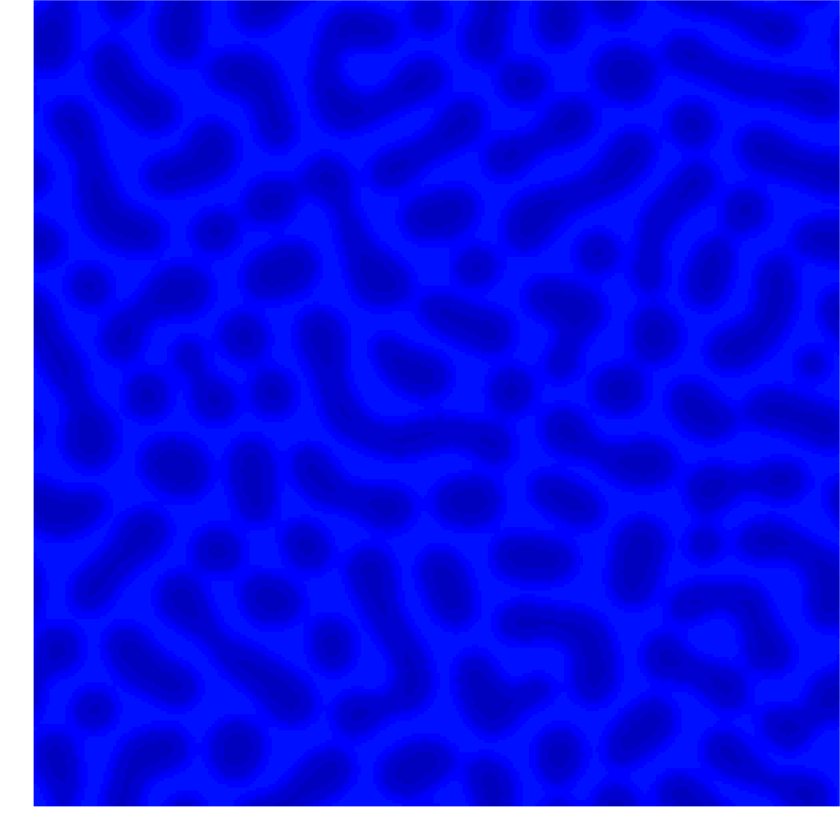}
	\includegraphics[width=2.6cm,height=2.6cm]{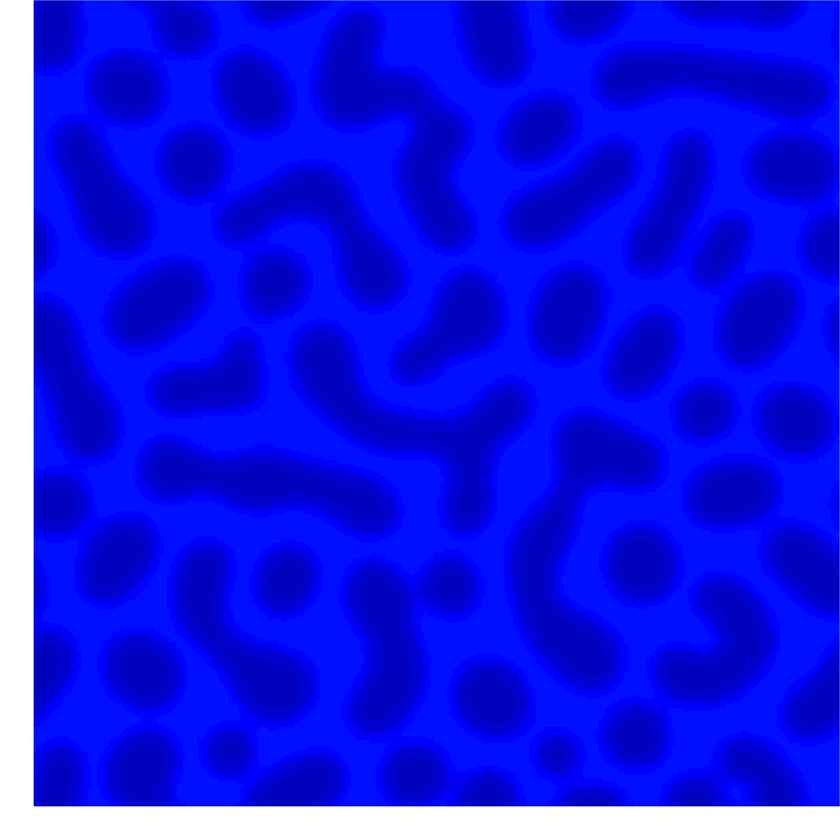}
	\includegraphics[width=2.6cm,height=2.6cm]{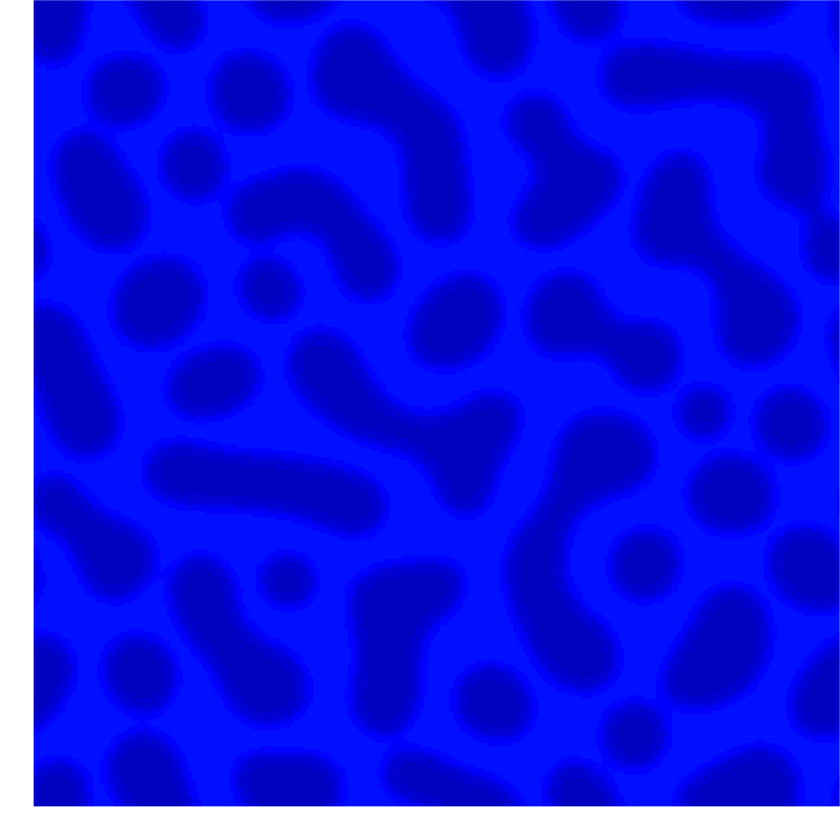}
	\includegraphics[width=2.6cm,height=2.6cm]{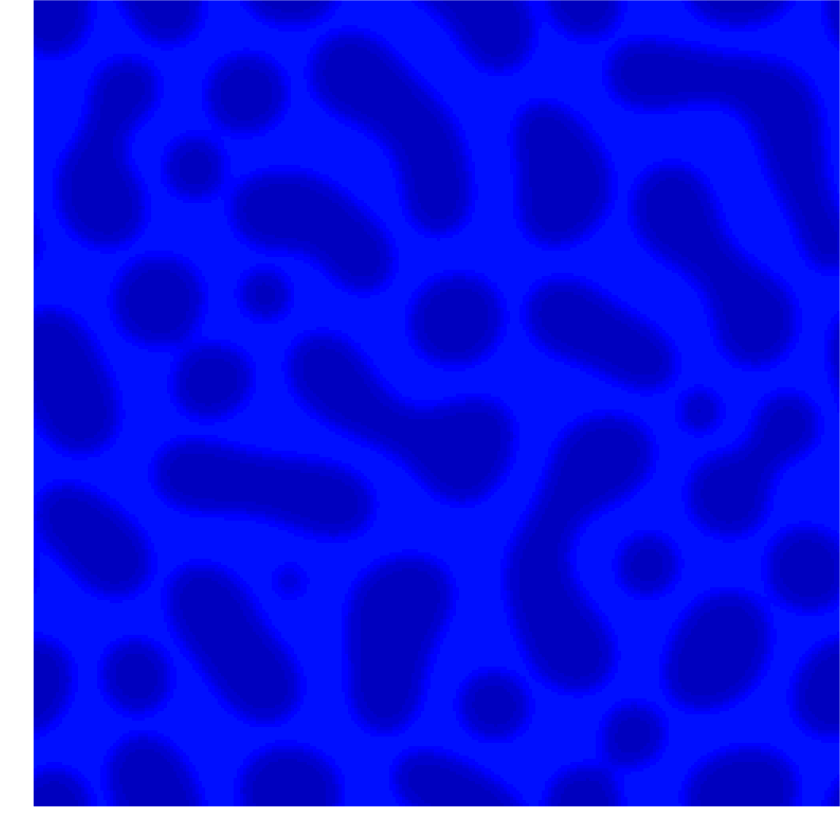}\\
	\includegraphics[width=2.6cm,height=2.6cm]{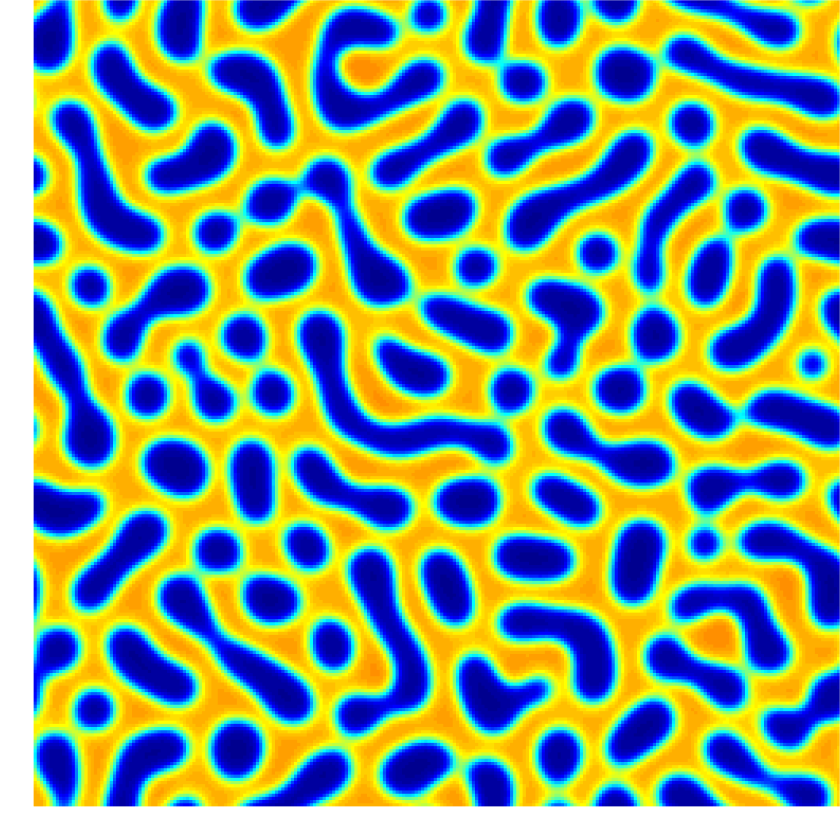}
	\includegraphics[width=2.6cm,height=2.6cm]{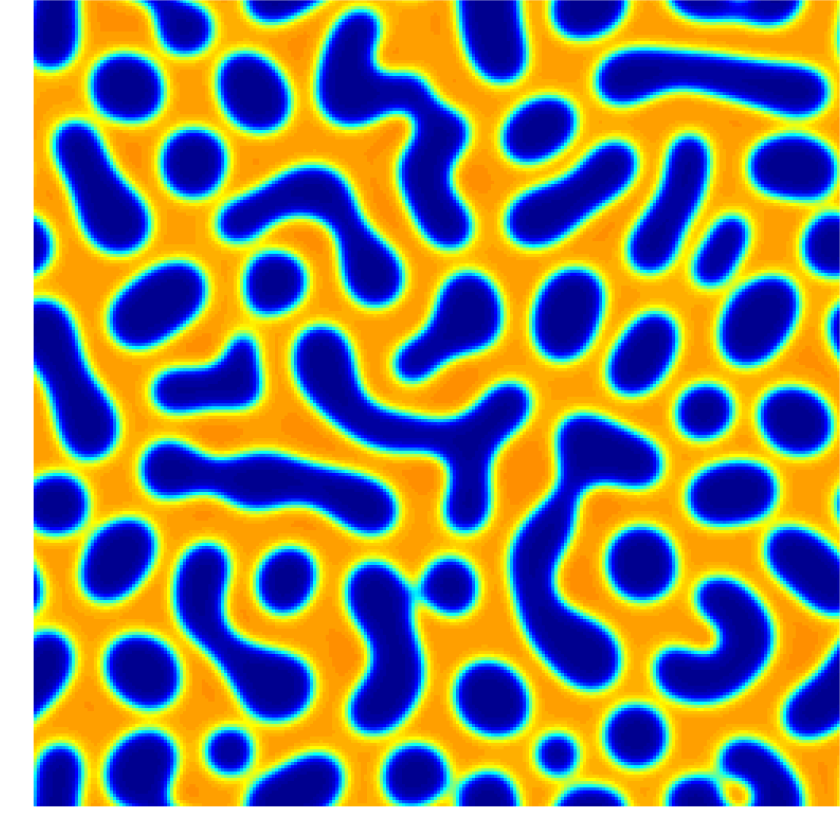}
	\includegraphics[width=2.6cm,height=2.6cm]{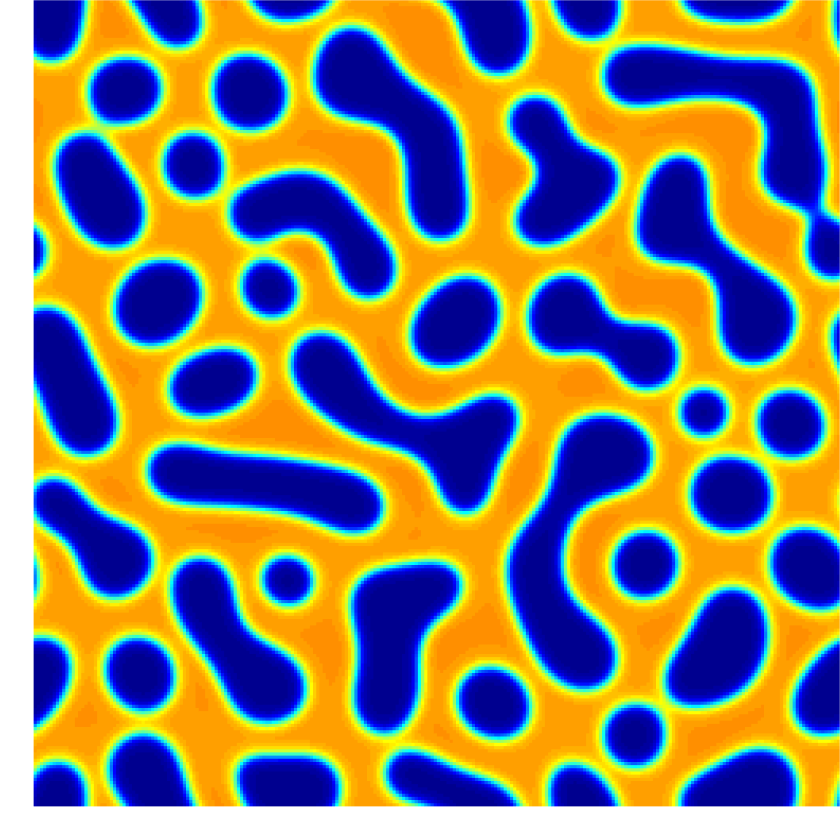}
	\includegraphics[width=2.6cm,height=2.6cm]{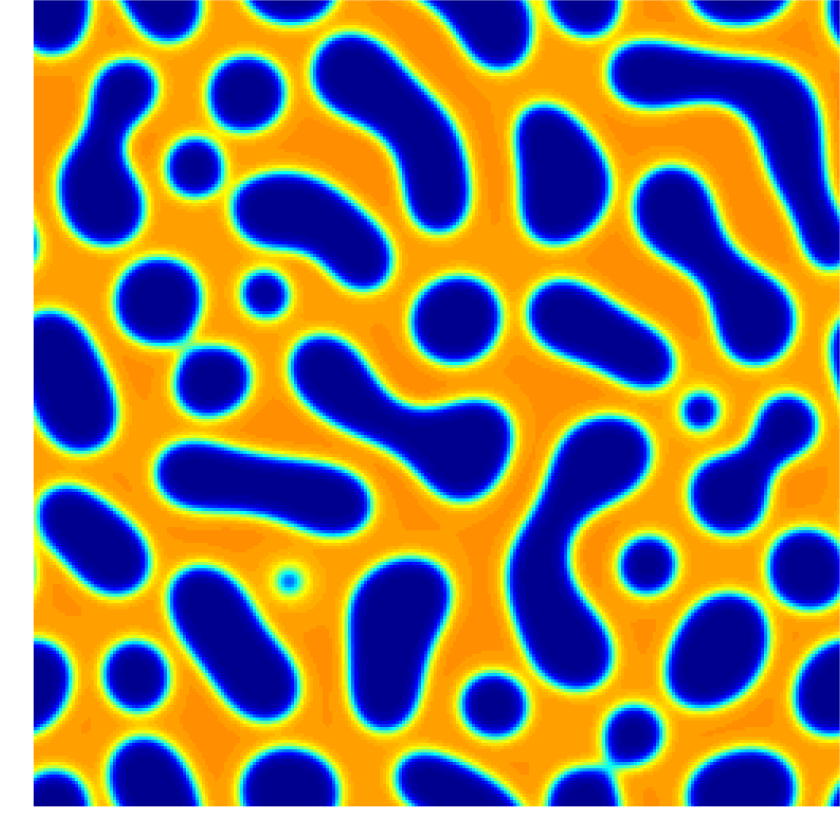}\\
	\includegraphics[width=2.6cm,height=2.6cm]{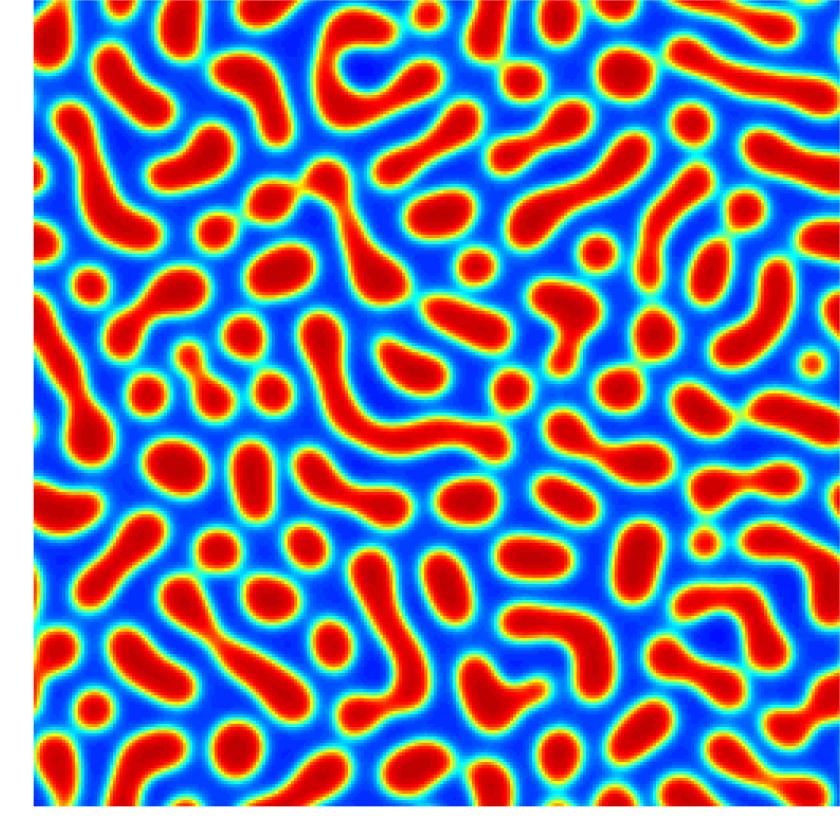}
	\includegraphics[width=2.6cm,height=2.6cm]{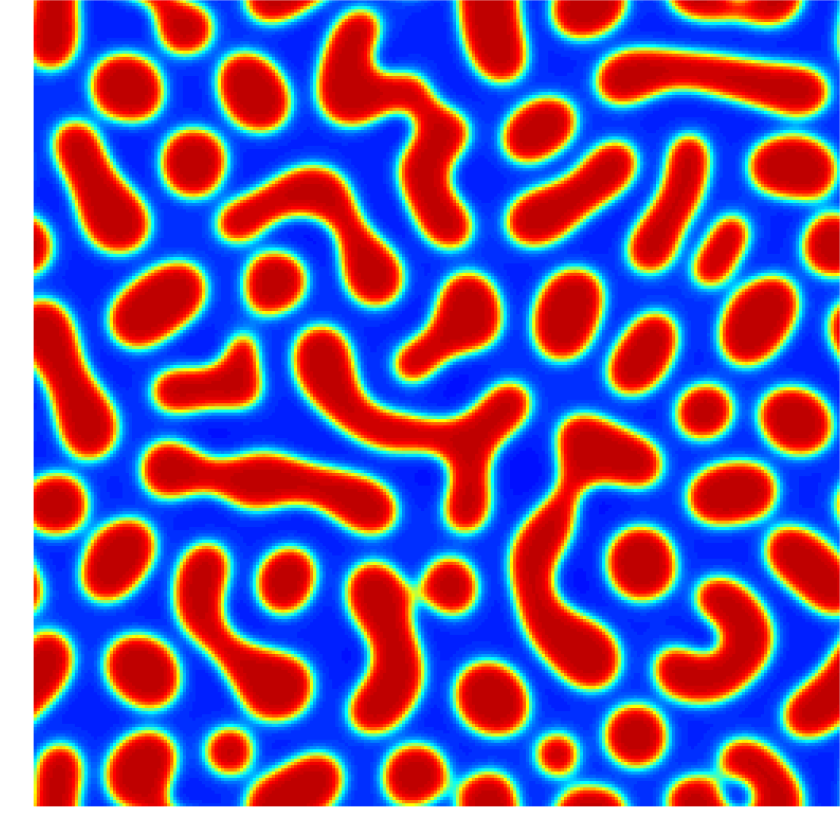}
	\includegraphics[width=2.6cm,height=2.6cm]{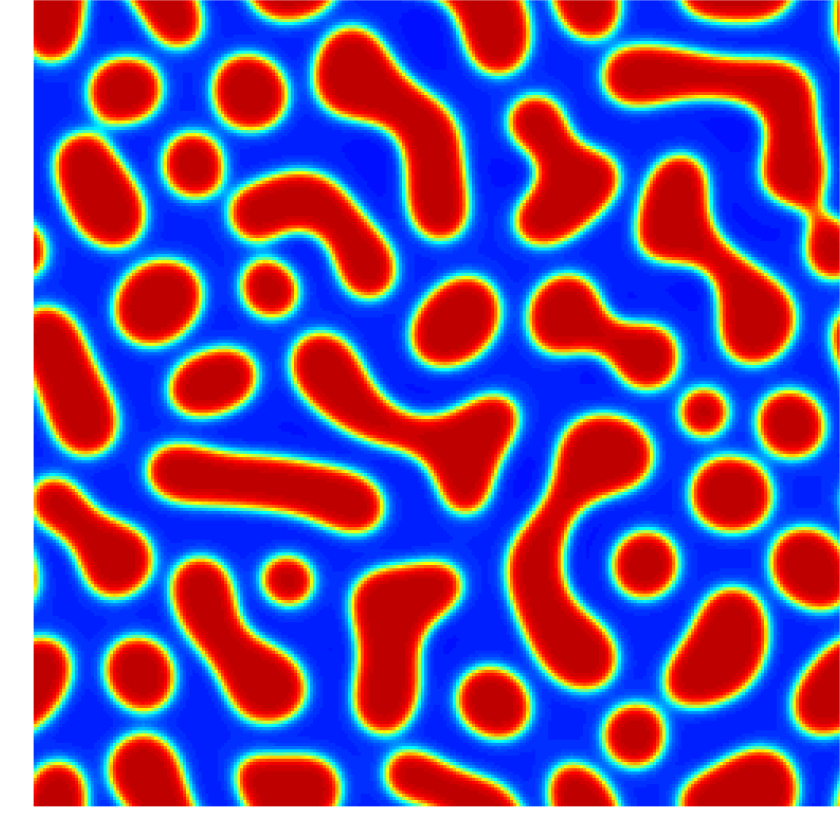}
	\includegraphics[width=2.6cm,height=2.6cm]{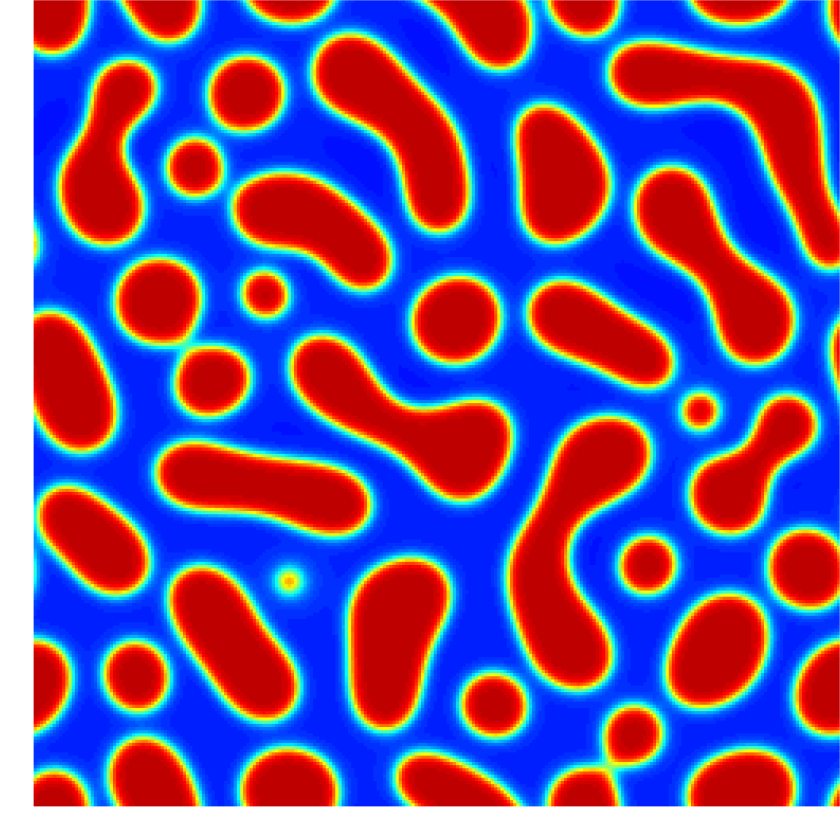}\\
    $t=10$\hspace{1.6cm}$t=20$\hspace{1.6cm}$t=30$\hspace{1.6cm}$t=40$
\caption{Example \ref{example 2}: Evolution of three phase variables at $t = 10, 20, 30$ and $40$. The first line is for $\phi_1$, the second line is for $\phi_2$ and the last line is for $\phi_3$. The time step size is taken as $\dt = 1.0 \times 10^{-4}$. }\label{fig:coslong}
\end{figure}
\begin{figure}[ht]
	\begin{center}
		\begin{subfigure}{}
			\includegraphics[width=2.2in]{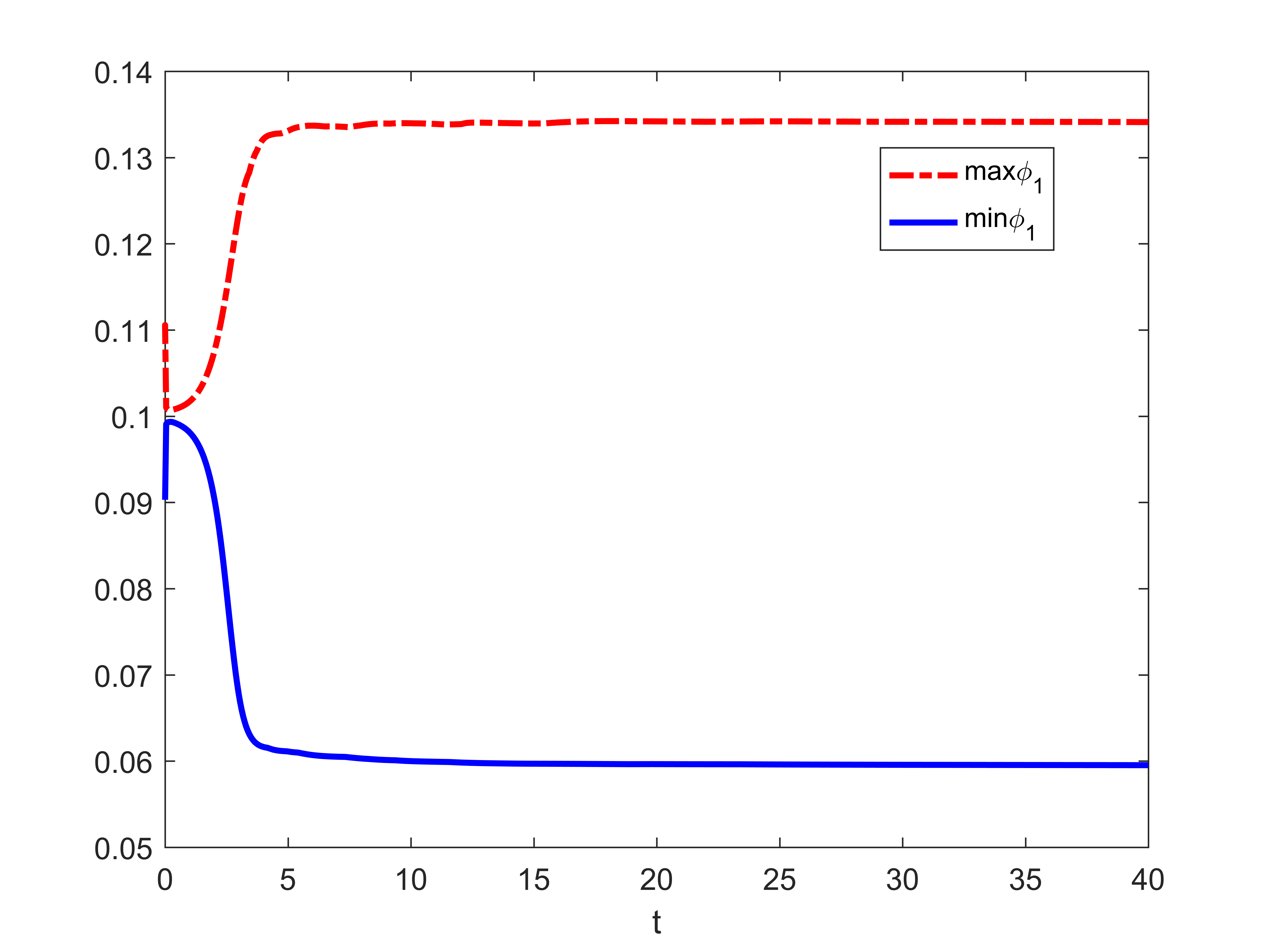}
		\end{subfigure}
		\begin{subfigure}{}
			\includegraphics[width=2.2in]{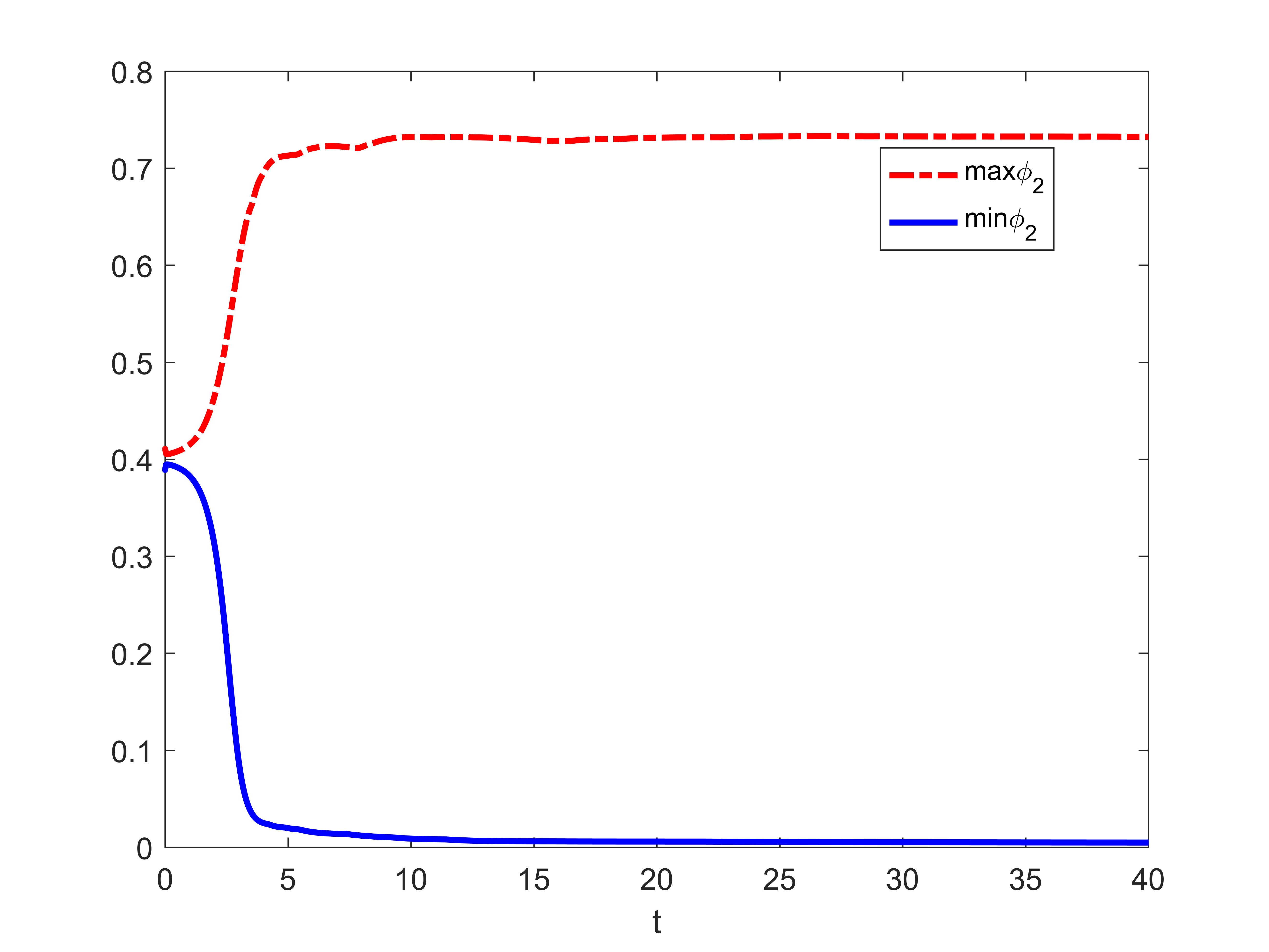}
		\end{subfigure}
	\end{center}
	\caption{Example \ref{example 2}: The time evolution of the maximum and minimum values for $\phi_1$ and $\phi_2$, respectively. }
	\label{fig:cosmaxmin}
\end{figure}
\begin{figure}[!htp]
	\begin{center}	
		\includegraphics[width=2.5in]{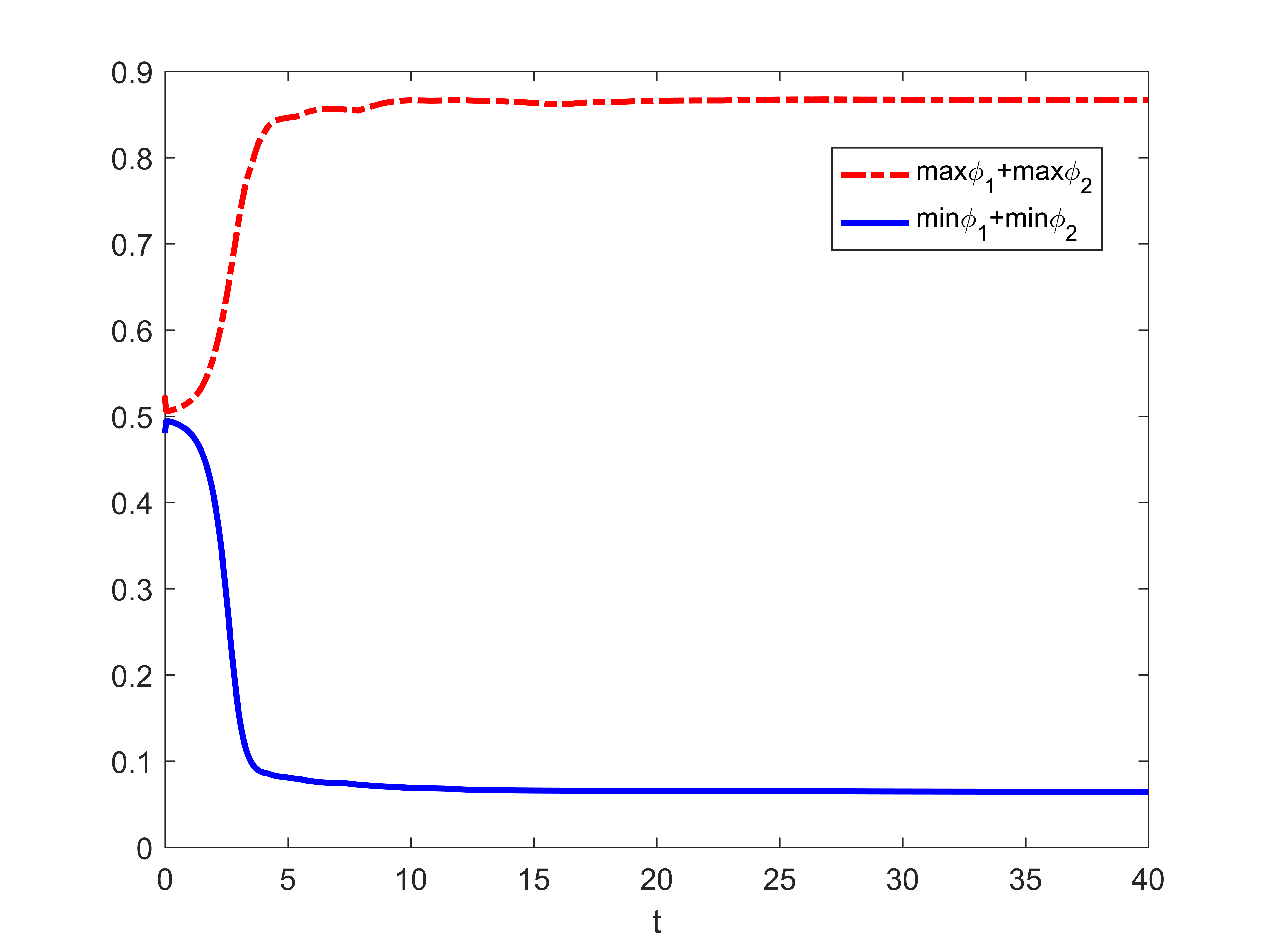}
		\caption{Example \ref{example 2}: The time evolutions of the maximum and minimum values for $\phi_1+\phi_2$, with $\dt = 1.0 \times 10^{-4}$.}\label{fig:coslong_maxminadd}
	\end{center}
\end{figure}
The energy evolution of the numerical solution with $\dt = 1.0\times 10^{-4}$ is illustrated in Figure \ref{fig:coslong_energy}, which indicates a clear energy decay. In Figure \ref{fig:cosmass}, we also present the error evolutions of the total mass of $\phi_1$ and $\phi_2$. In Figure \ref{fig:coslong}, the snapshot plots of $\phi_1$, $\phi_2$ and $\phi_3$ at a sequence of time instants are displayed, to make a comparison with the existing ternary MMC results. Moreover, the maximum values and minimum values of $\phi_1$, $\phi_2$ and $\phi_1+\phi_2$ are presented in Figure \ref{fig:cosmaxmin} and Figure \ref{fig:coslong_maxminadd}. In summary, our numerical tests further confirm that the proposed numerical scheme respects mass conservation, energy dissipation, and positivity at discrete level.
\section{Conclusions} \label{sec:conclusion}

A second order finite difference numerical scheme is proposed and analyzed for the ternary MMC system. The BDF temporal discrete and second-order Adams-Bashforth extrapolation formula has been used to construct the full discrete scheme.
In the proposed numerical algorithmic,  a unique solvability and positivity-preserving property turn to be available. Combined Douglas-Dupont regularization term, the energy stability property is estimated. Moreover, the second order convergence analysis are available in the theoretical level. To overcome a well-known difficulty associated with the highly nonlinear and singular nature of the surface diffusion coefficients, a rough error estimate has to be performed, so that the $\ell^{\infty}$ bound for $\phi_i$ could be derived. This $\ell^\infty$ estimate yields the upper and lower bounds of the three variables, and these bounds play a crucial role in the subsequent analysis. Finally, the refined error estimate is carried out to accomplish the desired convergence result. In addition, mass conservation, energy stability, bound of the numerical solution and the second order accurate are demonstrated in the numerical experiments. 

	\section*{Acknowledgements}

L.X.~Dong is partially supported by the National Natural Science Foundation of China (No. 12201051, 12371396). C.~Wang is partially supported by the National Science Foundation (No. DMS-2012269, DMS-2309548). Z.R.~Zhang is partially supported by the National Natural Science Foundation of China (No. 11871105, No. 12231003).  
\newpage
\bibliographystyle{plain}
\bibliography{draft2}

\begin{thebibliography}{10}

\bibitem{baskaran13a}
A.~Baskaran, Z.~Hu, J.~Lowengrub, C.~Wang, S.~Wise, and P.~Zhou.
\newblock Energy stable and efficient finite-difference nonlinear multigrid
  schemes for the modified phase field crystal equation.
\newblock {\em J. Comput. Phys.}, 250:270--292, 2013.

\bibitem{Boyer2006Numerical}
F.~Boyer and C.~Lapuerta.
\newblock Study of a three component {Cahn-Hilliard} flow model.
\newblock {\em M2AN Math. Model. Numer. Anal.}, 40:653--687, 2006.

\bibitem{Boyer2011Numerical}
F.~Boyer and S.~Minjeaud.
\newblock Numerical schemes for a three component {Cahn-Hilliard} model.
\newblock {\em M2AN Math. Model. Numer. Anal.}, 45:697--738, 2011.

\bibitem{chen16}
W.~Chen, Y.~Liu, C.~Wang, and S.~Wise.
\newblock An optimal-rate convergence analysis of a fully discrete finite
  difference scheme for {Cahn-Hilliard-Hele-Shaw} equation.
\newblock {\em Math. Comp.}, 85:2231--2257, 2016.

\bibitem{chen19b}
W.~Chen, C.~Wang, X.~Wang, and S.~Wise.
\newblock Positivity-preserving, energy stable numerical schemes for the
  {Cahn-Hilliard} equation with logarithmic potential.
\newblock {\em J. Comput. Phys. X}, 3:100031, 2019.

\bibitem{chenY18}
Y.~Chen, J.~Lowengrub, J.~Shen, C.~Wang, and S.~Wise.
\newblock Efficient energy stable schemes for isotropic and strongly
  anisotropic {Cahn-Hilliard} systems with the {Willmore} regularization.
\newblock {\em J. Comput. Phys.}, 365:57--73, 2018.

\bibitem{cheng18ch}
K.~Cheng, W.~Feng, C.~Wang, and S.~Wise.
\newblock An energy stable fourth order finite difference scheme for the
  {C}ahn-{H}illiard equation.
\newblock {\em J. Comput. Appl. Math.}, 362:574--595, 2019.

\bibitem{Christlieb2014High}
A.~Christlieb, J.~Jones, J.~Promislow, K.~Wetton, B.~Willoughby, and Mark.
\newblock High accuracy solutions to energy gradient flows from material
  science models.
\newblock {\em J. Comput. Phys.}, 257:193--215, 2014.

\bibitem{diegel15a}
A.~Diegel, X.~Feng, and S.~Wise.
\newblock Convergence analysis of an unconditionally stable method for a
  {Cahn-Hilliard-Stokes} system of equations.
\newblock {\em SIAM J. Numer. Anal.}, 53:127--152, 2015.

\bibitem{diegel17}
A.~Diegel, C.~Wang, X.~Wang, and S.~Wise.
\newblock Convergence analysis and error estimates for a second order accurate
  finite element method for the {Cahn-Hilliard-Navier-Stokes} system.
\newblock {\em Numer. Math.}, 137:495--534, 2017.

\bibitem{Dong2018Convergence}
L.~Dong, W.~Feng, C.~Wang, S.~Wise, and Z.~Zhang.
\newblock Convergence analysis and numerical implementation of a second order
  numerical scheme for the three-dimensional phase field crystal equation.
\newblock {\em Comput. Math. Appl.}, 75:1912--1928, 2018.

\bibitem{Dong2020b}
L.~Dong, C.~Wang, S.~Wise, and Z.~Zhang.
\newblock A positivity-preserving, energy stable scheme for a ternary
  {Cahn-Hilliard} system with the singular interfacial parameters.
\newblock {\em J. Comput. Phys.}, 442:110451, 2021.

\bibitem{dong2022}
L.~Dong, C.~Wang, S.~Wise, and Z.~Zhang.
\newblock Optimal rate convergence analysis of a numerical scheme for the
  ternary {Cahn-Hilliard} system with a {Flory-Huggins-deGennes} energy
  potential.
\newblock {\em J. Comput. Appl. Math.}, 415:114474, 2022.

\bibitem{Dong2019a}
L.~Dong, C.~Wang, H.~Zhang, and Z.~Zhang.
\newblock A positivity-preserving, energy stable and convergent numerical
  scheme for the {Cahn-Hilliard} equation with a {Flory-Huggins-deGennes}
  energy.
\newblock {\em Commun. Math. Sci.}, 17:921--939, 2019.

\bibitem{Dong2020a}
L.~Dong, C.~Wang, H.~Zhang, and Z.~Zhang.
\newblock A positivity-preserving second-order {BDF} scheme for the
  {Cahn-Hilliard} equation with variable interfacial parameters.
\newblock {\em Commun. Comput. Phys.}, 28(3):967--998, 2020.

\bibitem{Drury2003}
J.~Drury and D.~Mooney.
\newblock Hydrogels for tissue engineering: scaffold design variables and
  applications.
\newblock {\em Biomaterials}, 24(24):4337--4351, 2003.

\bibitem{Edlund2010}
U.~Edlund, Ryberg, Y.~Zhu, and A.~Albertsson.
\newblock Barrier films from renewable forestry waste.
\newblock {\em Biomacromolecules}, 11(9):2532--2538, 2010.

\bibitem{feng2018bsam}
W.~Feng, Z.~Guo, J.~Lowengrub, and S.~Wise.
\newblock A mass-conservative adaptive {FAS} multigrid solver for cell-centered
  finite difference methods on block-structured,locally-cartesian grids.
\newblock {\em J. Comput. Phys.}, 352:463--497, 2018.

\bibitem{feng2016preconditioned}
W.~Feng, A.~Salgado, C.~Wang, and S.~Wise.
\newblock Preconditioned steepest descent methods for some nonlinear elliptic
  equations involving p-laplacian terms.
\newblock {\em J. Comput. Phys.}, 334:45–67, 2016.

\bibitem{Feng2017A}
W.~Feng, C.~Wang, S.~Wise, and Z.~Zhang.
\newblock A second-order energy stable backward differentiation formula method
  for the epitaxial thin film equation with slope selection.
\newblock {\em Numer. Meth. Part. D. E.}, 34:1975--2007, 2018.

\bibitem{han15}
D.~Han and X.~Wang.
\newblock A second order in time, uniquely solvable, unconditionally stable
  numerical scheme for {Cahn-Hilliard-Navier-Stokes} equation.
\newblock {\em J. Comput. Phys.}, 290:139--156, 2015.

\bibitem{hu09}
Z.~Hu, S.~Wise, C.~Wang, and J.~Lowengrub.
\newblock Stable and efficient finite-difference nonlinear-multigrid schemes
  for the phase-field crystal equation.
\newblock {\em J. Comput. Phys.}, 228:5323--5339, 2009.

\bibitem{Huang2007}
T.~Huang, H.~Xu, K.~Jiao, L.~Zhu, H.~Brown, and H.~Wang.
\newblock A novel hydrogel with high mechanical strength: A macromolecular
  microsphere composite hydrogel.
\newblock {\em Advanced Materials}, 19(12):1622--1626, 2007.

\bibitem{Ji2021}
G.~Ji, Y.~Yang, and H.~Zhang.
\newblock Modeling and simulation of a ternary system for macromolecular
  microsphere composite hydrogels.
\newblock {\em East Asian J. Appl. Math.}, 11(1):93--118, 2021.

\bibitem{Johnson2010}
J.~Johnson, N.~Turro, J.~Koberstein, and J.~Mark.
\newblock Some hydrogels having novel molecular structures.
\newblock {\em Progress in Polymer Science}, 35(3):332--337, 2010.

\bibitem{Lixiao2015}
X.~Li, G.~Ji, and H.~Zhang.
\newblock Phase transitions of macromolecular microsphere composite hydrogels
  based on the stochastic {Cahn-Hilliard} equation.
\newblock {\em J. Comput. Phys.}, 283:81--97, 2015.

\bibitem{Li2016An}
X.~Li, Z.~Qiao, and H.~Zhang.
\newblock An unconditionally energy stable finite difference scheme for a
  stochastic {Cahn-Hilliard} equation.
\newblock {\em Sci. China. Math.}, 59:1815--1834, 2016.

\bibitem{liuY17}
Y.~Liu, W.~Chen, C.~Wang, and S.~Wise.
\newblock Error analysis of a mixed finite element method for a
  {Cahn-Hilliard-Hele-Shaw} system.
\newblock {\em Numer. Math.}, 135:679--709, 2017.

\bibitem{wang11a}
C.~Wang and S.~Wise.
\newblock An energy stable and convergent finite-difference scheme for the
  modified phase field crystal equation.
\newblock {\em SIAM J. Numer. Anal.}, 49:945--969, 2011.

\bibitem{wise10}
S.~Wise.
\newblock Unconditionally stable finite difference, nonlinear multigrid
  simulation of the {Cahn-Hilliard-Hele-Shaw} system of equations.
\newblock {\em J. Sci. Comput.}, 44:38--68, 2010.

\bibitem{wise09a}
S.~Wise, C.~Wang, and J.~Lowengrub.
\newblock An energy stable and convergent finite-difference scheme for the
  phase field crystal equation.
\newblock {\em SIAM J. Numer. Anal.}, 47:2269--2288, 2009.

\bibitem{CHBDF2}
Y.~Yan, W.~Chen, C.~Wang, and S.~Wise.
\newblock A second-order energy stable {BDF} numerical scheme for the
  {Cahn-Hilliard} equation.
\newblock {\em Commun. Comput. Phys.}, 23:572--602, 2018.

\bibitem{Yang2017Numerical}
X.~Yang, J.~Zhao, Q.~Wang, and J.~Shen.
\newblock Numerical approximations for a three-components {Cahn–Hilliard}
  phase-field model based on the invariant energy quadratization method.
\newblock {\em Math. Models Methods Appl. Sci.}, 22:1--38, 2017.

\bibitem{Yuan2021a}
M.~Yuan, W.~Chen, C.~Wang, S.~Wise, and Z.~Zhang.
\newblock An energy stable finite element scheme for the three-component
  {Cahn-Hilliard-type} model for macromolecular microsphere composite
  hydrogels.
\newblock {\em J. Sci. Comput.}, 87(3), 2021.

\bibitem{Yuan2022}
M.~Yuan, W.~Chen, C.~Wang, S.~Wise, and Z.~Zhang.
\newblock A second order accurate in time, energy stable finite element scheme
  for the {F}lory-{H}uggins-{C}ahn-{H}illiard equation.
\newblock {\em Adv. Appl. Math. Mech.}, 14(6):1477--1508, 2022.

\bibitem{Zhai2012Investigation}
D.~Zhai and H.~Zhang.
\newblock Investigation on the application of the {TDGL} equation in
  macromolecular microsphere composite hydrogel.
\newblock {\em Soft Matter}, 9:820--825, 2012.

\end{thebibliography}

\end{document}